\newtheorem{theorem}{Theorem}[section]
\newtheorem{proposition}[theorem]{Proposition}
\newtheorem{corollary}[theorem]{Corollary}
\newtheorem{lemma}[theorem]{Lemma}
\theoremstyle{definition}
\newtheorem{remark}[theorem]{Remark}
\newtheorem{conjecture/question}[theorem]{Conjecture/Question}
\newtheorem{remark/definition}[theorem]{Remark/Definition}
\newtheorem{terminology/notation}[theorem]{Terminology/Notation}
\newtheorem{assumption}[theorem]{Assumption}
\def\Aut{{\rm Aut}}
\def\Pic{{\rm Pic}}
\def\Ker{{\rm Ker}}
\title[Rationality of moduli of higher spin curves in low genus]{On the rationality of the moduli of higher spin curves in low genus}
 \author{Letizia Pernigotti}  
 \address{Universit\`a di Trento, Dipartimento di Matematica, Via Sommarive 14, 38123 Povo\hfill  
  \indent Trento, Italy}
 \email{{ pernigotti@science.unitn.it}}
  \author{Alessandro Verra}
  \address{Universit\'a Roma Tre, Dipartimento di Matematica, Largo San Leonardo Murialdo \hfill
\indent 1-00146 Roma, Italy}
 \email{{ verra@mat.uniroma3.it}}
\thanks {Supported by PRIN Project 2010-11 'Geometria delle variet\'a algebriche'  of MIUR and by GNSAGA group of INdAM.}
\begin{document}

 \maketitle
 
\vspace{0,4cm}
\begin{center}\parbox{0.8\textwidth}{
\begin{small}
ABSTRACT: The global geometry of the moduli spaces of higher spin curves and their birational classification is largely unknown for $g \geq 2$ and $r >2$. 
Using quite related geometric constructions, we almost complete the picture of the known results in genus $g\leq 4$ showing the rationality of the moduli spaces
of even and odd $4$-spin curves of genus $3$, of odd spin curves of genus $4$ and of $3$-spin curves of genus $4$.

\vspace{0,4cm}

\textsc{Key words}: Rationality, Higher spin curves, Higher theta-characteristics, Low genus.   

\vspace{0,4cm}

\textsc{Mathematics Subject Classification (2010)}: 14H10, 14H45, 14E05, 14E08.
\end{small}
}\end{center}

 
\par \noindent
\section{Introduction}

Let $C$ be a smooth, irreducible complex projective curve of genus $g$, a theta characteristic  on $C$ is a square root $\eta$
of the canonical sheaf $\omega_C$.  By definition a pair $(C, \eta)$ is a spin curve. It is said to be even or odd according to the parity of $h^0(\eta)$.  Starting
from Cornalba's paper \cite{cornalba1989moduli}, the moduli space $\mathcal S_g$ of spin curves of genus $g$ and its compactifications became object of systematic investigations. As is well known
$\mathcal S_g$ is split in two irreducible connected components $\mathcal S^+_g$ and $\mathcal S^-_g$. They respectively correspond to 
moduli of even and odd spin curves. The Kodaira dimension of $\mathcal S^{\pm}_g$ is completely known, as well as several facts about rationality or unirationality in low genus. 
The picture is as follows for even or odd spin curves:
\begin{itemize} \it
\item[$\circ$] $\mathcal S^+_g$ is uniruled for $g \leq 7$,
\item[$\circ$] $\mathcal S^+_8$ has Kodaira dimension zero,
\item[$\circ$] $\mathcal S^+_g$ is of general type for $g \geq 9$.
\item[$\circ$] $\mathcal S^-_g$ is uniruled for $g \leq 11$,
\item[$\circ$]  $\mathcal S^-_g$ is of general type for $g \geq 12$.
\end{itemize} 
Moreover the unirationality of $\mathcal S^-_g$ and $\mathcal S^+_g$ has been proved respectively  
for $g \leq 8$ and $g \leq 6$. Concerning the rationality problem, $\mathcal S^{\pm}_g$ is classically
known to be rational for $g \leq 3$, while the rationality of $\mathcal S^{+}_4$ is a recent result. For more details on the above picture see \cite{farkas2010birational}, \cite{farkas2010geometry}, 
\cite{farkas2012moduli},\cite{takagi2009moduli}, \cite{verra2013rational}. \par
Higher spin curves generalize spin curves. By definition a higher spin curve of genus $g$ and order $r$ is a pair $(C, \eta)$ such that $\eta^{\otimes r} \cong \omega_C$. We will also say that
$(C, \eta)$ is an \it $r$-spin curve \rm of genus $g$. The moduli 
spaces of these pairs are denoted by $\mathcal S^{1/r}_g$. They  were constructed by Jarvis in \cite{jarvis1998torsion} and then studied by several 
authors, see for instance \cite{caporaso2007moduli},  \cite{chiodo2008stable},\cite{jarvis2001picard}.  \par
Concerning the irreducibility of these spaces, it is useful to recall since now how they behave: $\mathcal S^{1/r}_g$ is irreducible if $r$ is odd and $g \geq 2$, while $\mathcal S^{1 / r}_g$ is split in two irreducible connected components if $r$ is even and $g \geq 2$, \cite{jarvis2000geometry}. They are distinguished by the condition that $\eta^{\otimes \frac r2}$ is an even or odd theta characteristic.  
However, with the exception of the case of genus 1, the global geometry of $\mathcal S^{1 / r}_g$  appears to be  largely unknown for $r >2$.  \par From another side a natural, elementary, remark is that for every curve $C$ the canonical sheaf $\omega_C$ not only admits square roots, but the roots of order  $2g-2$ and $g-1$ as well. Restricting to $g-1$ roots, they form configurations of line bundles of degree two which are worth of being studied.  \par For $r = g-1$ the forgetful map
$
f: \mathcal S^{1/(g-1)} \to \mathcal M_g
$
has degree $(g-1)^{2g}$. Since this grows up very fast, it is seems natural to expect that $\mathcal S^{1 / (g-1)}_g$ becomes of general type after very few exceptions. About this, assume that 
$g-1$ is even so that $\eta^{\otimes (g-1)/2}$ is a theta characteristic. Then every irreducible component of $\mathcal S^{1/(g-1)}_g$ dominates $\mathcal S^+_g$ or $\mathcal S^-_g$ if $g$ is odd, via the assignement $(C, \eta) \to (C, \eta^{\otimes (g-1)/2})$. \par  Therefore, in  view of the picture on moduli of spin curves, there exist irreducible components of  $\mathcal S^{1/(g-1)}_g$ of non negative Kodaira dimension as soon as $g \geq 8$.   In this frame the first unknown case of low genus to be considered is the genus 4 case. Somehow surprisingly this is still an exception. We prove in this note that
 \begin{theorem} The moduli space of 3-spin curves of genus 4 is rational. \end{theorem}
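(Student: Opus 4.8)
The plan is to realize a general 3-spin curve of genus $4$ via an explicit parametrization in projective space and then exhibit the moduli space as a quotient that is visibly rational. For genus $4$, a general curve $C$ is a complete intersection of a quadric and a cubic in $\mathbb P^3$, with its canonical embedding given by $|\omega_C|$. A $3$-spin structure is a line bundle $\eta$ of degree $2$ with $\eta^{\otimes 3}\cong\omega_C$. The first step is to find a geometric incarnation of the pair $(C,\eta)$: since $\deg\eta=2$ and $g=4$, the linear system $|\eta|$ is generically a $g^1_2$ on $C$ only when $C$ is hyperelliptic, so for general $C$ I expect $h^0(\eta)=0$ or $1$ and I would instead work with the embedding by a related complete system (for instance $|\omega_C\otimes\eta^{-1}|=|\eta^{\otimes 2}|$, a degree $4$ bundle) to produce a concrete projective model carrying the data of $\eta$.

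The main strategy I would pursue is to construct a \emph{universal family} over a rational parameter space. Concretely, I would look for a vector bundle or a determinantal/Pfaffian description in which the cube-root condition $\eta^{\otimes 3}\cong\omega_C$ becomes a linear-algebra condition on matrices of forms in $\mathbb P^3$ or $\mathbb P^2$. One promising route is to use a resolution of $\eta$ (or of $\OO_C\oplus\eta\oplus\eta^{\otimes 2}$, the sheaf of algebras defining the cyclic triple cover attached to the $3$-torsion-like data) and to encode $C$ together with $\eta$ as the zero locus of a section of a bundle on a rational variety. Having such a description, the parameter space of all $(C,\eta)$ becomes an open dense subset of a projective bundle or an affine space, hence rational, and I would then identify $\mathcal S^{1/3}_4$ as the quotient of this rational parameter space by a \emph{connected} linear group (a product of $\mathrm{GL}$ or $\mathrm{PGL}$ factors acting by change of basis).

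The crucial input is that the quotient of a rational variety by a connected linear algebraic group acting (almost) freely with a linear or almost-linear action is again rational; here one typically invokes the no-name lemma to strip off the group action on a vector-bundle structure and reduce to rationality of the base. So the sequence of steps is: (1) fix the concrete projective model of $(C,\eta)$ and verify it captures the generic member of $\mathcal S^{1/3}_4$; (2) show the set of such models forms a rational (indeed, Zariski-locally trivial affine- or vector-bundle) variety $U$ over a rational base; (3) identify the natural group $G$ (connected linear) whose orbits are the isomorphism classes, realizing $\mathcal S^{1/3}_4$ birationally as $U/G$; (4) apply the no-name lemma repeatedly to conclude $U/G$ is rational.

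The hard part will be step (1) together with the linearization needed in step (4): finding a birational model of the universal $3$-spin curve in which the cube-root condition is transparent, and arranging the group action so that it is (generically) free with vector-space fibers, so that the no-name lemma applies cleanly rather than merely giving a bundle over a possibly-nonrational base. The parity/connectedness subtleties that split $\mathcal S^{1/r}_g$ for even $r$ do not arise here since $r=3$ is odd and $\mathcal S^{1/3}_4$ is irreducible, which simplifies the argument; the genuine obstacle is purely the explicit geometry of cube roots of the canonical bundle on the $(2,3)$ complete intersection and the careful bookkeeping of the residual automorphisms.
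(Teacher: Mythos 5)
Your proposal is a strategy outline rather than a proof, and the central idea that actually makes this case work is missing. The paper's engine is a specific geometric observation that your sketch never reaches: for a general $(C,\eta)$ one has $h^0(\eta)=0$, hence $h^0(\eta^{\otimes 2})=1$ by Riemann--Roch, so there is a \emph{unique} effective divisor $t\in\vert\eta^{\otimes 2}\vert$; on the canonical model $C=Q\cap F\subset\mathbf P^3$ the bicanonical divisor $3t$ is then a complete intersection $3t=Q\cdot S\cdot F$ with $S$ a quadric, which produces a smooth elliptic quartic $E=Q\cap S$ carrying the nontrivial $3$-torsion bundle $\epsilon$ with $\epsilon(1)\cong\mathcal O_E(t)$. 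This converts the cube-root datum into the triple $(E,\epsilon,H)$, a point of the moduli space $\mathcal P_{4,1}$ over $\mathcal R_{1,3}$, which is a rational surface. Note in particular that your concrete suggestion --- to embed $C$ by $\vert\omega_C\otimes\eta^{-1}\vert=\vert\eta^{\otimes 2}\vert$ --- cannot work as stated: that linear system is a single point, not an embedding; its unique member $t$ is the key player, but it is used as a divisor inside the canonical model, not as a projective model of $C$.

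The second gap is structural. You propose to realize $\mathcal S^{1/3}_4$ as $U/G$ for a connected linear group $G$ and to invoke the no-name lemma, and you correctly flag the linearization of the $G$-action as the hard part --- but you offer no way around it, and for the natural choice ($\mathrm{PGL}_4$ or $\Aut Q$ acting on spaces of forms) the almost-free linearized action you need is precisely what is not available off the shelf. The paper avoids this entirely: the quotient is taken once and for all at the level of elliptic curves, where rationality of $\mathcal P_{4,1}$ is easy, and everything above it is a tower of honest projective bundles $\mathbb P_c\to\mathbb P_b\to\mathbb P_a\to\mathcal P_{4,1}$ (quadrics through $E$, divisors in $\vert\epsilon\otimes H\vert$, cubic sections through $3t$), built from Poincar\'e bundles and Grauert's theorem. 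The map $\phi:\mathbb P_c\to\mathcal S^{1/3}_4$, $(C,t,Q,[E,\epsilon,H])\mapsto[C,\omega_C(-t)]$, is then \emph{birational} --- not merely dominant --- because every ingredient is uniquely recoverable from $[C,\eta]$: $t$ as the unique member of $\vert\eta^{\otimes 2}\vert$, $E$ as the base locus of the pencil $\vert\mathcal I_{2t}(2)\vert$, and $Q$ as the unique quadric through $C$. So no group quotient, and hence no no-name lemma, is needed at all; your plan, as written, stalls exactly at the step you yourself identify as unresolved.
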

Let $(C, \eta)$ be a general $3$-spin curve of genus 4. The starting point for proving the theorem is the remark that giving $(C, \eta)$ is equivalent to give the unique effective
divisor $t \in \vert \eta^{\otimes 2} \vert$. Furthermore, let $C$ be canonically embedded in $\mathbf P^3$, then $3t$ is the complete intersection of two quadrics and a cubic surface.
We show that the GIT-quotient $\mathcal Q$ of the family of these complete intersections is rational and that there is a natural birational map between $\mathcal Q$ and $\mathcal S^{1/3}_4$.
\par
Adding up this result to the known picture we obtain the list of cases of genus $g \leq 4$ where the rationality of $\mathcal S^{1 / r}_g$ is proven. Here is the complementary list of unknown cases for $g \leq 4$:
\begin{itemize} \it
\item[$\circ$] Moduli of $4$-spin curves of genus $3$.
\item[$\circ$] Moduli of odd spin curves of genus $4$.
\item[$\circ$] Moduli of $6$-spin curves of genus $4$.
\end{itemize}
In particular it seems that the case of odd spin curves of genus 4 was not considered in the literature. Notice also that $\mathcal S^{1 / (2g-2)}_g$ splits into the union of two components: the moduli of pairs
$(C, \eta)$ such that $\eta^{\otimes g-1}$ is an even theta characteristic and the complementary component. We will denote them respectively by $$ \mathcal S^{1 / (2g-2) +}_g \ , \ \mathcal S^{1 / (2g-2) -}_g. $$
We will say that $(C, \eta)$ is an even (odd) $r$-spin curve if $\eta^{\otimes r}$ is an even (odd) theta characteristic. 
In the final part of this paper we almost complete the picture of the known results  in genus $g \leq 4$.  Building on quite related geometric constructions and methods, we prove the following theorems.
\begin{theorem} The moduli space of odd spin curves of genus 4 is rational. \end{theorem}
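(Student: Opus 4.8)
The plan is to mirror the projective-geometric construction behind Theorem 1, using the classical correspondence between odd theta characteristics and \emph{tritangent planes}. Let $(C,\eta)$ be a general odd spin curve of genus $4$, so $h^0(\eta)=1$ and $|\eta|=\{D\}$ with $\deg D=3$ and $2D\in|\omega_C|$. Embedding $C$ canonically in $\mathbf P^3$ as the complete intersection of its unique quadric $Q$ and a cubic $F$, the relation $2D\in|\omega_C|$ says that the plane $H=\langle D\rangle$ cuts on $C$ the divisor $2D$; equivalently $H$ is tritangent to $C$, and $q=Q|_H$, $f=F|_H$ are a conic and a cubic tangent at the three points of $D$. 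Conversely $(Q,F,H)$, with $F$ read modulo $(Q)$, reconstructs $(C,\eta)$ via $\eta=\mathcal O_C(D)$, $2D=H\cap C$. Since a general $C$ has trivial automorphism group and lies on a unique quadric, this identifies $\mathcal S^-_4$ birationally with the GIT quotient $\mathcal Q$ by $\mathrm{PGL}_4$ of the family of such tritangent configurations, and the task reduces to proving $\mathcal Q$ rational.

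The first step is to linearize the configuration by a slice. Using $\mathrm{PGL}_4$ I would fix $H=\{x_0=0\}$ and move the three contact points $p_1,p_2,p_3$ to the coordinate points of $H$; the residual stabilizer is then the connected solvable group $G\cong(\mathbb G_m)^3\ltimes(\mathbb G_a)^3$ of dimension $6$, extended by the symmetric group $S_3$ permuting the $p_i$, and $\mathcal Q$ is birational to $\Sigma/(G\rtimes S_3)$ for the resulting $15$-dimensional slice $\Sigma$. Over $\Sigma$ the quadric is $Q=q+x_0\lambda$, with $q$ a conic through $p_1,p_2,p_3$ (two parameters) and $\lambda$ a linear form, while the cubic is $F=f+x_0\mu$ modulo $(Q)$, where $f$ is forced to pass through the $p_i$ with the same tangents as $q$ and hence ranges in a linear system of dimension $3$. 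Imposing $2D=H\cap Q\cap F$ is exactly the tangency of $q$ and $f$ at the three points and produces no further conditions, so $\Sigma$ is fibred in affine spaces---with fibre the normal data $(\lambda,\mu)$---over the base $B$ of pairs $(q,f)$; as $B$ is a projective bundle over the $\mathbf P^2$ of conics through $p_1,p_2,p_3$, both $B$ and $\Sigma$ are rational.

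It remains to descend along $G\rtimes S_3$. Because $G$ is connected, solvable, hence special, and acts on $\Sigma$ through linear substitutions in the normal data and scalings of the coordinates, I would use it to normalize six of the defining coefficients---three by the unipotent translations $x_i\mapsto x_i+v_ix_0$ and three by the diagonal torus---cutting out an explicit $9$-dimensional slice $\Sigma_0$ that meets the general $G$-orbit once. Then $\Sigma_0$ is an open subset of an affine space, so rational, and the no-name lemma ensures that no rationality is lost in the descent; thus $\mathcal Q\sim\Sigma_0/S_3$.

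I expect the last quotient to be the main obstacle. The connected group $G$ behaves well, but the residual $S_3$ permuting the contact points need not preserve rationality, since a finite quotient of a rational variety can fail to be rational. The resolution I would pursue is to arrange coordinates in which $S_3$ acts as a genuine permutation of three symmetric blocks of data and pass to the elementary symmetric functions, for which the quotient is visibly rational, while checking that the normalizations defining $\Sigma_0$ remain valid on a dense $S_3$-stable open set. A secondary point, needed for the birational identification $\mathcal S^-_4\sim\mathcal Q$ to be clean, is to verify that the generic tritangent configuration is GIT-stable, so that the boundary does not interfere with the slice computation.
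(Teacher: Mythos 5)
Your geometric setup is sound and your dimension counts check out ($\Sigma$ of dimension $15$, residual stabilizer $G\rtimes S_3$ with $G\cong(\mathbb G_m)^3\ltimes(\mathbb G_a)^3$ of dimension $6$), and your identification of $\mathcal S^-_4$ with a quotient of tritangent configurations is essentially the paper's Lemma 5.3 and its corollary. But your route diverges from the paper at the choice of slice, and that is where the genuine gap sits. You fix the tritangent plane $H$ and the contact points, which leaves a positive-dimensional nonreductive residual group; the paper instead fixes the \emph{quadric}: a general canonical genus-$4$ curve lies on a unique quadric, all smooth quadrics are projectively equivalent, so one takes $Q=\mathbf P^1\times\mathbf P^1$ and places the three contact points at standard positions on the diagonal $L$. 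The residual group is then the \emph{finite} group $\mathbb G$, an extension of $\mathbb Z_2$ (the ruling swap $\iota$) by $\mathbb G_3\cong S_3$, acting \emph{linearly} on the $9$-dimensional linear system $\mathbf PI$, $I=H^0(\mathcal I_{2d}(3,3))$. Rationality of $\mathbf PI/\mathbb G$ is then proved concretely: decompose $I=I^+\oplus I^-$ into $\iota$-eigenspaces and each eigenspace into explicit irreducible $\mathbb G$-summands; replace $\mathbf PI$ birationally by the $\mathbf P^1$-bundle $\mathbb P\to B=\mathbf PI^+\times\mathbf PI^-$ of lines joining the eigenspaces; quotient first by $\mathbb G_3$ and then by the fiberwise involution $\overline\iota$, which fixes two points on each fiber, so that $\mathbf PI/\mathbb G\cong(B/\mathbb G_3)\times\mathbf P^1$, with $B/\mathbb G_3$ rational as a $\mathbf P^5$-bundle over $\mathbf PI^+/\mathbb G_3=\mathbf P^3/S_3$.

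Concretely, everything in your proposal after ``It remains to descend'' is a program rather than a proof, and it is exactly the hard part. First, the no-name lemma does not apply as you invoke it: it concerns linear actions on vector bundles, whereas what you need is that the $G$-quotient $\Sigma\dashrightarrow\Sigma/G$ admits a rational section --- true, since connected solvable groups are special, so the torsor is Zariski-locally trivial --- but a section obtained this way is not $S_3$-equivariant, and without an $S_3$-stable slice $\Sigma_0$ your identification $\mathcal Q\sim\Sigma_0/S_3$ breaks down; you acknowledge this but never exhibit the symmetric normalizations. Second, even granting $\Sigma_0$, the step ``pass to elementary symmetric functions'' presupposes that the induced $S_3$-action on the $9$-dimensional $\Sigma_0$ is a genuine permutation of three blocks, which is not automatic and is not verified; the robust fact available is that \emph{linear} $S_3$-quotients are rational, so you would need a birational linearization of the residual action --- the paper gets this for free because its slice $\mathbf PI$ is a linear system on which $\mathbb G$ acts linearly from the start. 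Third, the GIT framing is unnecessary: working birationally, one only needs that fibers of the dominant moduli map are single orbits of the symmetry group, proved as in the paper's Lemma 5.3 via the canonical embedding; no stability analysis is required. The quickest repair of your argument is to adopt the paper's slice --- fix $Q$ rather than $H$ --- after which the nonreductive group disappears and the remaining finite linear action can be handled by the eigenspace-and-bundle argument sketched above.
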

\begin{theorem} The moduli spaces of $4$-spin curves of genus 3 are rational. \end{theorem}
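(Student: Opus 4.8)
The plan is to run, for $g=3$ and $r=4$, the analogue of the construction used for the first theorem, replacing the divisor $t\in|\eta^{\otimes 2}|$ by the one attached to $|\eta^{\otimes(r-1)}|=|\eta^{\otimes 3}|$. For a general $4$-spin curve $(C,\eta)$ we have $\deg\eta=1$ and, by Serre duality, $h^0(\eta^{\otimes 3})=h^1(\eta)=h^0(\eta)+1$. First I would check that a general $(C,\eta)$ in each component has $h^0(\eta)=0$: for the even component this is automatic on a non-hyperelliptic $C$, since $\eta=\OO_C(x)$ would force $\eta^{\otimes 2}=\OO_C(2x)$ to be an \emph{odd} theta characteristic; in the odd component it holds for the general square root. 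Hence $|\eta^{\otimes 3}|$ consists of a single effective divisor $D$ of degree $3$, and $\eta=\omega_C(-D)$, so $(C,\eta)$ and $(C,D)$ determine each other. The relation $\eta^{\otimes 4}=\omega_C$ translates into $4D\sim 3H$, where $H$ is the hyperplane class of the canonical plane quartic $C\subset\mathbf{P}^2$; equivalently $4D$ is cut on $C$ by a cubic $\Gamma$. Since the restriction $H^0(\mathbf{P}^2,\OO(3))\to H^0(C,\OO_C(3))$ is an isomorphism (both spaces have dimension $10$ and no cubic contains $C$), this $\Gamma$ is unique, so $4D=C\cap\Gamma$ is a complete intersection of the quartic and a cubic, concentrated with multiplicity $4$ at each of the three points of $D$ — the exact analogue of the divisor $3t$ of the genus-$4$ construction.

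Next I would identify $\mathcal{S}^{1/4}_3$ birationally with the space of configurations $(C,\Gamma)$, quartic and cubic, with $C\cap\Gamma=4d_1+4d_2+4d_3$, taken modulo $\mathrm{PGL}_3$, and match the two connected components to a geometric condition on $D$. The key remark is that $\theta:=\eta^{\otimes 2}=\OO_C(2D-H)$ is itself a theta characteristic (indeed $2\theta\sim 4D-2H\sim 3H-2H\sim K$), and its parity is exactly the discrete invariant distinguishing the two components: the odd locus is where $2D-H$ is effective, i.e. where there is a bitangent line $\ell$ with $\ell\cdot C=2p+2q$ and $2D\sim H+p+q$, and the even locus is its complement. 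Since quartics move in $\mathbf{P}^{14}$ and carry only finitely many admissible $D$, I expect each parity locus to have dimension $14$, so that each quotient has the correct dimension $6=\dim\mathcal{S}^{1/4}_3$. The parity being locally constant on $\mathcal{S}^{1/4}_3$, the configuration space splits accordingly into two pieces, and it remains to prove that each is rational.

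For rationality I would fiber the configuration space by remembering $\Gamma$ together with the marked points $d_1,d_2,d_3\in\Gamma$. For fixed $(\Gamma,d_1,d_2,d_3)$ the quartics $C$ with $C\cap\Gamma=4\sum d_i$ form, via the restriction sequence $0\to H^0(\mathbf{P}^2,\OO(1))\to H^0(\mathbf{P}^2,\OO(4))\to H^0(\Gamma,\OO_\Gamma(4))\to 0$ (multiplication by the equation of $\Gamma$), a linear system of projective dimension $3$; thus the configuration space is a Zariski-locally trivial $\mathbf{P}^3$-bundle, coming from a rank-$4$ sub-bundle of the trivial bundle of quartics, over the base $B$ of admissible $(\Gamma,d_1,d_2,d_3)$. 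Normalizing $d_1,d_2,d_3$ to the coordinate points $e_1,e_2,e_3$ by $\mathrm{PGL}_3$ reduces the group to the diagonal torus $T\cong\mathbb{G}_m^2$, which acts generically freely, and presents $B$ as the subvariety $B'\subset\mathbf{P}^6$ of cubics through $e_1,e_2,e_3$ satisfying the torsion constraint $d_1+d_2+d_3-H_\Gamma\in\Gamma[4]$ (precisely the condition that an admissible quartic exist). Applying the no-name lemma to the rank-$4$ bundle of quartics over $B'$, rationality of each component reduces to rationality of $B'/T$.

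The main obstacle is this last step: proving that $B'$, cut out of the $\mathbf{P}^6$ of cubics through three fixed points by the $4$-torsion condition $\tau:=d_1+d_2+d_3-H_\Gamma\in\Gamma[4]$, is rational (it has the expected dimension $5$, the condition being of codimension one), that its $T$-quotient is rational, and that the even/odd splitting of the second paragraph restricts to rational pieces. The delicate point is that comparing the parity of the theta characteristic $\theta$ living on $C$ with torsion data of $\tau$ on the a priori unrelated elliptic curve $\Gamma$ is not formal; the $16$ values of $\tau$ over a fixed $(\Gamma,d_1,d_2)$ distribute over the two parities by a Weil-pairing/Arf-invariant rule that is not simply membership in $\Gamma[2]$. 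I would resolve this by realizing $B'$ as a family of elliptic curves equipped with level-$4$ data over a rational base, and using the explicit group law on the cubic to solve for $\Gamma$ and the marked points rationally in terms of the chosen torsion point, so that the torsion cover becomes rational and the monodromy orbits of the $4$-torsion sheets are seen to reproduce exactly the two parity components. The remaining verifications — that the general configuration has $C$ smooth, $D$ reduced, $\Gamma$ smooth and unique, $h^0(\eta)=0$, and that the stated $\mathbf{P}^3$-fibration is nondegenerate — are routine genericity checks.
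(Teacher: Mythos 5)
Your even-component setup coincides with the paper's Section 7 construction: the unique $D\in\vert\eta^{\otimes 3}\vert$, the unique cubic $\Gamma$ with $C\cdot\Gamma=4D$, the $\mathbf P^3$ of quartics through $4D$, and the $4$-torsion point $\tau=\mathcal O_\Gamma(H-D)$. But as written the proof has two genuine gaps. First, a group-theoretic slip that makes the final step fail: after normalizing $d_1,d_2,d_3$ to the coordinate points, the residual subgroup of $\mathrm{PGL}_3$ is not the diagonal torus $T$ but the full stabilizer $T\rtimes S_3$ of the unordered triple (the divisor $D$ carries no ordering, and coordinate permutations preserve the configuration data). Quotienting only by $T$ produces a variety mapping generically $6{:}1$ onto the moduli space, and rationality of a degree-$6$ cover implies nothing about its quotient; so even granting everything else, the argument does not close. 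Second, your own ``main obstacle'' paragraph is a plan, not a proof: the rationality of $B'$, of its quotient, and the matching of torsion data with parity are all left open, and these are exactly the steps carrying the difficulty.

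On the parity question your instinct is too pessimistic, and the true rule is simpler than the Weil-pairing/Arf mechanism you anticipate: it \emph{is} membership in $\Gamma[2]$. If $\tau^{\otimes 2}$ is trivial, a conic $B$ cuts $2D$ on $\Gamma$; then $B$ is tangent to $C$ at the three points of $D$, so $\eta^{\otimes 2}=\mathcal O_C(B\cdot C-2D)$ is effective and odd (this is the paper's Lemma 7.2); conversely, parity is deformation-invariant and the family over the locus of primitive $4$-torsion is irreducible, so order exactly $4$ gives the even component. The paper exploits this dichotomy but packages the two components quite differently from you. For the even component it avoids any group quotient: it builds a tower of projectivized pushforward bundles over the rational modular curve $\mathcal T$ of triples $(E,H,\tau)$ with $\tau^{\otimes 2}\not\cong\mathcal O_E$, and proves the resulting $6$-dimensional bundle maps birationally to $\mathcal S^{1/4+}_3$; crucially, the injectivity argument uses $\tau^{\otimes 2}\not\cong\mathcal O_E$ (an automorphism of a general $E$ fixing $H$ and $\tau$ must be the identity), so this route cannot be run verbatim when $\tau$ is $2$-torsion, since $-1$ fixes it — presumably why the paper abandons the plane model for the odd component and instead maps $C$ by $\vert\omega_C(n_1+n_2)\vert$, with $\eta^{\otimes 2}=\mathcal O_C(n_1+n_2)$ the odd theta, to a one-nodal bidegree $(3,3)$ curve on $\mathbf P^1\times\mathbf P^1$ with a tritangent conic, getting rationality from a $\mathbf P^4$-bundle over $Q-L$ modulo the finite group $\mathbb G$ of the $\mathcal S^-_4$ section. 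Your uniform quartic--cubic approach is plausibly salvageable once you (i) replace $T$ by $T\rtimes S_3$ throughout and prove rationality of that quotient, and (ii) substitute the order-$2$/order-$4$ dichotomy for the conjectured monodromy analysis; but in its present form the proposal stops short of a proof at precisely these points.
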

We have not found evidence to the uniruledness of $\mathcal S^{1 / r}_g$ in the only two missing cases in genus $g \leq 4$, namely for $\mathcal S^{1 / 6 +}_4$ and
$\mathcal S^{ 1 / 6 -}_4$. The same lack of evidence appears for further very low values of $g$, say $g \leq 7$ and $r \geq 3$.  Already for these cases, it could be interesting to apply some recent results on the structure of the Picard group of the Deligne-Mumford compactification of $\mathcal S^{1 / r}_g$ to obtain informations on the Kodaira dimension of these spaces, (cfr. for instance  \cite{pernigotti2013rational} and \cite{randal2012picard}). 
\medskip \par
\underline {Some frequently used notation and conventions:} \par
$\circ$ Let $C$ be a stable curve of genus $g$, we denote by $[C]$ its moduli point in $\overline {\mathcal M}_g$. In the same way we denote by $[C; L_1, \dots, L_m]$ the moduli
point of $(C;L_1, \dots, L_m)$, where $L_1 \dots L_m$ are line budles on $C$ of fixed degrees. \par
$\circ$ If $L$ is a line bundle on $X$ then $\vert L \vert$ denotes the linear system of the divisors of $X$ defined by the global sections of $L$. \par
$\circ $ Throughout the paper an elliptic curve $E$ is a smooth, connected curve $E$ of genus $1$, marked by one point $o$.
 
\section{3-spin curves of genus 4} 
Let $(C, \eta)$ be a spin curve of genus $g$ and order $r$. We will assume that $C$ is canonically embedded in $\mathbf P^{g-1}$. \par Putting $k =  [\frac {g-1}{\deg  \eta}] + 1$, we have $h^0(\eta^{\otimes k}) \geq 1$ by Riemann-Roch.
This implies that each effective divisor $t \in \vert \eta^{\otimes k} \vert$ satisfies the condition $rt = C \cdot F$, where $F$ is a hypersurface of degree $k$.
If $\deg \eta \ \text {divides $g$}$ then $\deg  t = g$ and we expect that $t$ is isolated, which is equivalent to $h^1(\eta^{\otimes k}) = 0$.  \par Let us focus  on the case $g=4$ and $r = 3$. 
In this situation $C\subset \mathbf P^3$ is a genus $4$ curve of degree $6$ and $t$ is a divisor in the linear system $\left|\eta^{\otimes 2}\right|$. Then $3t$  is a bicanonical divisor and there exists  a quadric surface 
$S$ such that $$ 3t=C \cdot S. $$  \begin{lemma} Let $C$ be a general curve of genus $4$, then $h^0(\eta) = 0$ for every $3$-spin curve $(C, \eta)$.  \end{lemma}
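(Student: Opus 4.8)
The plan is to turn the statement into a genericity assertion about the canonical curve and to prove that the exceptional locus in $\mathcal M_4$ is a proper subvariety. First I would record the numerics. Since $\deg\eta=\tfrac13\deg\omega_C=2$ and a general genus $4$ curve is non-hyperelliptic, Riemann--Roch forces $h^0(\eta)\le 1$, with $h^0(\eta)=1$ exactly when $\eta\cong\mathcal O_C(D)$ for an effective divisor $D=p+q$ of degree $2$; in that case $\mathcal O_C(3D)\cong\eta^{\otimes 3}\cong\omega_C$. Thus the lemma is equivalent to the claim that, for general $C$, no effective $D\in C^{(2)}$ satisfies $3D\sim K_C$, i.e.\ $\omega_C\notin 3W_2$ where $W_2\subset\Pic^2(C)$ is the image of $C^{(2)}$.

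Next I would pass to the canonical model $C\subset\mathbf P^3$, where the condition becomes a piece of classical projective geometry. A degree-$6$ effective divisor is canonical precisely when it is a hyperplane section, so an effective cube root with $p\ne q$ corresponds to a plane $H$ with $H\cdot C=3p+3q$. As any plane meeting $C$ with multiplicity $\ge 3$ at a point is the osculating plane there, such an $H$ is simultaneously the osculating plane at $p$ and at $q$; conversely two points sharing an osculating plane force $H\cdot C=3p+3q$ by degree. Writing $\gamma\colon C\to\check{\mathbf P}^3$ for the osculating (developable) map $p\mapsto$ osculating plane, effective cube roots with $p\ne q$ are therefore in bijection with coincidences $\gamma(p)=\gamma(q)$, while the degenerate case $p=q$ gives a hyperosculating plane of contact $6$. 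The goal is reduced to: for general $C$ the map $\gamma$ is injective and $C$ carries no point of contact $\ge 4$.

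I would then make the dimension count per curve, which already predicts the right answer. For fixed $C$ the coincidence locus is $(\gamma\times\gamma)^{-1}(\Delta)\subset C\times C$, the preimage of the diagonal $\Delta\subset\check{\mathbf P}^3\times\check{\mathbf P}^3$; here the source has dimension $2$ and $\Delta$ has codimension $3$, so the expected dimension is negative and the locus is expected empty. To convert this into a statement for the general curve I would use the finite cover $f\colon\mathcal S^{1/3}_4\to\mathcal M_4$: the bad locus $B=\{(C,\eta):h^0(\eta)\ge 1\}$ is closed by upper-semicontinuity of $h^0$, its image $f(B)$ is closed since $f$ is finite, and it suffices to exhibit a single $C_0$ all of whose cube roots satisfy $h^0=0$ to conclude $f(B)\subsetneq\mathcal M_4$ and hence the vanishing for every $\eta$ over a general $C$.

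The hard part is ruling out an excess-intersection phenomenon: a priori the coincidence conditions could be dependent along the universal family and $B$ could dominate $\mathcal M_4$. I expect this to be the only real obstacle, and I would settle it by producing one explicit non-hyperelliptic $C_0\subset\mathbf P^3$ whose osculating map is an injective immersion with no point of contact $\ge 4$ -- equivalently, by checking that the differential of $(C,p,q)\mapsto \gamma_C(p)-\gamma_C(q)$ is surjective there, so that the injective locus is open and nonempty. As a consistency check, the count is sharp and the hypothesis ``general'' is essential: every hyperelliptic genus $4$ curve does admit an effective cube root, since $\omega_C\cong 3\,g^1_2$, so $B$ necessarily contains the $7$-dimensional hyperelliptic locus, confirming that effectivity of a cube root is a genuinely special, non-generic phenomenon.
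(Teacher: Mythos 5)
Your specialization framework (the bad locus $B=\{(C,\eta):h^0(\eta)\ge 1\}$ is closed by semicontinuity, the forgetful map to $\mathcal M_4$ is finite, so one good curve $C_0$ suffices) is sound, and your translation into osculating-plane geometry is mostly correct; but your stated reduction is mathematically false as written. You claim it suffices that ``$\gamma$ is injective and $C$ carries no point of contact $\ge 4$.'' No genus-$4$ canonical curve satisfies the second condition: for the canonical series ($r=3$, $d=6$, $g=4$) the Pl\"ucker--Brill--Segre count gives total inflectionary weight $(r+1)\bigl(d+r(g-1)\bigr)=60$, so \emph{every} canonically embedded genus-$4$ curve has hyperosculating points (generically $60$ distinct stalls of contact exactly $4$), just as every curve has Weierstrass points. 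The curve you propose to exhibit therefore does not exist, and the verification you set yourself could never be completed. The correct conditions, which your own bijection actually yields, are weaker: exclude only plane sections of the form $3p+3q$ with $p\ne q$ (a common osculating plane) and $6p$ (contact six); stalls of contact $4$ or $5$ are harmless, since $4p+2q$, $4p+q+r$ and $5p+q$ are never of the form $3D$ with $D\in C^{(2)}$. This slip is repairable, but it must be fixed before the final step can even be attempted.

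The more serious problem is that the linchpin is announced, never executed. You explicitly defer ``the hard part'' --- ruling out excess intersection --- to ``producing one explicit $C_0$'' or ``checking that the differential of $(C,p,q)\mapsto\gamma_C(p)-\gamma_C(q)$ is surjective there,'' but no curve is produced and no differential is computed; since the existence of a single good curve is precisely the content of the lemma (your expected-dimension count over the $11$-dimensional space of triples $(C,p,q)$ is, as you concede, only a heuristic), the proposal reduces the lemma to itself plus an unproved existence claim. The paper closes exactly this hole by a parameter count instead of an example: if a general $C=Q\cap F$ had $x+y\in|\eta|$, then $3x+3y=C\cdot H$ is a complete intersection on the fixed smooth quadric $Q$; the family of such configurations admits only finitely many $\Aut Q$-orbits, and the cubic sections of $Q$ through a fixed configuration form an $8$-dimensional linear system, so curves carrying an effective cube root have at most $8<9=\dim\mathcal M_4$ moduli --- a contradiction. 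Grafting such a count (or an actually completed transversality computation at every point of the coincidence locus) onto your setup would repair the argument. Your closing sanity check --- that the $7$-dimensional hyperelliptic locus lies in the image of $B$ because $\omega_C\cong(g^1_2)^{\otimes 3}$ --- is correct and compatible with either route.
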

\begin{proof}   We can assume that $C = Q  \cap F$, where $Q$ is a fixed, smooth quadric and $F$ a cubic surface. Now assume $h^0(\eta) = 1$ for some cubic root  $\eta$ of $\omega_C$. 
Then there exist points $x, y \in C$ such that $x + y \in \vert \eta \vert$ and $3x + 3y = C \cdot H$, where $H \in \vert \mathcal O_Q(1) \vert$.  Let $\mathcal F$ be the family of complete intersections
$3x' + 3y' = C' \cdot H'$, where $H' \in \vert \mathcal O_Q(1) \vert$ and $C' \in \vert \mathcal O_Q(3) \vert$ is smooth. $\text{It is easy to see}$ that the action of $\Aut  Q$ on $\mathcal F$ has finitely many orbits. 
On the other hand, since $3x + 3y$ is a complete intersection, $\text {it follows}$ $\dim  \vert \mathcal I_{3x + 3y}(C) \vert = 8$, where $\mathcal I_{3x+3y}$ is the ideal sheaf of $3x + 3y$. 
But then, since the moduli space of $C$ is 9-dimensional, $C$ is not general: a contradiction. \end{proof}
From now on our spin curve $(C, \eta)$ will be sufficiently general. In particular we fix the following assumptions:
\begin{assumption} \ \par
\begin{itemize} \it
\item[$\circ$] $C$ is a complete intersection in $\mathbf P^3$ of a smooth quadric $Q$ and a cubic $F$,
\item[$\circ$] for each $x \in C$ one has $h^0(\mathcal O_C(3x)) = 1$,
\item[$\circ$] $h^0(\eta) = 0$ so that $h^0(\eta^{\otimes 2}) = 1$. 
\end{itemize} \end{assumption}
The second condition is just equivalent to say that the two $g^1_3$'s on $C$ have simple ramification. The third one is satisfied iff the unique effective divisor $t \in \vert \eta^{\otimes 2} \vert$ 
is not contained in any plane. \par It is clear that the locus of moduli of pairs $(C, \eta)$ satisfying these assumptions is a dense open subset of $\mathcal S^{1 / 3}_4$. 
It is also clear from the previous remarks that the bicanonical divisor $3t$ is a complete intersection scheme in the ambient space $\mathbf P^3$, namely 
$$ 3t = F \cdot Q \cdot S, $$
where $S$ is a quadric. This defines a second curve, we denote from now on as
$$
E := Q \cdot S.
$$
We point out that \it $E$ is uniquely defined by $(C, \eta)$. \rm $E$ is a quartic curve of arithmetic genus one. We will denote by $\mathcal I_{at}$ \it the ideal sheaf in $Q$  \rm of the divisor $at \subset C$.  Let $o \in t$ be a closed point,  we can fix local parameters $x, y$ at $o$ so that $y$ is a local equation of 
$C$ and $x$ restricts to a local parameter in $\mathcal O_{C,o}$. Then $\mathcal I_{at}$ is generated at $o$ by $x^{am}$ and $y$, where $m$ is the multiplicity of $t$ at $o$.  We observe that $3t$ is a  0-dimensional scheme of length 12, embedded in the smooth curve $C$. \par
Now assume for simplicity that $E$ is smooth. Since $3t = F \cdot Q \cdot S =  F \cdot E$, it follows that $3t$, as a divisor of $E$, belongs to $\vert \mathcal O_E(3) \vert$. Let us define
$$
\epsilon(1) := \mathcal O_E(t).
$$
Since we are assuming that $h^0(\mathcal O_C(t)) = 1$, we know that then $t$ is not contained in a plane. Hence the line bundle $\epsilon$
 is non trivial. On the other hand we have $3t \in \vert \mathcal O_E(3) \vert$ so that $\epsilon^{\otimes 3} \cong \mathcal O_E$. It follows that
\begin{lemma} $\epsilon$ is a non trivial 3-torsion element of $\rm{Pic}^0 E$. \end{lemma}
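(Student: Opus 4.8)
The plan is to establish the three assertions bundled in the statement one at a time: that $\epsilon$ sits in $\Pic^0 E$, that it is killed by $3$, and that it is not the trivial bundle. Reading the defining relation $\epsilon(1) = \mathcal O_E(t)$ as $\epsilon = \mathcal O_E(t)\otimes\mathcal O_E(-1)$, i.e. $\epsilon \cong \mathcal O_E(t - H)$ for $H$ a hyperplane section of $E$, the degree computation is immediate: $E$ is a quartic in $\mathbf P^3$, so $\deg\mathcal O_E(1) = 4$, while $\deg t = \deg \eta^{\otimes 2} = 4$ as well; hence $\deg\epsilon = 0$ and $\epsilon \in \Pic^0 E$.

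For the $3$-torsion I would argue entirely through the cube, which is where the geometry is cleanest, since $3t$ is unambiguously a divisor on $E$ regardless of any scheme-theoretic fine print about $t$ itself. Indeed $3t = F\cdot Q\cdot S = F\cdot E$ exhibits $3t$ as the intersection of $E$ with the cubic surface $F$, so $3t \in \vert\mathcal O_E(3)\vert$ and $\mathcal O_E(3t)\cong\mathcal O_E(3)$. Cubing the expression for $\epsilon$ then gives $\epsilon^{\otimes 3}\cong \mathcal O_E(3t)\otimes\mathcal O_E(-3)\cong\mathcal O_E$, so $\epsilon$ is a $3$-torsion point.

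The substantive assertion is the non-triviality, and it is here that the genericity Assumption enters. The bundle $\epsilon$ is trivial exactly when $\mathcal O_E(t)\cong\mathcal O_E(1)$, that is, when the degree-$4$ divisor $t$ is linearly equivalent on $E$ to a hyperplane section; because $t$ and a hyperplane section of $E$ share the degree $4$, this forces $t = E\cap H'$ for some plane $H'$, so $\epsilon\cong\mathcal O_E$ is equivalent to $t$ lying on a plane. I would rule this out using the canonical curve $C$: by Riemann--Roch and Serre duality, $h^0(\mathcal O_C(t)) = 1$ yields $h^0(\omega_C(-t)) = h^1(\mathcal O_C(t)) = 0$, so the residual degree-$2$ divisor $\omega_C - t$ is not effective and $t$ is not part of any hyperplane section of the canonically embedded $C$; equivalently, $t$ lies on no plane of $\mathbf P^3$. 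This is precisely the reformulation of the third clause of the Assumption, and it gives $\mathcal O_E(t)\not\cong\mathcal O_E(1)$, hence $\epsilon\neq\mathcal O_E$.

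The step I expect to require the most care -- the genuine obstacle -- is the passage ``$t$ lies on a plane'' between the two curves $C$ and $E$, which share the points of $t$ but endow it a priori with different ideals. To keep ``contained in a plane'' and ``$\mathcal O_E(t)\cong\mathcal O_E(1)$'' interchangeable I would use the local description $\mathcal I_t = (x^m, y)$ recorded above (with $E$ smooth, as assumed) to confirm that the length-$4$ cycle $t$ restricts to the same effective divisor of degree $4$ on $E$ as on $C$. Granting this, the three properties combine to show that $\epsilon$ has order exactly $3$ in $\Pic^0 E$, which is the claim.
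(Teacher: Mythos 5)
Your proof is correct and takes essentially the same route as the paper: there, too, $\epsilon^{\otimes 3}\cong\mathcal O_E$ is read off from $3t=F\cdot E\in\vert\mathcal O_E(3)\vert$, and non-triviality comes from the assumption $h^0(\mathcal O_C(t))=1$, which is equivalent to $t$ lying on no plane and hence to $\mathcal O_E(t)\not\cong\mathcal O_E(1)$. You merely make explicit two points the paper leaves implicit, namely the degree count showing $\epsilon\in{\rm Pic}^0 E$ and the identification of $t$ as the same degree-$4$ cycle on $C$ and on $E$ (which the paper's local description of $\mathcal I_{at}$ is meant to cover).
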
 
Actually the condition that $E$ be smooth is satisfied as soon as the the pair $(C, \eta)$
is sufficently general. This is proven in the next theorem, where some useful conditions, satisfied by a general pair $(C, \eta)$, are summarized.
 \begin{theorem} On a dense open set $U \subset \mathcal S^{1 / 3}_4$ every point is the moduli point of a spin curve $(C, \eta)$ such that: 
\begin{enumerate} \it
\item[$1$] $(C, \eta)$ is general as in assumption 2.1,
\item[$2$] $E$ is a smooth quartic elliptic curve, 
\item [$3$] $t$ is a smooth divisor of $E$,
\item[$4$] $t \in \vert \epsilon(1) \vert $, where $\epsilon$ is a non trivial third root of $\mathcal O_E$.
\end{enumerate}
\end{theorem}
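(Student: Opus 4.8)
The plan is to derive the four properties from the irreducibility of $\mathcal S^{1/3}_4$, which holds because $r=3$ is odd, together with the observation that each of the conditions $1$--$4$ is open; it will then suffice to exhibit a single spin curve satisfying all of them. Condition $1$ is the defining condition of the dense open locus of Assumption 2.1, and condition $4$ is exactly the content of the preceding Lemma: once $E$ is smooth, $\epsilon=\mathcal O_E(t)(-1)$ is automatically a non-trivial third root of $\mathcal O_E$. The remaining conditions $2$ and $3$, namely smoothness of the complete intersection $E=Q\cdot S$ and reducedness of the degree-four divisor $t$, are open because one may spread $Q$, $S$ and $t$ out over a neighbourhood of any moduli point and invoke openness of smoothness of fibres. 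Hence the locus $U$ where all four conditions hold is open, and by irreducibility it is dense as soon as it is non-empty.

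To produce a point of $U$ I would run the construction in reverse. Fix a smooth quadric $Q$, choose a quadric $S$ so that $E=Q\cap S$ is a smooth elliptic quartic, and pick a non-trivial $3$-torsion line bundle $\epsilon\in\Pic^0E$. The complete system $|\epsilon(1)|$ is a $\mathbf P^3$ of degree-four divisors, whose general member $t$ consists of four distinct points, securing conditions $2$, $3$ and $4$. Since $\epsilon^{\otimes 3}\cong\mathcal O_E$ we have $3t\in|\mathcal O_E(3)|$, so there is a cubic $F$ with $F\cdot E=3t$; set $C:=Q\cap F$ and $\eta:=\omega_C(-t)$. The identity $\mathcal O_C(3t)=\mathcal O_C(2)=\omega_C^{\otimes 2}$ gives $\eta^{\otimes 3}\cong\omega_C$ and $t\in|\eta^{\otimes 2}|$, so $(C,\eta)$ is a genuine $3$-spin curve whose associated elliptic curve is the smooth $E$ we started from. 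A dimension count confirms that this assignment dominates $\mathcal S^{1/3}_4$: the curve $E$ varies in an $8$-dimensional family, $\epsilon$ is discrete, $t$ moves in the $3$-dimensional system $|\epsilon(1)|$, and the cubic $F$ contributes the $4$ moduli of $C=Q\cap F$; quotienting the resulting $15$-dimensional family by $\Aut Q$, of dimension $6$, leaves exactly $9=\dim\mathcal S^{1/3}_4$.

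The main obstacle is to verify that the curve $C=Q\cap F$ constructed above is smooth of genus $4$ and general in the sense of Assumption 2.1, and that recovering $(E,\epsilon,t)$ from $(C,\eta)$ returns the smooth data we began with. For smoothness and genericity of $C$ I would exploit the freedom in the choice of $F$ among the $8$-dimensional family of cubics cutting $3t$ on $E$: by Bertini a general such $F$ meets $Q$ in a smooth curve, and one must further check that this curve avoids the finitely many degenerations excluded by Assumption 2.1, namely $h^0(\eta)\neq 0$, non-reduced $t$, and failure of simple ramification of the two $g^1_3$'s. For the recovery, the point is that $E$ is reconstructed as $Q\cap S$, where $S$ is the quadric, unique modulo $Q$, cutting the bicanonical divisor $3t$ on $C$; since $3t\subset E$ by construction, this quadric pencil is again $\langle Q,S\rangle$ and the reconstructed elliptic curve is our original smooth $E$. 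Granting these verifications, $U$ is non-empty, hence dense, and the theorem follows.
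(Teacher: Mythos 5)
Your skeleton is the same as the paper's: irreducibility of $\mathcal S^{1/3}_4$ (valid since $r=3$ is odd, by Jarvis), openness of the conditions, and a reverse construction starting from a smooth elliptic quartic $E=Q\cap S$, a non-trivial $\epsilon\in\Pic^0 E$ with $\epsilon^{\otimes 3}\cong\mathcal O_E$, a general $t\in\vert\epsilon(1)\vert$, and a cubic $F$ cutting $3t$ on $E$ (such $F$ exists by projective normality of $E$, a point worth making explicit). The genuine gap is the smoothness of $C$. The linear system $\vert\mathcal I_{3t}(3)\vert$ on $Q$ has base locus exactly the length-$12$ scheme $3t$, so Bertini yields smoothness of the general member only \emph{away} from the four points of $t$; your assertion that ``by Bertini a general such $F$ meets $Q$ in a smooth curve'' fails precisely at $t$, which is where every member is forced to contain a length-$3$ curvilinear scheme. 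You correctly identify this as ``the main obstacle'' but never discharge it. The paper's proof does so with a one-line trick: the reducible member $E+L\in\vert\mathcal I_{3t}(3)\vert$, where $L$ is a general plane section of $Q$ avoiding $t$, contains the scheme $3t$ (since $3t\subset E$) and is smooth at every point of $t$; since smoothness at the finitely many points of $t$ is an open condition on the system, the general member is smooth there as well, and combined with Bertini off the base locus it is smooth everywhere. Without this (or some substitute, e.g.\ a local computation at a point of $t$), your construction does not produce a smooth $C$ and the proof is incomplete.

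Two secondary points. First, you make your example carry more weight than needed: you insist the constructed pair satisfy all four conditions, including Assumption 2.1, and then leave ``avoids the finitely many degenerations'' as an unverified to-do. The paper sidesteps this: the open set defined by condition 1 is already known to be non-empty, so by irreducibility it suffices to exhibit one pair satisfying conditions 2--4 alone, the intersection of two non-empty open sets in an irreducible space being non-empty and dense. You set up exactly this density argument in your first paragraph and then fail to exploit it. Second, the dimension count ``confirming dominance'' is both unnecessary (non-emptiness of the open locus is all the theorem requires) and not a valid inference: a $9$-dimensional family mapping to the $9$-dimensional $\mathcal S^{1/3}_4$ may still have positive-dimensional fibres and a lower-dimensional image. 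With the $E+L$ argument inserted and the example's burden reduced to conditions 2--4, your proof coincides with the paper's.
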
 
\begin{proof}  We use the irreducibility of $\mathcal S^{1 / r}_g$ when $r$ is odd and $g \geq 2$, \cite{jarvis2000geometry}. $\mathcal S^{ 1 / 3}_4$ is irreducible,
so that every non empty open subset of it is dense. Conditions 1) and 2), 3), 4) are open on  families of triples $(C, \eta, E)$ hence they define open subsets of 
$\mathcal S^{1 / 3}_4$. We already know that the open set defined by 1) is not empty. Therefore, to prove the theorem, it suffices to produce one pair $(C, \eta)$ satisfying
2), 3), 4).   We start from a smooth elliptic quartic $E$. We have $E = Q \cdot S \subset \mathbf P^3$, where $Q, S$ are smooth quadrics.   Let $\epsilon \in \rm{Pic}^0 E$ be a non trivial element such that $\epsilon^{\otimes 3} \cong \mathcal O_E$. Since $\epsilon(1)$ is very ample, a general
$t \in \vert \epsilon(1) \vert$ is smooth and not contained in a plane. Note that $3t \in \vert \mathcal O_E(3) \vert$. Then, since $E$ is projectively normal, there exists a cubic surface $F$ such that
$$
3t = Q \cdot S \cdot F
$$
in the ambient space $\mathbf P^3$. Let $\mathcal I_{3t}$ be the ideal sheaf of $3t$ in $Q$, then we have $h^0(\mathcal I_{3t}(3)) = 5$. Moreover the  base locus of $\vert \mathcal I_{3t}(3) \vert$ is $3t$. Hence, by Bertini theorem, a general $C \in \vert \mathcal I_{3t}(3) \vert$ is smooth along $C - t$. To prove that a general $C$ is smooth along $t$ it suffices to produce one element with this property. This is the case for $E + L$, where $L$ is a general plane section. Let $C \in \vert \mathcal I_{3t}(3) \vert$ be smooth and let $\eta := \mathcal O_C(1 -t)$. $(C, \eta)$ is a spin curve of order 3
satisfying 2), 3), 4).
\end{proof}

\section{Projective bundles related to \texorpdfstring{$\mathcal S^{1 / 3}_4$}{moduli of 3-roots of genus 4}}
 
 Let $(C, \eta)$ be a general spin curve of order 3 and genus 4. We keep the previous conventions, so that $C$ is canonically embedded in $\mathbf P^3$ as $Q \cap F$.  \par It follows from the above theorem that 
 the moduli point $[C, \eta]$ uniquely defines, up to isomorphisms, a triple $(E, \epsilon, t)$ such that $E$ is a smooth quartic elliptic curve in $\mathbf P^3$ and $\epsilon$ is
a non trivial third root of $\mathcal O_E$. \par Moreover $t$ is a  smooth element of $\vert \epsilon(1) \vert$ and $3t$ is a complete intersection $$ 3t = C \cdot E = F \cdot Q \cdot S \subset \mathbf P^3,$$ where $S$ is a quadric. As a divisor in $C$, $t$ is the the unique element of $\vert \eta^{\otimes 2} \vert$. In order to prove the rationality of $\mathcal S^{1 / 3}_4$ our strategy is as follows. We consider the moduli space of elliptic curves $E$ endowed with a non trivial 3-torsion element of $\rm{Pic}^0 E$, namely
$$
\mathcal{R}_{1,3}:=\{[E,\varepsilon] \ \mid \ g(E)=1, \quad \varepsilon\neq \mathcal{O}_E, \quad \varepsilon^{\otimes 3}\cong \mathcal{O}_E\}.
$$
Over it we have the moduli space $\mathcal P_{1,4}$  of triples $(E, \epsilon, H)$ such that $H \in \Pic^4 E$. This can be also defined via the Cartesian square
$$
\begin{CD}
{\mathcal P_{4,1}} @>>> {\mathcal P ic_{4,1}} \\
@VVV @VVV \\
{\mathcal R_{1,3}} @>>> {\mathcal M_1.} \\
\end{CD}
$$
As usual, $\mathcal P ic_{4, 1}$ denotes the universal Picard variety, that is, the moduli space of pairs $(H, E)$ such that $E$ is an elliptic curve and $H \in Pic^4 \ E$. \par
The space $\mathcal P_{4, 1}$ is a rational surface.
Proving its unirationality, so that the rationality follows,  is easy. Starting from $\mathcal P_{4,1}$ we construct a suitable ``tower" 
$$
\mathbb P_c \stackrel {c} \to \mathbb P_b \stackrel {b} \to \mathbb P_a \stackrel{a} \to  \mathcal P_{4,1}
$$
of projective bundles $a, b, c$. Clearly, as a ``tower" of projective bundles over a rational base, $\mathbb P$ is rational. Let $\phi: \mathcal S^{1 / 3}_4 \to \mathcal P_{4 , 1}$ be the rational map defined as 
follows: $\phi ([C, \eta]) := [E, \epsilon]$. Then we will show 
that $\phi$ factors through a natural birational map between $\mathcal S^{1 / 3}_4$ and $\mathbb P_c$, so proving that $\mathcal S^{1 / 3}_4$ is rational. In the next subsections we produce the 
projective bundles which are needed.

\subsection{The ambient bundle \texorpdfstring{$\mathbb P$}{}}
Let us start with the universal elliptic curve over $\mathcal{M}_{1}$ and its pull-back $\mathcal{E} \to \mathcal{R}_{1,3}$. As is well known there exists a Poincar\'e bundle $\mathcal P$
on the fibre  product $\mathcal{P}_{4,1}\times_U \mathcal {E}$, where $U \subset \mathcal R_{1,3}$ is a suitable dense open set. In particular the restriction of $\mathcal P$ to the fibre at $[E, \epsilon, H]$ 
of the projection map $$ \alpha: \mathcal P_{4,1} \times_U \times \mathcal E \to \mathcal P_{4,1}$$  is given by
$
\mathcal{P}\otimes\mathcal{O}_{\{[E,\varepsilon,H]\}\times E}\cong H.
$ Note that $\left(\alpha_*\mathcal{P}\right)_{[E,\varepsilon,H]}=H^0(H)$ has constant dimension 4.  Let $\mathcal H := \alpha_*\mathcal{P}$; then, by Grauert's theorem, $\mathcal H$  is a vector bundle of rank $4$ 
over $\mathcal{P}_{4,1}$. We define the the ambient bundle $\mathbb P$ as follows: 
$$
\mathbb P := \mathbf P \mathcal H^*.
$$
Its structure map will be denoted as $p: \mathbb P \to \mathcal P_{4,1}$. It  is a $\mathbf P^3$-bundle over $\mathcal{P}_{4,1}$. In particular the tautological bundle $\mathcal O_{\mathbb P}(1)$ defines an embedding
$$
\mathcal P_{4,1} \times_U \mathcal E \subset \mathbb P.
$$
At $x :=[E, \epsilon, H]$ this is the embedding $E \subset  \mathbb P_x = \mathbf PH^0(H)^*$ defined by $H$.

\subsection{The bundle of quadrics \texorpdfstring{$a: \mathbb P_a \to \mathcal P_{4,1}$}{}}
Let us consider the map 
$$ \mu: \mathrm{Sym}^2\mathcal{H}\to \alpha_*(\mathcal{P}^{\otimes 2}) $$
of vector bundles on $\mathcal P_{4,1}$. At $x := [E, \epsilon, H]$ we have $\alpha_*(\mathcal P^{\otimes 2})_x = H^0(H^{\otimes 2})$ and
$$
\mu_x: \mathrm{Sym}^2 H^0(H) \to H^0(H^{\otimes 2})
$$
is the multiplication map. Putting  $\mathcal{Q}:=\ker \mu$ and $\mathbb P_a :=\mathbf P\mathcal{Q}$, we denote as
$$
a: \mathbb P_a \to \mathcal P_{4,1}
$$
the structure map. The bundle $a$ is a $\mathbf P^1$-bundle and the fibre $\mathbb P_x$ parametrizes the quadrics containing the tautological embedding $E \subset \mathbb P_x$ defined by $H$.

\subsection{The \texorpdfstring{$\mathbf P^3$-bundle $b: \mathbb P_b \to \mathbb P_a$}{first P3 bundle}}
At first we define the $\mathbf P^3$-bundle
$$
e: \mathbb P_e \to \mathcal P_{4,1}.
$$
Its fibre $\mathbb P_{e,x}$ will be $\vert \varepsilon\otimes H \vert$ at $x := [E, \epsilon,H]$. On $\mathcal P_{4,1} \times_U \mathcal E$ we fix a vector bundle
$\mathcal N$ whose restriction to the fibre of $\alpha: \mathcal P_{4,1} \times_U \mathcal E \to \mathcal P_{4,1}$ at $x$ is
$$
\mathcal N \otimes \mathcal O_{\alpha^*x} \cong \epsilon.
$$
The construction of $\mathcal N$ is standard: let $\beta: \mathcal P_{4,1} \times_U \mathcal E \to \mathcal R_{1,3} \times_U \mathcal E$ be the natural map. Then 
we define $\mathcal N := \beta^* \mathcal L$, where $\mathcal L$ is a Poincar\'e bundle on $\mathcal R_{1,3} \times_U \mathcal E$. Note that $\mathcal L$ restricted to the fibre at $[E, \epsilon]$ of the projection $\gamma: \mathcal R_{1,3} \times_U \mathcal E \to \mathcal R_{1,3}$ is the
line bundle $\epsilon$. We consider the tensor product $\mathcal H \otimes \mathcal N$ and finally $\alpha_*( \mathcal H \otimes \mathcal N)$. The latter is
a rank 4 vector bundle with fibre $H^0(H \otimes \epsilon)$ at $x$. We define
$$
\mathbb P_b := a^* \mathbf P \alpha_*(\mathcal H \otimes \epsilon).
$$
$\mathbb P_b$ is a $\mathbf P^3$-bundle over $\mathbb P_a$. The fibre at $x$ of the map $a \circ b: \mathbb P_b \to \mathcal P_{4,1}$  is the Segre product
$\vert \epsilon \otimes H \vert \times \vert \mathcal I_E(2) \vert$, where $\mathcal I_E$ is the ideal sheaf of the embedding $E \subset \mathbb P_x$. \par

\subsection{The \texorpdfstring{$\mathbf P^3$-bundle $c: \mathbb P_c \to \mathbb P_b$}{second P3 bundle}}
In the fibre product $\mathbb P_b \times_{\mathcal P_{4,1} } \mathbb P$ we define the following subvarieties
$$
\bf  t \subset \bf E \subset \bf Q \subset \mathbb P_b \times_{\mathcal P_{4,1}} \mathbb P.
$$
Let $o \in \mathbb P_b \times_U \times \mathbb P$, then $o$ defines a pair $(v,z)$ where $z \in \mathbb P_x$ and $x := a \circ b (o)$ 
$= [E, \epsilon, H]$. Moreover, the point $o$ is an element $t \in \vert \epsilon \otimes H \vert$ of the fibre of $\mathbb P_b$ at $b(o)$. Finally $b(o)$ is an 
element $Q \in \vert \mathcal I_E(2) \vert$, where $\mathcal I_{\bf E}$ is the ideal sheaf of the tautological embedding $E \subset \mathbb P_x$. Clearly we
have $\bf t \subset E \subset Q$. \par The conditions $z \in{\bf t}$, $z \in {\bf E}$, $z \in {\bf Q}$ respectively define the closed sets $\bf t$, $\bf E$, $ \bf Q$.
In particular $\bf E$ is a natural embedding of $\mathcal P_{1,4} \times_U \mathcal E$ in  $\mathbb P_b \times_{\mathcal P_{4,1}} \mathbb P$ and $ \bf t$ is a Weil divisor in
$\bf E$. Let us consider the standard exact sequence
\[
0 \to \mathcal I_{3\bf t}\to \mathcal O_{\bf Q} \to \mathcal O_{3{\bf t}}\to 0
\]
where $\mathcal I_{3\bf t}$ is the ideal sheaf of $\bf t$ in $\mathbf Q$. We pull-back the line bundle
$\mathcal O_{\mathbb P}(3)$ to the fibre product $\mathbb P_b \times_{\mathcal P_{4, 1}} \mathbb P$ and tensor the above exact sequence by it. The resulting exact sequence
is denoted in the following way:
\[
0 \to \mathcal I_{3\bf t}(3)\to \mathcal{O}_{\bf Q}(3)\to\mathcal{O}_{3\bf t}(3)\to 0.
\]
Let  $\beta: \mathbb P_b \otimes \mathbb P \to \mathbb P_b$ be the projection onto $\mathbb P_b$. Then we apply the push-down functor $\beta_*$ to this new exact sequence. We obtain the exact sequence
\[
0 \to \beta_*\mathcal I_{3\bf t}(3) \to \beta_* \mathcal O_{\bf Q}(3)\to\beta_{*|_{\bf Q}}\mathcal{O}_{3{\bf t}}(3) \to R^1\beta_* \mathcal I_{3\bf t}(3) = 0.
\]
Here the sheaf $R^1 \beta_* \mathcal I_{3 \bf t}(3)$ is zero because at any point $p = (t, Q, [E, \epsilon, H]) \in \mathbb P_b$ its fibre  is $H^1(\mathcal I_{3t / Q}(3)) = 0$. Notice also that the sheaf
$\mathcal F := \beta_* \mathcal I_{3 \bf t}(3)$ is a rank 5 vector bundle with fibre $H^0(\mathcal I_{3t / Q}(3))$ at the same point $p$. Finally we define
$$
\mathbb P_c := \mathbf P \mathcal F.
$$
We denote the structure map of this $\mathbf P^4$-bundle as $c: \mathbb P_c \to \mathbb P_b$.  The fibre of $c$ at $p$ is the linear system of cubic sections $C$ of $Q$ containing the scheme
$3t \subset E$. Notice that a smooth $C$ is a canonical curve of genus 4 endowed with the order 3 spin structure $$ \eta := \omega_C(- t). $$

\section{The rationality of \texorpdfstring{$\mathcal S^{1 / 3}_4$}{the moduli space of 3-roots of genus 4}} 

Let $\mathcal I_{2t / \mathbf P^3}$ be the ideal sheaf of $2t \subset C \subset \mathbf P^3$. Notice also that
\begin{lemma} $\vert \mathcal I_{2t / \mathbf P^3}(2) \vert$ is a pencil of quadrics with base locus $E$. \end{lemma}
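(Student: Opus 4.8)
The plan is to show that the linear system $\vert \mathcal I_{2t / \mathbf P^3}(2) \vert$ of quadrics in $\mathbf P^3$ containing the length-$8$ scheme $2t$ is exactly a pencil whose base locus is the elliptic quartic $E$. First I would compute the dimension of the space of quadrics through $2t$. Since $\deg(2t) = 8$ and $2t$ lies on the curve $C \subset \mathbf P^3$, I would impose the condition that a quadric $\vert \mathcal O_{\mathbf P^3}(2) \vert$, which is a $\mathbf P^9$, pass through $2t$. The key point is that $2t$ is a divisor on $C$ and $2t \subset 3t = F \cdot Q \cdot S$, so both $Q$ and $S$ pass through $3t$, hence through $2t$; this already exhibits (at least) a pencil $\langle Q, S\rangle$ inside $\vert \mathcal I_{2t/\mathbf P^3}(2)\vert$. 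Thus I expect $h^0(\mathcal I_{2t/\mathbf P^3}(2)) \geq 2$, and the content of the lemma is that equality holds and that the base locus is precisely $E = Q \cdot S$.

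\emph{Dimension count.} To bound $h^0(\mathcal I_{2t/\mathbf P^3}(2))$ from above, I would use the exact sequence
\[
0 \to \mathcal I_{C/\mathbf P^3}(2) \to \mathcal I_{2t/\mathbf P^3}(2) \to \mathcal I_{2t/C}(2) \to 0,
\]
where $\mathcal I_{2t/C}(2) = \mathcal O_C(2)(-2t) = \omega_C^{\otimes 2}(-2t)$ since $\mathcal O_C(1) = \omega_C$ for the canonical curve. Now $\omega_C^{\otimes 2}(-2t) \cong \eta^{\otimes 6}(-2t) = \eta^{\otimes 6 - \text{(stuff)}}$; more directly, since $t \in \vert \eta^{\otimes 2}\vert$ and $\omega_C = \eta^{\otimes 3}$, we get $\omega_C^{\otimes 2}(-2t) = \eta^{\otimes 6}\otimes \eta^{\otimes(-4)} = \eta^{\otimes 2}$, which has $h^0 = 1$ by the standing assumption $h^0(\eta^{\otimes 2}) = 1$. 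For the left term, $\mathcal I_{C/\mathbf P^3}(2)$ is the space of quadrics through $C = Q \cap F$, which is one-dimensional (spanned by $Q$), so $h^0(\mathcal I_{C/\mathbf P^3}(2)) = 1$. Combining, $h^0(\mathcal I_{2t/\mathbf P^3}(2)) \leq 1 + 1 = 2$, and together with the pencil $\langle Q, S\rangle$ this forces $h^0 = 2$.

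\emph{Base locus.} Having established that the system is the pencil $\langle Q, S \rangle$, its base locus is $Q \cap S = E$ set-theoretically. I would then argue that the scheme-theoretic base locus is exactly $E$: since $Q$ and $S$ are quadrics meeting in the smooth (for general $(C,\eta)$, by the preceding theorem) complete intersection curve $E$ of degree $4$ and arithmetic genus $1$, and a general member of the pencil is smooth along $E$, the base scheme of the pencil is the reduced curve $E$ with no embedded or excess components.

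\emph{Main obstacle.} The genuinely delicate step is verifying the identification $\mathcal I_{2t/C}(2) \cong \eta^{\otimes 2}$ and the vanishing/dimension claims cleanly, i.e.\ making sure that restriction to $C$ behaves as expected and that the connecting map in the exact sequence does not cause the upper bound to fail to be attained. This is where the normalization $\mathcal O_C(1) = \omega_C = \eta^{\otimes 3}$ and the assumption $h^0(\eta^{\otimes 2}) = 1$ are used essentially; I expect the rest to be a routine dimension bookkeeping once these identifications are in place.
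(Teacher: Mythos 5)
Your proposal is correct and follows essentially the same route as the paper: the same ideal-sheaf sequence $0 \to \mathcal I_{C/\mathbf P^3}(2) \to \mathcal I_{2t/\mathbf P^3}(2) \to \eta^{\otimes 2} \to 0$ with the identification $\mathcal I_{2t/C}(2) \cong \omega_C^{\otimes 2}(-2t) \cong \eta^{\otimes 2}$ and the counts $h^0(\mathcal I_{C/\mathbf P^3}(2)) = h^0(\eta^{\otimes 2}) = 1$. Your extra bookkeeping (exhibiting the pencil $\langle Q, S\rangle$ to attain the upper bound, rather than invoking projective normality of the canonical curve for surjectivity on $H^0$, and identifying the base scheme with $Q \cap S = E$) only makes explicit what the paper leaves implicit.
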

\begin{proof} Observe that $\omega_C^{\otimes 2}(-2t) \cong \eta^{\otimes 2}$. Moreover, this is also the sheaf $\mathcal I_{2t / C}(2)$. Consider the standard exact sequence of ideal sheaves
$$
0 \to \mathcal I_{C / \mathbf P^3}(2) \to \mathcal I_{2t / \mathbf P^3}(2) \to \eta^{\otimes 2} \to 0.
$$
Since we have $h^0(\mathcal I_{ C / \mathbf P^3}(2)) = h^0(\eta^{\otimes 2}) = 1$, the statement follows.  \end{proof}

Due to the latter construction there exists a natural moduli map
$$
\phi: \mathbb P_c \to \mathcal S^{1 / 3}_4
$$
which sends a point $z = (C, t, Q, [E, \epsilon, H]) \in \mathbb P_c$ to the point $$ \phi(z) := (C, \eta), $$ with $\eta = \omega_C(-t)$. Clearly $\phi$ is defined at $z$ iff $C$ is smooth.
 Since $\mathbb P_c$ is rational we can finally deduce the rationality of $\mathcal S^{1 / 3}_4$, stated in the Introduction. We show that
 \begin{theorem} The map $\phi: \mathbb P_c \to \mathcal S^{ 1 / 3}_4$ is birational, so that  $\mathcal{S}^{1/3}_4$ is rational.
\end{theorem}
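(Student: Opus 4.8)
The plan is to show that $\phi$ is birational by constructing an inverse rational map and verifying that the composites are the identity on dense open sets. First I would establish that $\phi$ is dominant: given a general spin curve $(C,\eta)$ satisfying the genericity of Theorem~2.3, the entire tower of data $(E,\epsilon,t,Q,C)$ is canonically recovered from $(C,\eta)$. Indeed, by the discussion preceding the theorem, the unique $t \in \vert \eta^{\otimes 2}\vert$ determines the bicanonical complete intersection $3t = F\cdot Q\cdot S$, hence the quartic elliptic curve $E = Q\cdot S$, the third root $\epsilon = \mathcal O_E(t)(-1)$ by Lemma~2.2, and the class $H = \mathcal O_E(1) \in \Pic^4 E$. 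This produces a point $z = (C,t,Q,[E,\epsilon,H]) \in \mathbb P_c$ with $\phi(z) = (C,\eta)$, so $\phi$ dominates a dense open subset of $\mathcal S^{1/3}_4$, namely the locus $U$ of Theorem~2.3.

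Next I would check injectivity on a dense open set, which is where the real content lies. The key point is that the assignment $(C,\eta)\mapsto z$ just described is the only preimage. I would argue that each coordinate of $z$ is forced: the quadric $Q$ is the unique quadric through $C$ (since $h^0(\mathcal I_{C/\mathbf P^3}(2)) = 1$ for a general genus-$4$ canonical curve), the divisor $t$ is the unique effective element of $\vert\eta^{\otimes 2}\vert$ under Assumption~2.1, the elliptic curve $E$ and its embedding are then determined as $Q\cdot S$ with $S$ the residual quadric in the pencil $\vert \mathcal I_{2t/\mathbf P^3}(2)\vert$ of Lemma~4.1, and finally $\epsilon$ and $H$ are read off from $\mathcal O_E(t)$ and $\mathcal O_E(1)$. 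Thus the map $\psi:(C,\eta)\mapsto z$ is a well-defined rational section of $\phi$, and $\phi\circ\psi = \mathrm{id}$ and $\psi\circ\phi = \mathrm{id}$ on the appropriate dense open sets, giving birationality.

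The main obstacle I anticipate is the subtlety hidden in the fibre dimensions of the tower versus the dimension of $\mathcal S^{1/3}_4$: I must confirm that $\mathbb P_c$ and $\mathcal S^{1/3}_4$ have the same dimension, otherwise $\phi$ cannot be birational even if it is dominant with one-point generic fibres. Here $\mathcal P_{4,1}$ is a rational surface of dimension $2$, the $\mathbf P^1$-bundle $a$ and the two $\mathbf P^3$-bundles forming $b$ together with the $\mathbf P^4$-bundle $c$ add $1 + 3 + 4 = 8$ to the base dimension $2$, giving $\dim\mathbb P_c = 10$; but the Segre-factor contribution in $\mathbb P_b$ means I must count carefully, since $\mathcal S^{1/3}_4$ has dimension $\dim\mathcal M_4 = 9$. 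The resolution is that the moduli map $\phi$ is \emph{not} generically injective on the nose: the fibre of $\phi$ records the choice of $H \in \Pic^4 E$, and rescaling by translation produces a one-dimensional family of isomorphic triples, so the generic fibre of $\phi$ is one-dimensional and $\phi$ realizes $\mathcal S^{1/3}_4$ as a quotient; alternatively the $\Pic^4$ data is redundant once $\mathcal O_E(1)$ is fixed by the embedding. I would therefore verify that after accounting for this automorphism the effective dimensions match at $9$.

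Concretely, I would carry out the steps in the order: (1) recall $\dim\mathcal S^{1/3}_4 = 9$ and the genericity locus $U$; (2) prove dominance by exhibiting $\psi$ on $U$; (3) prove that $Q$, $t$, $E$, $S$, $\epsilon$, and $H$ are each uniquely determined by $(C,\eta)$, invoking Lemma~4.1 for the pencil structure that pins down $E$ and $S$; (4) reconcile dimensions, identifying the precise fibre of $\phi$ and confirming it is generically a point after the natural identifications; and (5) conclude $\phi\circ\psi = \mathrm{id}$ and $\psi\circ\phi = \mathrm{id}$ generically, hence $\phi$ is birational and $\mathcal S^{1/3}_4$ is rational since $\mathbb P_c$ is a tower of projective bundles over the rational base $\mathcal P_{4,1}$. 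The hardest and most error-prone step is (3)–(4): ensuring the reconstruction of $E$ from the pencil of quadrics is canonical and that no extra moduli are silently parametrized by the tower.
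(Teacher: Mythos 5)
Your steps (1), (2) and (5) coincide with the paper's own proof: there, too, dominance and injectivity are established simultaneously by reconstructing $z=(C,t,Q,[E,\epsilon,H])$ from a general $[C,\eta]$ --- $t$ as the unique divisor in $\vert\eta^{\otimes 2}\vert$, $E$ as the smooth base locus of the pencil $\vert\mathcal I_{2t}(2)\vert$ of Lemma 4.1, $H=\mathcal O_E(1)$, $\epsilon=H(-t)$, and $Q$ as the unique quadric of that pencil containing $C$ --- and the uniqueness of this reconstruction is declared to define the two-sided inverse. Up to that point your proposal is the paper's argument, correctly reproduced.

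The gap is in your steps (3)--(4), and it is genuine. You assert in the same proof both that $\psi$ is a two-sided inverse of $\phi$ (so the generic fibre is a single point) and that ``the generic fibre of $\phi$ is one-dimensional and $\phi$ realizes $\mathcal S^{1/3}_4$ as a quotient.'' These are mutually exclusive, and you never decide between them; note that a dominant map with one-dimensional generic fibres from the rational $\mathbb P_c$ would yield only unirationality of the $9$-dimensional $\mathcal S^{1/3}_4$, not the rationality claimed, so your ``resolution'' as written concedes exactly what the theorem denies. What must actually be settled is the reading of the base $\mathcal P_{4,1}$: since any two line bundles of degree $4$ on an elliptic curve differ by a translation, and translations fix $\epsilon\in\Pic^0 E$, the isomorphism class of the \emph{unmarked} triple $(E,\epsilon,H)$ carries only one modulus, and only this unmarked class is determined by $(C,\eta)$. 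Your count $2+1+3+4=10$ uses the marked, two-dimensional reading of $\mathcal P_{4,1}$ (the paper's convention makes every elliptic curve pointed), under which the reconstruction does \emph{not} pin down the $\mathcal P_{4,1}$-coordinate: the marking is a free one-parameter choice invisible to $(C,\eta)$, which is precisely the one-dimensional fibre you detected. A complete write-up must therefore either work with the one-dimensional moduli of unmarked triples, making the tower $9$-dimensional and the fibre of $\phi$ a point as in the paper's reconstruction, or explicitly quotient $\mathbb P_c$ by the residual translation action before claiming birationality. Your remark that ``the $\Pic^4$ data is redundant once $\mathcal O_E(1)$ is fixed by the embedding'' gestures at the correct repair, but nothing in the proposal carries it out; and it is worth noting that the paper's proof contains no dimension discussion at all and silently assumes $[E,\epsilon,H]$ is uniquely reconstructed, so the tension you noticed is a legitimate one to raise --- it simply has to be resolved in one direction, not asserted in both.
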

\begin{proof} At first we show that the map $\phi$ is dominant. Starting with a general point $[C,\eta] \in \mathcal{S}^{1/3}_4$ it is possible to reconstruct a point $z = (C, t, Q, [E, \epsilon, H]) \in \mathbb P_c$
such that $\phi(z) = [C, \eta]$. Indeed $t$ is the unique element of $ \vert \eta^{\otimes 2} \vert$. Then, from the canonical embedding $C \subset \mathbf P^3$, we reconstruct $E$ as the smooth base locus of the pencil of
quadrics $\vert \mathcal I_{2t}(2) \vert$, considered above. Then we have $H := \mathcal O_E(1)$ and $\epsilon := H(-t)$. The quadric $Q$ is the unique quadric of $\vert \mathcal I_{2t/ \mathbf P^3}(2) \vert$ containing
$C$. It is clear that $[C, \eta] = \phi(z)$, with $z = (C, t, Q, [E, \epsilon, H])$. Conversely  the inverse map of $\phi$ is well-defined too. Starting from a general $[C, \eta]$ the point $z$ is indeed uniquely 
reconstructed as above. Hence $\phi^{-1}$ is well defined and $\phi$ is birational. 
\end{proof} 
In the next sections we prove the other rationality results announced in the Introduction.

\section{The rationality of \texorpdfstring{$\mathcal S^-_4$}{the moduli space of odd theta of genus 4}}

We start from an odd spin curve $(C, \eta)$ of genus 4. As in the previous sections, $C$ will be sufficiently general. Thus, passing to its canonical model, we have
$$
C \subset  Q \subset \mathbf P^3,
$$
where $Q = \mathbf P^1 \times \mathbf P^1$ is a smooth quadric and $C$ has bidegree $(3,3)$  in it. Since $\eta$ is odd, there exists a unique $d \in \vert \eta \vert$ and we have
$$
2d = L \cdot C,
$$
where $L$ is a plane section of $Q$ and a conic tritangent to $C$. The condition that both $d$ and $L$ be smooth clearly defines an open set $U \subset  \mathcal S^-_4$. Furthermore it is easily seen that $U \neq \emptyset$. Then, since $\mathcal S^-_4$ is irreducible, the next lemma follows.
 \begin{lemma} 
For a general $C$  both the divisor $d$ and the conic $L$ are smooth. 
\end{lemma}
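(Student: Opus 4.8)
The plan is to make precise the three-line argument indicated just before the statement: smoothness of $d$ and of the tritangent conic $L$ is an open condition on $\mathcal{S}^-_4$, it is realized by at least one odd spin curve, and $\mathcal{S}^-_4$ is irreducible; since an irreducible variety meets every nonempty open subset in a dense set, the general $C$ has both $d$ and $L$ smooth. For the openness, I would work over a parameter space for the data $(Q,C,L,d)$ attached to a general odd spin curve. The locus where $L=H\cap Q$ degenerates to a pair of lines is closed, being the zero locus of the discriminant of the conic $L$, and the locus where two of the three points of $d$ collide is closed as well. Removing these two closed loci yields the open set $U\subset\mathcal{S}^-_4$ on which both $d$ and $L$ are smooth.

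The substantive point is $U\neq\emptyset$, which I would settle by a reverse construction. Fix a plane $\Pi\subset\mathbf{P}^3$, a smooth conic $L\subset\Pi$, and three distinct points $p_1,p_2,p_3\in L$; set $d=p_1+p_2+p_3$. Choose a smooth quadric $Q$ with $Q\cap\Pi=L$: writing the equation of $Q$ as $f+x_3\ell$, where $f=0$ defines $L$ in $\Pi$, the plane $\Pi$ is $x_3=0$, and $\ell$ is an arbitrary linear form, these quadrics form a $3$-dimensional family whose general member is smooth. Then $Q\cong\mathbf{P}^1\times\mathbf{P}^1$ and $L$ is a $(1,1)$-curve on it. On $Q$ I consider the linear system $V\subset|\mathcal{O}_Q(3,3)|$ of curves tangent to $L$ at each $p_i$. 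Since $\dim|\mathcal{O}_Q(3,3)|=15$ and each tangency is two linear conditions, $\dim V\geq 9$. Because $C\cdot L=6$ and tangency forces intersection multiplicity at least $2$ at each $p_i$, every $C\in V$ meets $L$ exactly in $2d=2p_1+2p_2+2p_3$, so $L$ is genuinely tritangent and meets $C$ nowhere else.

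It remains to check that a suitable $C\in V$ gives an odd spin curve lying in $U$. By adjunction a smooth member $C$ has $\omega_C\cong\mathcal{O}_C(1,1)=\mathcal{O}_C(1)$, whence $2d=L\cdot C\in|\omega_C|$ and $\eta:=\mathcal{O}_C(d)$ is a theta characteristic. Since $Q$ is smooth, the two $g^1_3$'s on $C$, say $A$ and $A'$, are distinct and satisfy $A+A'\in|\omega_C|$; were $\eta$ itself a $g^1_3$ we would have $2A\in|\omega_C|$, forcing $A\sim A'$, which is impossible. Hence $h^0(\eta)=1$ and $\eta$ is odd. A general $C\in V$ is smooth away from $p_1,p_2,p_3$ by Bertini, so the only thing to verify directly is smoothness at the three forced base points; granting this, $(C,\eta)$ has $d$ reduced and $L$ smooth and represents a point of $U$. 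Finally, for $r=2$ the odd component $\mathcal{S}^-_4$ is irreducible \cite{cornalba1989moduli}, so the nonempty open set $U$ is dense and the general $C$ lies in it, as claimed.

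I expect the single real obstacle to sit inside the nonemptiness step, namely the smoothness of the constructed tritangent $(3,3)$-curve at the three base points $p_i$: Bertini only guarantees smoothness off the base locus, so one must compute the tangent cone of the generic member at each $p_i$, or exhibit one explicit smooth member realizing the prescribed contact with $L$. Once such an example is in hand, the openness of the condition, the parity computation, and the reduction to the general curve via irreducibility are all formal.
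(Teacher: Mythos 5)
Your skeleton is exactly the paper's: the authors dispose of this lemma in the sentence preceding it --- smoothness of $d$ and $L$ is an open condition on $\mathcal S^-_4$, the set $U$ where it holds is nonempty (``it is easily seen''), and irreducibility of $\mathcal S^-_4$ makes $U$ dense --- so your openness and irreducibility bookkeeping matches the intended proof, and your reverse construction supplies the detail the paper omits. Your parity computation is also correct: $h^0(\eta)\leq 2$ by Clifford, and $h^0(\eta)=2$ would make $\eta$ one of the two $g^1_3$'s cut by the rulings of $Q$, forcing $2A\in|\omega_C|$ and hence $A\sim A'$, impossible when $Q$ is smooth; so $h^0(\eta)=1$ and $\eta$ is odd.

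The one step you explicitly leave open --- smoothness of the general $C\in V$ at the forced base points $p_1,p_2,p_3$ --- is a real hole in the writeup as it stands, but it closes by the very trick this paper uses in the proof of Theorem 2.3 (the reducible member $E+L$ there): it suffices to exhibit one member of $V$ smooth at the $p_i$, and $C_0:=L+D$ with $D\in|\mathcal O_Q(2,2)|$ general does the job. Since $C_0$ contains $L$, the tangency conditions along $L$ are satisfied trivially, so $C_0\in V$; since a general $D$ avoids $p_1,p_2,p_3$, the curve $C_0$ coincides with the smooth conic $L$ in a neighbourhood of each $p_i$ and is therefore smooth there. Smoothness at a fixed point is an open condition on the linear system, so the general member of $V$ is smooth at $p_1,p_2,p_3$, and Bertini gives smoothness away from the base locus. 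One last small point you glossed over: not \emph{every} $C\in V$ meets $L$ exactly in $2d$ --- members containing $L$ do not --- but the sublocus $L+|\mathcal O_Q(2,2)|$ has dimension $8<9\leq\dim V$, so the general member does not contain $L$ and cuts $2d$ on it as you claim. With these two sentences inserted, your proof is complete and coincides in substance with the paper's.
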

Let $o_1, o_2, o_3$ be the three points of $d$. They are not collinear because $h^0(\eta) = 1$. Hence we can fix projective coordinates $(x_0:x_1) \times (y_0:y_1)$ on $\mathbf P^1 \times \mathbf P^1$ so that
$$
o_1 = (1:0) \times (1:0) \ , \ o_2 = (0:1) \times (0:1) \ , \ o_3 = (1:1) \times (1:1).
$$
In particular we can assume that these points are in the diagonal 
$$ 
L := \lbrace x_0y_1 - x_1y_0 = 0 \rbrace
$$
of $\mathbf P^1 \times \mathbf P^1$. Let $\mathcal I_{2d}$ be the ideal sheaf of $2d$ in $\mathbf P^1 \times \mathbf P^1$ and let
$$
I := H^0(\mathcal I_{2d}(3,3)).
$$
We consider the 9-dimensional linear system $ \mathbf P I$. This is endowed with the map
$$
m: \mathbf PI \to \mathcal S^-_4
$$
defined as follows. Let $C \in \mathbf PI$ be smooth, then $m(C) := [C, \eta]$, where $\eta := \mathcal O_C(o_1 + o_2 + o_3)$. It is clear from the construction that $m$ is dominant. Let
\begin{equation}\label{groupG}
\mathbb G \subset \Aut  \mathbf P^1 \times \mathbf P^1
\end{equation}
be the stabilizer of the set $\lbrace o_1 , o_2, o_3 \rbrace$. We have:
\begin{lemma} \label{lemma_gen4}
Assume $C_1, C_2 \in \mathbf PI$ are smooth. Then $m(C_1) = m(C_2)$ if and only if  $C_2 = \alpha(C_1)$ for some $\alpha \in \mathbb G$.
\end{lemma}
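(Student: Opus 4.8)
The plan is to prove the statement in both directions, with the $\emph{only if}$ direction being the substantive one. The $\emph{if}$ direction is essentially automatic: if $C_2 = \alpha(C_1)$ for some $\alpha \in \mathbb G$, then $\alpha$ fixes the set $\{o_1,o_2,o_3\}$ as a set, and since any automorphism of $\mathbf P^1 \times \mathbf P^1$ induces an isomorphism of the abstract curves $C_1 \cong C_2$ carrying the divisor $d_1 = o_1+o_2+o_3$ to the divisor $d_2$, it identifies the pairs $(C_1,\eta_1)$ and $(C_2,\eta_2)$ as abstract spin curves. Hence $m(C_1) = m(C_2)$.

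For the $\emph{only if}$ direction, suppose $m(C_1) = m(C_2)$, so there is an isomorphism of spin curves $\psi: (C_1,\eta_1) \to (C_2,\eta_2)$. The key point is to upgrade this abstract isomorphism to a projective one, i.e. to an element of $\Aut(\mathbf P^1 \times \mathbf P^1)$. First I would observe that since $C$ is general of genus $4$, it is nonhyperelliptic, so the canonical embedding $C \subset \mathbf P^3$ is intrinsic; moreover the quadric $Q$ through the canonical curve is unique because $h^0(\mathcal I_{C/\mathbf P^3}(2)) = 1$ for a curve of type $(3,3)$. Therefore $\psi$ extends to an automorphism of $\mathbf P^3$ preserving $Q$, hence to an element $\alpha \in \Aut Q = \Aut(\mathbf P^1 \times \mathbf P^1)$ with $\alpha(C_1) = C_2$. (Here I should note that $\Aut(\mathbf P^1\times\mathbf P^1)$ has a component exchanging the two rulings, but since $C_1$ has balanced bidegree $(3,3)$ this causes no incompatibility.)

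It remains to check that this $\alpha$ stabilizes the marked set $\{o_1,o_2,o_3\}$, i.e. that $\alpha \in \mathbb G$. This follows from tracking the theta-characteristic: $\psi$ carries $\eta_1 = \mathcal O_{C_1}(o_1^{(1)}+o_2^{(1)}+o_3^{(1)})$ to $\eta_2 = \mathcal O_{C_2}(o_1^{(2)}+o_2^{(2)}+o_3^{(2)})$. Since $\eta$ is an odd theta-characteristic with $h^0(\eta) = 1$, the effective divisor $d$ in $|\eta|$ is $\emph{unique}$; hence $\psi$ must send $d_1$ to $d_2$ as divisors, so $\alpha$ maps $\{o_1^{(1)},o_2^{(1)},o_3^{(1)}\}$ to $\{o_1^{(2)},o_2^{(2)},o_3^{(2)}\}$. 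Because both curves $C_1, C_2$ lie in $\mathbf PI$, both marked triples equal the fixed standard triple $\{o_1,o_2,o_3\}$ in the chosen coordinates; thus $\alpha$ preserves this set and $\alpha \in \mathbb G$.

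The main obstacle I anticipate is the rigidity step: arguing that the abstract isomorphism $\psi$ of spin curves genuinely comes from a projective transformation of the ambient $\mathbf P^3$ and hence of $Q$. This rests on the genericity assumptions (nonhyperellipticity, so the canonical model is well-defined; uniqueness of the quadric $Q$; and $\Aut Q$ acting as the full group of ambient symmetries preserving the type $(3,3)$ linear system). I would make these genericity hypotheses explicit, invoking that for general $C$ the only automorphisms of the canonical curve are the trivial ones, so that the projective realization of $\psi$ is forced and unique up to the finite ambiguity already absorbed into $\mathbb G$. Once this is in place, the bookkeeping with the divisor $d$ and the coordinate normalization is routine.
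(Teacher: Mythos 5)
Your proposal is correct and follows essentially the same route as the paper: both use that $\omega_{C_i} \cong \mathcal O_{C_i}(1,1)$ to promote the abstract spin-curve isomorphism to an automorphism of $\mathbf P^1 \times \mathbf P^1$ (you pass explicitly through $\mathbf P^3$ and the unique quadric, which the paper leaves implicit), and then use $a^*\eta_2 \cong \eta_1$ together with the uniqueness of the effective divisor in $\vert \eta \vert$ to force $\alpha \in \mathbb G$. One small simplification: your genericity hypotheses (nonhyperellipticity, triviality of automorphisms) are unnecessary, since adjunction on $Q$ gives $\omega_{C} \cong \mathcal O_{C}(1,1)$ for \emph{every} smooth member of $\mathbf PI$, which is all the rigidity step needs.
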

\begin{proof} Let $m(C_i) = [C_i, \eta_i]$, $i = 1,2$. If $m(C_1) = m(C_2)$ there exists a biregular map $a: C_2 \to C_1$. Since $\mathcal O_{C_i}(1,1) \cong \omega_{C_i}$, it follows that $a$
induces an isomorphism $a^*: H^0(\mathcal O_{C_1}(1,1)) \to H^0(\mathcal O_{C_2}(1,1))$. This implies that $a$ is induced by some $\alpha \in \Aut  \mathbf P^1 \times \mathbf P^1$.
Furthermore, the
condition $m(C_1) = m(C_2)$ also implies that $a^* \mathcal O_{C_2}(o_1+o_2+o_3) \cong \mathcal O_{C_1}(o_1+o_2+o_3)$.  Hence $\alpha \in \mathbb G$. The converse is obvious. 
\end{proof}
Now observe that $\mathbb G$ acts, in the natural way, on $\mathbf PI$ and that $m: \mathbf PI \to \mathcal S^-_4$ is dominant. Then, as an immediate consequence of the previous lemma, we have
\begin{corollary} 
 $\mathcal S^-_4$ is birational  to the quotient $\mathbf PI / \mathbb G$. 
\end{corollary}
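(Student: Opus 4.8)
The plan is to obtain the birational equivalence as a formal consequence of Lemma~\ref{lemma_gen4}, once we have checked that the dominant rational map $m:\mathbf P I\to\mathcal S^-_4$ descends to the quotient $\mathbf P I/\mathbb G$ and that the descended map is generically injective. Concretely, I would factor $m$ through the quotient map $\pi:\mathbf P I\to\mathbf P I/\mathbb G$ as $m=\bar m\circ\pi$ and then show that the induced rational map $\bar m:\mathbf P I/\mathbb G\to\mathcal S^-_4$ is dominant and one-to-one over a dense open set, hence birational.

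First I would verify that $\mathbb G$ genuinely acts on $\mathbf P I$. Any $\alpha\in\mathbb G$ lies in $\Aut\,\mathbf P^1\times\mathbf P^1$ and therefore preserves the line bundle $\mathcal O(3,3)$; this holds even for the factor-swapping automorphisms, since $\mathcal O(3,3)$ is symmetric in the two rulings. Because $\alpha$ permutes the set $\lbrace o_1,o_2,o_3\rbrace$, it fixes the divisor $2d$ and hence its ideal sheaf $\mathcal I_{2d}$, so $\alpha$ carries $I=H^0(\mathcal I_{2d}(3,3))$ into itself. This yields a linear action of $\mathbb G$ on $I$ and thus an action on $\mathbf P I$. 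Next I would record that $m$ is constant on $\mathbb G$-orbits, which is precisely the easy implication of Lemma~\ref{lemma_gen4}: if $C_2=\alpha(C_1)$ with $\alpha\in\mathbb G$, then $\alpha$ restricts to an isomorphism $C_1\to C_2$ carrying $\mathcal O_{C_1}(o_1+o_2+o_3)$ to $\mathcal O_{C_2}(o_1+o_2+o_3)$, so the marked pairs coincide and $m(C_1)=m(C_2)$. Consequently $m$ factors through $\pi$, producing $\bar m$ as above.

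Finally I would argue that $\bar m$ is birational. It is dominant because $m$ is. For generic injectivity, take two smooth members $C_1,C_2\in\mathbf P I$ with $\bar m(\pi(C_1))=\bar m(\pi(C_2))$, i.e.\ $m(C_1)=m(C_2)$; the nontrivial implication of Lemma~\ref{lemma_gen4} then gives $\alpha\in\mathbb G$ with $C_2=\alpha(C_1)$, so $\pi(C_1)=\pi(C_2)$. Thus $\bar m$ is injective over the dense open locus of smooth curves, and a dominant generically injective rational map in characteristic zero is birational, giving that $\mathcal S^-_4$ is birational to $\mathbf P I/\mathbb G$.

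I do not expect any serious obstacle here, since the statement is an immediate corollary of Lemma~\ref{lemma_gen4}. The only point requiring genuine care is the verification of the $\mathbb G$-action on $\mathbf P I$, namely the handling of the factor-swapping automorphisms and the observation that $\mathbb G$ is finite (the stabilizer of three points in general position), so that $\mathbf P I/\mathbb G$ is a variety of the expected dimension $9=\dim\mathcal S^-_4$, matching the generically finite behaviour of $m$. Everything else is bookkeeping around the two implications already isolated in the lemma.
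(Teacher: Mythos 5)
Your proposal is correct and follows essentially the same route as the paper, which derives the corollary as an immediate consequence of Lemma~\ref{lemma_gen4} together with the observations that $\mathbb G$ acts naturally on $\mathbf PI$ and that $m$ is dominant. Your extra verifications (that $\mathbb G$ preserves $\mathcal I_{2d}(3,3)$, that $m$ descends and is generically injective on the quotient, and that finiteness of $\mathbb G$ plus characteristic zero upgrade this to birationality) are exactly the routine details the paper leaves implicit.
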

Thus the rationality of $\mathcal S^-_4$  follows if $\mathbf PI / \mathbb G$ is rational. In order to prove this, we preliminarily describe the group $\mathbb G$ and its action on $\mathbf PI$.  
We recall that the natural inclusion $\Aut  \mathbf P^1 \times \Aut  \mathbf P^1 \subset \Aut  \mathbf P^1 \times  \mathbf P^1$ induces the exact sequence
$$
0 \to \Aut  \mathbf P^1 \times \Aut  \mathbf P^1 \to \Aut  \mathbf P^1 \times \mathbf P^1 \to \mathbb Z_2 \to 0,
$$
where $\mathbb Z_2$ is generated by the class of the projective involution 
$$
\iota: \mathbf P^1 \times \mathbf P^1 \to \mathbf P^1 \times \mathbf P^1
$$ 
exchanging the factors. From the above exact sequence we have the exact sequence
$$
0 \to \mathbb G_3 \to \mathbb G \to \mathbb Z_2 \to 0.
$$
Here $\mathbb G_3 $ denotes the stabilizer of the set $O := \lbrace o_1 , o_2 , o_3 \rbrace$ in $\Aut  \mathbf P^1 \times \Aut \mathbf P^1$. Since $O$ is a subset of the diagonal $L$, $L$ itself is fixed by $\mathbb G_3$. In particular it follows that $\mathbb G_3$ is the diagonal embedding in $\Aut  \mathbf P^1 \times \Aut  \mathbf P^1$ of the stabilizer of $\lbrace o_1 , o_2 , o_3 \rbrace$ in $\Aut  L$. 
As is very well known, this is a copy of the symmetric group $S_3$. \par 
Now we proceed to an elementary and explicit description of the $\mathbb G$-invariant subspaces of $\mathbf PI$. From it the rationality of $\mathbf PI / \mathbb G$ will follow.
We fix the notation $l $ $:= x_0y_1 - x_1y_0$ for  the equation of the diagonal $L$. Let $$ R = \oplus_{a,b \in \mathbb Z} R_{a,b}$$ be the coordinate ring of $\mathbf P^1 \times \mathbf P^1$, where $R_{a,b}$ is the vector space of forms of bidegree $a, b$. 
We can assume that $\iota^*: R \to R$ is the involution such that $\iota^*x_i = y_i$, $i = 0, 1$. On the other hand let 
$$ 
h_1 := x_0(y_1-y_0) + y_0(x_1-x_0) \ , \ h_2 := x_1(y_0-y_1) + y_1(x_0-x_1) \ , \ h_3 := x_0y_1 + x_1y_0,
$$
so that $\lbrace l, h_1, h_2, h_3 \rbrace$ is a basis of $R_{1,1}$. We can also assume that, for each $\sigma \in \mathbb G_3$, the map $\sigma^*: R \to R$ is such that $\sigma^* l = l$ and $\sigma^*$ permutes the elements
of the set $\lbrace h_1, h_2, h_3 \rbrace$. Then we observe that the eigenspaces of $\iota^*: R_{1, 1} \to R_{1,1}$ are
$$
R_{1,1}^- = < l > \ , \ R_{1,1}^+ = < h_1 , h_2 , h_3 >.
$$ 
This implies that
$$
R_{1,1} = < l > \oplus <h_1 + h_2 + h_3 > \oplus <h_1 - h_3 , h_2 - h_3>
$$
where all the summands are $\mathbb G$-invariant. Considering the multiplication map
$$
\mu: \mathrm{Sym}^2 R_{1,1} \to R_{2,2}
$$
one can check that
$$
\Ker \mu = < h_3^2 - l^2 - (h_1-h_3)(h_2-h_3)>.
$$
Then, putting $h := h_1 + h_2 + h_3$ and $h_{ij}�:= h_i - h_j$, it is easy to deduce that the eigenspaces of $\iota^*: R_{2,2} \to R_{2,2}$ decompose as follows:
$$
R_{2,2}^+ = <h^2> \oplus < hh_{13} \ , \ hh_{23} > \oplus < h^2_{13} \ , \ h^2_{23}> \oplus <h_{13}h_{23} >
$$
and
$$
R_{2,2}^- = < lh > \oplus < lh_{13} \ , \ lh_{23} >,
$$
where each summand appearing above is $\mathbb G$-invariant. Finally, we consider the vector space $I$ and observe that, taking the multiplication by $l$, we
have an injection $$ < l > \otimes R_{2,2} \hookrightarrow I.$$ Its image $l R_{2,2}\subset I$ is a subspace codimension one. Moreover we have 
$$
l R^+_{2,2} \subseteq I^- \ , \ lR^-_{2,2} \subseteq I^+,
$$
where $I^+ , I^-$ are the eigenspaces of $\iota^*: I \to I$. Let us consider 
$$c = x_0x_1(x_0-x_1)+y_0y_1(y_0-y_1).$$
Notice that $c \in I$ and that ${\rm div }(c)$ is $\mathbb G$-invariant. Indeed, 
${\rm div} ( c)$ is the union of the six lines in the quadric $Q = \mathbf P^1 \times \mathbf P^1$ passing through the points $o_1, o_2, o_3$. Notice also that $c$
is not in $lR_{2,2}$, in particular $I = <c> \oplus  lR_{2,2}$. Notice also that $\iota^*c = c$. \par Summing all the previous remarks up, we can finally describe the eigenspaces of $\iota^*: I \to I$ 
and their decompositions as a direct sum of $\mathbb G$-invariant summands.  
\begin{lemma} Let $I^+ , I^-$ be the eigenspaces of $\iota^*: I \to I$, then we have
\begin{itemize} \item[$\circ$]
$I^+  =  < c > \oplus  < l^2h > \oplus < l^2h_{13} \ , \ l^2h_{23} >$.
\item[$\circ$] $I^- =  <lh^2> \oplus < lhh_{13} \ , \ lhh_{23} > \oplus < lh^2_{13} \ , \ lh^2_{23}> \oplus <lh_{13}h_{23} >$,
\end{itemize}
where each summand is an irreducible representation of $\mathbb G$.
\end{lemma}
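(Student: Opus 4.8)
The goal is to decompose the two eigenspaces $I^+$ and $I^-$ of the involution $\iota^*$ acting on $I = H^0(\mathcal I_{2d}(3,3))$ into $\mathbb G$-invariant summands, and to verify that each summand is an irreducible representation of $\mathbb G$. The plan is to exploit the already-established splitting $I = \langle c \rangle \oplus l R_{2,2}$ together with the decomposition of $R_{2,2}$ into its $\iota^*$-eigenspaces $R_{2,2}^{\pm}$, which were recorded just before the statement. Since multiplication by $l$ sends $R_{2,2}^{+}$ into $I^{-}$ and $R_{2,2}^{-}$ into $I^{+}$ (because $\iota^* l = -l$), and since $\iota^* c = c$, I first assemble the raw eigenspace decompositions: $I^{+} = \langle c \rangle \oplus l R_{2,2}^{-}$ and $I^{-} = l R_{2,2}^{+}$. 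Substituting the explicit bases $R_{2,2}^{-} = \langle lh \rangle \oplus \langle lh_{13}, lh_{23}\rangle$ (after an index shift, $\langle h\rangle\oplus\langle h_{13},h_{23}\rangle$) and $R_{2,2}^{+} = \langle h^2\rangle \oplus \langle hh_{13}, hh_{23}\rangle \oplus \langle h_{13}^2, h_{23}^2\rangle \oplus \langle h_{13}h_{23}\rangle$ then yields precisely the displayed summand decompositions for $I^{+}$ and $I^{-}$ after multiplying each generator by $l$.

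Each of the listed summands is already known to be $\mathbb G$-invariant: the $\iota^*$-eigenspace structure handles invariance under the $\mathbb Z_2$ quotient, and invariance under the normal subgroup $\mathbb G_3 \cong S_3$ follows from the fact that $\sigma^* l = l$ for every $\sigma \in \mathbb G_3$ while $\sigma^*$ permutes $\{h_1,h_2,h_3\}$. So the remaining and genuinely substantive task is irreducibility of each summand as a $\mathbb G$-representation. The one-dimensional summands $\langle c\rangle$, $\langle l^2 h\rangle$, $\langle lh^2\rangle$, and $\langle lh_{13}h_{23}\rangle$ are automatically irreducible, so for those I only need to confirm they are genuine $\mathbb G$-subrepresentations, i.e. that the spanning vector is a common eigenvector for both $\mathbb G_3$ and $\iota^*$; this is a direct check using $\iota^* h_i = h_i$, $\iota^* l = -l$, and the $S_3$-action on the $h_i$ (noting that $h = h_1+h_2+h_3$ is $S_3$-fixed while $h_{13}h_{23}$, being up to sign the product of all pairwise differences, transforms by the sign character).

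The two-dimensional summands $\langle l^2 h_{13}, l^2 h_{23}\rangle$, $\langle lhh_{13}, lhh_{23}\rangle$, and $\langle lh_{13}^2, lh_{23}^2\rangle$ are where the real work lies, and I expect this to be the main obstacle. For each, the plan is to compute the explicit matrices of the $S_3$-action in the given basis and show the representation is isomorphic to the standard two-dimensional irreducible representation of $S_3$. The cleanest route is to pass from the basis $\{h_{13}, h_{23}\}$ to the symmetric description: set $h_{12} = h_1 - h_2 = h_{13} - h_{23}$, so that $\{h_{12}, h_{13}, h_{23}\}$ spans a plane (they satisfy one linear relation) on which $S_3$ acts by permuting the indices, which is exactly the standard representation. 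Multiplying through by the $S_3$-fixed factors $l^2$ and $lh$ does not change the isomorphism type, and for $\langle lh_{13}^2, lh_{23}^2\rangle$ I note that the quadratic monomials $h_{ij}^2$ likewise carry the standard representation (the permutation action on $\{h_{12}^2, h_{13}^2, h_{23}^2\}$ decomposes as trivial plus standard, but modulo the linear relation among the $h_{ij}$ the trivial part is absent in the two-dimensional span at hand). Once each summand is identified with a known irreducible $S_3$-module and shown to lie in a single $\iota^*$-eigenspace, irreducibility over $\mathbb G$ follows, completing the proof.

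\begin{proof}
Recall $I = \langle c\rangle \oplus l R_{2,2}$ with $\iota^* c = c$ and $\iota^* l = -l$. Since $\iota^*$ preserves $R_{2,2}$ with eigenspaces $R_{2,2}^{\pm}$, multiplication by $l$ sends $R_{2,2}^{+}$ into $I^{-}$ and $R_{2,2}^{-}$ into $I^{+}$. As $c \in I^+$, we obtain
$$
I^{+} = \langle c\rangle \oplus l R_{2,2}^{-}, \qquad I^{-} = l R_{2,2}^{+}.
$$
Substituting $R_{2,2}^{-} = \langle lh\rangle \oplus \langle lh_{13}, lh_{23}\rangle$ and the displayed basis of $R_{2,2}^{+}$, and multiplying each generator by $l$, gives exactly the claimed decompositions of $I^+$ and $I^-$.

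Each summand is $\mathbb G$-invariant: every summand lies in a single $\iota^*$-eigenspace, and invariance under $\mathbb G_3 \cong S_3$ follows because $\sigma^* l = l$ for $\sigma \in \mathbb G_3$ while $\sigma^*$ permutes $\{h_1,h_2,h_3\}$, so $h = h_1+h_2+h_3$ is fixed and the span $\langle h_{13}, h_{23}\rangle$ is preserved.

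It remains to prove irreducibility over $\mathbb G$. The one-dimensional summands $\langle c\rangle$, $\langle l^2h\rangle$, $\langle lh^2\rangle$, $\langle lh_{13}h_{23}\rangle$ are irreducible, being spanned by common eigenvectors: $h$ and $h^2$ are $S_3$-fixed, $h_{13}h_{23}$ transforms by the sign character of $S_3$, and $c$ is invariant under all of $\mathbb G$ since its divisor is $\mathbb G$-invariant.

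For the two-dimensional summands, write $h_{12} := h_1 - h_2 = h_{13} - h_{23}$, so $h_{12} + h_{23} - h_{13} = 0$ and $\{h_{13}, h_{23}\}$ spans the plane on which $S_3$ acts by permuting the indices of the $h_{ij}$. This is the standard two-dimensional irreducible representation of $S_3$. Multiplying by the $S_3$-fixed factor $l^2$ shows $\langle l^2 h_{13}, l^2 h_{23}\rangle$ is isomorphic to it, and multiplying by $lh$ (also $S_3$-fixed) gives the same for $\langle lhh_{13}, lhh_{23}\rangle$. For $\langle lh_{13}^2, lh_{23}^2\rangle$, the squares $\{h_{13}^2, h_{23}^2, h_{12}^2\}$ are permuted by $S_3$; modulo the linear relation among the $h_{ij}$, their span is two-dimensional and again affords the standard representation, with no trivial subrepresentation, so it is irreducible. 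As each two-dimensional summand lies in a single $\iota^*$-eigenspace and is $S_3$-irreducible, it is irreducible as a $\mathbb G$-representation. This completes the proof.
\end{proof}
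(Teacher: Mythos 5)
Your overall architecture mirrors the paper's implicit proof (the paper offers no separate argument: the lemma is meant to summarize the preceding eigenspace computations), and the parts of your write-up covering the assembly $I^{+}=\langle c\rangle\oplus lR^{-}_{2,2}$, $I^{-}=lR^{+}_{2,2}$, the line $\langle c\rangle$, and the summands $\langle l^{2}h_{13},l^{2}h_{23}\rangle$, $\langle lhh_{13},lhh_{23}\rangle$ are fine modulo the normalization you borrow. But there is a genuine gap exactly where you predicted the ``real work'' lies: your treatment of $\langle lh_{13}^{2},lh_{23}^{2}\rangle$ and $\langle lh_{13}h_{23}\rangle$ is wrong. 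A linear relation among $h_{12},h_{13},h_{23}$ does \emph{not} induce a linear relation among their squares: with the paper's explicit forms one finds $h_{13}=-2x_{0}y_{0}$ and $h_{23}=-2x_{1}y_{1}$, so $h_{13}^{2},\,h_{23}^{2},\,h_{13}h_{23}$ are (up to the factor $4$) the three distinct monomials $x_{0}^{2}y_{0}^{2},\,x_{1}^{2}y_{1}^{2},\,x_{0}x_{1}y_{0}y_{1}$, and $h_{12}^{2}=h_{13}^{2}-2h_{13}h_{23}+h_{23}^{2}$ lies outside $\langle h_{13}^{2},h_{23}^{2}\rangle$. Hence the span of $\{h_{12}^{2},h_{13}^{2},h_{23}^{2}\}$ is three-dimensional, equal to all of $\mathrm{Sym}^{2}\langle h_{13},h_{23}\rangle$, not two-dimensional as you claim. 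Under the very permutation action you posit, the transposition fixing $h_{1}$ sends $h_{13}\mapsto h_{12}$, so $lh_{13}^{2}\mapsto lh_{12}^{2}\notin\langle lh_{13}^{2},lh_{23}^{2}\rangle$: that summand is not even $\mathbb G$-invariant, and your ``no trivial subrepresentation'' remark is vacuous because the span is not a subrepresentation in the first place. The companion claim that $h_{13}h_{23}$ transforms by the sign character is also false: it is the product of only \emph{two} of the three pairwise differences (the sign-equivariant element would be the cubic $h_{12}h_{13}h_{23}$), and the transposition exchanging $o_{1},o_{3}$ sends $h_{13}h_{23}$ to $h_{13}h_{12}=h_{13}^{2}-h_{13}h_{23}$, which is not proportional to it.

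The correct invariant decomposition of $l\,\mathrm{Sym}^{2}\langle h_{13},h_{23}\rangle\cong\mathrm{Sym}^{2}(\mathrm{standard})$ is trivial-type plus standard, with generators such as $\langle l(h_{13}^{2}-h_{13}h_{23}+h_{23}^{2})\rangle\oplus\langle l(h_{13}^{2}-h_{12}^{2}),\,l(h_{23}^{2}-h_{12}^{2})\rangle$; this restores the shape $1\oplus 2$ of the last two summands of $I^{-}$ and everything the paper later uses (the proof of the rationality of $\mathbf P I/\mathbb G$ needs only a decomposition into irreducibles with standard two-dimensional pieces, not these particular generators). In fairness, the defect is inherited from the paper itself: its displayed decomposition of $R^{+}_{2,2}$ has the same non-invariant summands, and its normalization ``$\sigma^{*}l=l$ and $\sigma^{*}$ permutes $\{h_{1},h_{2},h_{3}\}$'' cannot be realized by linear lifts of the geometric $\mathbb G_{3}$ for these explicit $h_{i}$ (a direct computation shows the transposition swapping $o_{1},o_{3}$ acts, under any linear lift and up to an overall sign, by $h_{1}\mapsto -h_{3}$, $h_{3}\mapsto -h_{1}$, $h_{2}\mapsto h_{2}$, $l\mapsto -l$, so $l$ cannot be fixed while the $h_{i}$ are genuinely permuted; one must replace $h_{3}$ by $-h_{3}$ and accept $\sigma^{*}l=\mathrm{sgn}(\sigma)\,l$, a character twist that does not affect which spans are invariant). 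But since your task was a self-contained proof, the step as written fails for the squares and for $h_{13}h_{23}$, and the argument needs the corrected summands above to go through.
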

Now it is straightforward to conclude. For instance let us consider $$ B := \mathbf PI^+ \times \mathbf PI^- $$ and then the variety
$$
\mathbb P := \lbrace (x, p) \in \mathbf PI \times B \ \mid \ x \in \mathbb P_p \rbrace \subset \mathbf PI \times B,
$$
where $p := (p^+, p^-) \in \mathbf P I^+ \times \mathbf P I^-$ and $\mathbb P_p$ denotes the line joining $p^+$ and $p^-$. The variety $\mathbb P$ is endowed with its two natural projections 
$$
\begin{CD}
{\mathbf PI} @<{\beta}<< {\mathbb P} @>{\alpha}>> B. \\
\end{CD}
$$
Note that $\beta: \mathbb P \to \mathbf PI$ is birational, since there exists a unique line $\mathbb P_p$ passing through a point in $\mathbf PI - (\mathbf PI^+ \cup \mathbf PI^-)$. Moreover
$$ \alpha: \mathbb P \to B $$ is a $\mathbf P^1$-bundle structure with fibre $\mathbb P_p$ at the point $p = (p^+, p^-) \in B$. It is also clear that the action of $\mathbb G$ on $\mathbf PI$ induces an action of $\mathbb G$ on $\mathbb P$ and that
$$
\mathbf PI/\mathbb G \cong \mathbb P / \mathbb G.
$$
More precisely, the map $\iota^*$ acts as the identity on $B$, since its two factors are projectivized eigenspaces of $\iota^*$.  Moreover each fibre $\mathbb P_p$ of $\alpha$ is $\iota^*$-invariant. Indeed $\iota^* / \mathbb P_p$ is a projective involution with fixed points $p^+$, $p^-$ on the line $\mathbb P_p$. \par Note that the induced action of $\mathbb G_3$ on $B$ is faithful, since the 2-dimensional summands of $I^{\pm}$ are standard representations
of $S_3$.  Furthermore $\mathbb G_3$ acts linearly on the fibres of $\alpha: \mathbb P \to B$. \par Indeed  consider any $\phi \in \mathbb G_3$ and any $p = (p^+, p^-) \in B$. Then $\phi( \mathbb P_p)$ is the line $\mathbb P_{\phi(p)}$, where $\phi(p) = (\phi(p^+), \phi(p^-))$. In particular the map $\phi / \mathbb P_p \to \mathbb P_{\phi(p)}$ is a projective isomorphism. Let
$$
\overline {\mathbb P} := \mathbb P / \mathbb G_3;
$$
the latter remarks imply that $\alpha: \mathbb P \to B$ descends to a $\mathbf P^1$-bundle 
$$
\overline {\alpha}: \overline {\mathbb P} \to B/\mathbb G_3,
$$
over a non empty open set $U \subset B / \mathbb G_3$. Now let us consider $\iota \in \mathbb G$ and the involution $\iota: \mathbb P \to \mathbb P$ due to the action of $\mathbb G$ on $\mathbb P$. It is clear from the previous construction that $\iota$ descends to an involution
$$
\overline {\iota}: \overline {\mathbb P} \to \overline {\mathbb P}
$$ 
which is fixing each fibre of $\overline {\alpha}$ and acts linearly on it. Passing to the quotient $$ \hat {\mathbb P} := \overline{\mathbb P} / < \overline {\iota} >, $$ it follows that
$\overline {\alpha}$ induces a $\mathbf P^1$-bundle structure $\hat  {\alpha}: \hat{\mathbb P} \to B / \mathbb G_3$. 
\begin{remark} \rm Actually $\hat{\alpha}$ has two natural sections $s^{\pm}: B / \mathbb G_3 \to \hat {\mathbb P}$. They are defined as follows: let $\overline p \in B / \mathbb G$ be the orbit of $p = (p^+, p^-) \in B$.  Then the fixed points of $\overline {\iota}: \overline {\mathbb P}_{\overline p} \to \overline {\mathbb P}_{\overline p}$ are the orbits
$\overline p^+, \overline p^-$ of $p^+, p^-$. Passing to the quotient by $\overline \iota$ they define two distinguished points $\hat p^+, \hat p^- \in \hat {\mathbb P}_{\overline p}$: by definition $\hat p^{\pm} = s^{\pm}(\overline p)$.  \end{remark} 
 \begin{theorem}
The quotient $\mathbb P / \mathbb G$ is rational.
\end{theorem}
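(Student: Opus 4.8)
The plan is to read off the rationality of $\mathbb P/\mathbb G$ from the tower of quotients already built, reducing everything to the rationality of one small quotient $\mathbf P I^+/\mathbb G_3$. First I would record that the exact sequence $0\to\mathbb G_3\to\mathbb G\to\mathbb Z_2\to 0$ splits, with $\iota$ giving a section of order two; hence $\mathbb P/\mathbb G=(\mathbb P/\mathbb G_3)/\langle\overline\iota\rangle=\hat{\mathbb P}$. By construction $\hat\alpha\colon\hat{\mathbb P}\to B/\mathbb G_3$ is a $\mathbf P^1$-bundle, and the Remark provides a section $s^+$. Consequently the generic fibre of $\hat\alpha$ is a $\mathbf P^1$ over $K:=\mathbb C(B/\mathbb G_3)$ carrying a $K$-rational point, so it is $\cong\mathbf P^1_K$ and $\mathbb C(\hat{\mathbb P})=K(t)$ is purely transcendental of degree one over $K$. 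Thus $\hat{\mathbb P}$ is rational as soon as $B/\mathbb G_3$ is, and the whole problem reduces to proving that $B/\mathbb G_3$ is rational.

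Next I would strip off the factor $\mathbf P I^-$ by the no-name lemma. The projection $B=\mathbf P I^+\times\mathbf P I^-\to\mathbf P I^+$ is $\mathbb G_3$-equivariant, and $\mathbb G_3\cong S_3$ acts generically freely on $\mathbf P I^+$ (a generic point $[u_0:u_1:v_1:v_2]$ with $(u_0,u_1)\neq 0$ and $(v_1,v_2)$ generic has trivial stabiliser, since the standard representation on $(v_1,v_2)$ is faithful with generically trivial stabiliser). Viewing $B$ as the projectivisation of the $\mathbb G_3$-linearised trivial bundle $\mathbf P I^+\times I^-$ over $\mathbf P I^+$, the no-name lemma lets this descend, over the free locus, to a Zariski-locally trivial rank-$6$ bundle $\mathcal W\to\mathbf P I^+/\mathbb G_3$. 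Hence $B/\mathbb G_3\cong\mathbf P\mathcal W$ is birational to $(\mathbf P I^+/\mathbb G_3)\times\mathbf P^5$, and it suffices to prove that $\mathbf P I^+/\mathbb G_3$ is rational.

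Finally I would compute the invariant function field of $\mathbf P I^+$ directly. The Lemma decomposes $I^+=\langle c\rangle\oplus\langle l^2h\rangle\oplus\langle l^2h_{13},\,l^2h_{23}\rangle$, where the first two summands are $\mathbb G_3$-fixed and the last is the standard two-dimensional representation of $S_3$. Dehomogenising by the fixed coordinate $c$ gives $\mathbb C(\mathbf P I^+)=\mathbb C(s)(V_1,V_2)$, with $s:=(l^2h)/c$ fixed by $\mathbb G_3$ and $(V_1,V_2)$ carrying the standard representation linearly over $\mathbb C(s)$. Since $S_3$ acts on $\langle V_1,V_2\rangle$ as the Weyl group of type $A_2$, i.e.\ as a complex reflection group, Chevalley--Shephard--Todd gives a polynomial invariant ring $\mathbb C(s)[V_1,V_2]^{S_3}=\mathbb C(s)[P_2,P_3]$ in two basic invariants of degrees $2$ and $3$. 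Passing to fraction fields, $\mathbb C(\mathbf P I^+)^{\mathbb G_3}=\mathbb C(s,P_2,P_3)$ is purely transcendental of degree three over $\mathbb C$, so $\mathbf P I^+/\mathbb G_3$ is rational. Combining the three steps yields the rationality of $\mathbb P/\mathbb G$.

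The genuinely new input is confined to the last step: recognising the standard summand of $I^+$ as a reflection representation, which makes $\mathbf P I^+/\mathbb G_3$ rational by invariant theory. The two preceding reductions are formal, the only points needing care being the presence of the section $s^+$ (to guarantee the generic fibre of $\hat\alpha$ carries a rational point and is therefore $\mathbf P^1$ rather than a pointless conic) and the generic freeness of the $S_3$-action on $\mathbf P I^+$ (to legitimise the no-name lemma).
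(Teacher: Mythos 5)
Your proposal is correct and follows the paper's proof essentially step for step: reduce via the $\mathbf P^1$-bundle $\hat\alpha\colon\hat{\mathbb P}\to B/\mathbb G_3$ (where the paper invokes the sections of the preceding Remark to get $\mathbb P/\mathbb G\cong B/\mathbb G_3\times\mathbf P^1$, just as you use $s^+$ to rule out a pointless conic), then regard $B/\mathbb G_3$ as a $\mathbf P^5$-bundle over $\mathbf PI^+/\mathbb G_3$, and finally deduce the rationality of $\mathbf PI^+/\mathbb G_3$ from the decomposition of $I^+$ into irreducible $\mathbb G_3$-representations. The only difference is that you spell out what the paper dismisses as ``standard'' — generic freeness of the $S_3$-action to legitimize the no-name descent, and Chevalley--Shephard--Todd applied to the standard summand — which is a faithful filling-in of the same argument rather than a different route.
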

\begin{proof} Since $\mathbb P / \mathbb G \cong \hat{\mathbb P}$ and $\hat \alpha: \hat {\mathbb P} \to B / \mathbb G_3$ is a $\mathbf P^1$-bundle, the preceeding remarks imply that $\mathbb P / \mathbb G \cong B / \mathbb G_3 \times \mathbf P^1$. Hence it remains to show the rationality of $B / \mathbb G_3$. This is now straightforward: we have
$B = \mathbf PI^+ \times \mathbf PI^-$ and  $\mathbb G_3$ acts linearly on both factors. Considering $B$ as the trivial projective bundle over $\mathbf PI^+$, it follows that $B / \mathbb G_3$ is a $\mathbf P^5$-bundle over $\mathbf PI^+ / \mathbb G_3$. The rationality of $\mathbf PI^+ / \mathbb G_3$ is a standard property. Since $\mathbf PI^+ = \mathbf P^3$, it is easily proven considering the decomposition of $I^+$  as a sum of irreducible representations of $\mathbb G_3$. Hence $B / \mathbb G_3$ is rational.  \end{proof}
We have already proved that $\mathcal S^-_4$ is birational to $\mathbb P / \mathbb G$. Hence it follows:
\begin{corollary}
The moduli space of odd spin curves of genus $4$ is rational. 
\end{corollary}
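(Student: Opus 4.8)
The plan is to chain together the birational identifications assembled in this section and invoke the rationality just established. Since rationality is a birational invariant, it suffices to produce a birational equivalence between $\mathcal{S}^-_4$ and a variety already known to be rational, and the natural candidate is $\mathbb{P}/\mathbb{G}$, whose rationality is the content of the preceding theorem.

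First I would recall the corollary asserting that $\mathcal{S}^-_4$ is birational to the quotient $\mathbf{P}I/\mathbb{G}$; this follows from the dominance of the moduli map $m\colon \mathbf{P}I \to \mathcal{S}^-_4$ together with Lemma \ref{lemma_gen4}, which identifies the fibres of $m$ on smooth members with the $\mathbb{G}$-orbits. Next I would transfer from $\mathbf{P}I/\mathbb{G}$ to $\mathbb{P}/\mathbb{G}$: the projection $\beta\colon \mathbb{P} \to \mathbf{P}I$ is birational and, by construction, $\mathbb{G}$-equivariant, since the action of $\mathbb{G}$ on $\mathbb{P}$ is precisely the one induced from its action on $\mathbf{P}I$. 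An equivariant birational map descends to a birational map of quotients, giving $\mathbf{P}I/\mathbb{G} \cong \mathbb{P}/\mathbb{G}$. Finally I would apply the theorem just proved, namely that $\mathbb{P}/\mathbb{G}$ is rational, and compose the three steps to obtain a birational equivalence of $\mathcal{S}^-_4$ with a rational variety.

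The real substance is entirely contained in the preceding theorem, where the rationality of $\mathbb{P}/\mathbb{G}$ is reduced, through the two $\mathbf{P}^1$-bundle structures over $B/\mathbb{G}_3$, to the rationality of $B/\mathbb{G}_3$ and ultimately of $\mathbf{P}I^+/\mathbb{G}_3$. At the level of this corollary there is no genuine obstacle; the only point requiring a word of care is the $\mathbb{G}$-equivariance of $\beta$, which guarantees that the birational maps descend compatibly to the quotients.
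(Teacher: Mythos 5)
Your proposal is correct and matches the paper's argument exactly: the paper likewise chains the corollary identifying $\mathcal S^-_4$ with $\mathbf PI/\mathbb G$, the $\mathbb G$-equivariant birational map $\beta\colon \mathbb P \to \mathbf PI$ giving $\mathbf PI/\mathbb G \cong \mathbb P/\mathbb G$, and the preceding theorem on the rationality of $\mathbb P/\mathbb G$. Your extra remark on the equivariance of $\beta$ is the same point the paper treats as clear from the construction.
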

 \section{The rationality of \texorpdfstring{$\mathcal S^{1/4-}_3$}{the moduli space of odd 4-roots of genus 3}}
The rationality result to be proven in this section naturally relies on the geometry of odd spin curves of genus $4$ considered above. To see this relation let us fix from now on a
general curve $C$ of genus three and two distinct points $n_1, n_2 \in C$. As is well known, the line bundle $\omega_C(n_1+n_2)$ defines a morphism $\phi: C \to \mathbf P^3$ such that
$$ C_n := \phi(C) \subset Q \subset \mathbf P^3. $$
Here $Q := \mathbf P^1 \times \mathbf P^1$ is a smooth quadric and the unique quadric through $C_n$.   Moreover $C_n$ is a curve of bidegree $(3,3)$ in $\mathbf P^1 \times \mathbf P^1$ with exactly one node $n := \phi(n_1) = \phi(n_2)$, see \cite{green1986projective}. Let $R_1, R_2$ be the lines in $Q$ containing $n$. Then the pull-back by $\phi: C \to Q$ of the divisor $R_i$ is $$ n_1+n_2+m_i, \ i=1,2 $$ 
where $n_1+n_2+m_1+m_2$ is the canonical divisor of $C$ containing the points $n_1$ and $n_2$. Moreover $\vert n_1 + n_2 + m_i \vert$ is the pencil $\vert \omega_C(-m_j) \vert$, where $j \neq i$.
The condition that $\mathcal O_C(n_1+n_2)$ is a theta characteristic is reflected by the projective model $C_n$ as follows:
 \begin{lemma} 
Let $R_1$ and $R_2$ be the two lines of $Q$ passing through the node $n$. Then the following conditions are equivalent:
\begin{enumerate}
\item $R_1$, $R_2$ are tangent to the branches of $C_n$ at $n$.
\item $\mathcal O_C(n_1+n_2)$ is a theta characteristic.
\end{enumerate}
\end{lemma}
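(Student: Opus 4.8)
The plan is to reduce both conditions to statements about the four points $n_1,n_2,m_1,m_2$ on $C$ and then match them. First I would record the arithmetic content of condition (2): since $\mathcal{O}_C(n_1+n_2)$ has degree $2$, it is a theta characteristic exactly when $2(n_1+n_2)\sim K_C$. Because we already know that $n_1+n_2+m_1+m_2\sim K_C$, subtracting shows that condition (2) is equivalent to the single linear equivalence
$$
n_1+n_2\sim m_1+m_2
$$
on $C$. Thus condition (2) is purely a statement about the four points.

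Next I would use the genericity of $C$. A general genus-three curve is non-hyperelliptic, hence carries no $g^1_2$, so $h^0(\mathcal{O}_C(n_1+n_2))=1$ and the class of $n_1+n_2$ contains a \emph{unique} effective divisor. Consequently the linear equivalence $n_1+n_2\sim m_1+m_2$ upgrades to the equality of effective divisors $n_1+n_2=m_1+m_2$, that is, $\{m_1,m_2\}=\{n_1,n_2\}$. So condition (2) holds if and only if each residual point $m_i$ is one of $n_1,n_2$. This passage from linear equivalence to literal equality is exactly where non-hyperellipticity is used, and it is what makes the converse implication work.

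It remains to read off condition (1) from the position of the $m_i$. By definition $m_i$ is the residual intersection $\phi^{*}R_i-n_1-n_2$, so the multiplicity of $\phi^{*}R_i$ at $n_j$ is the local intersection multiplicity of the ruling $R_i$ with the branch of $C_n$ through the image of $n_j$. Working in affine coordinates at the node, in which the two rulings are the coordinate lines, I would check that $R_i$ meets a branch transversally exactly when this multiplicity equals $1$ and is tangent to it exactly when the multiplicity is $\geq 2$. Since $R_i$ passes through the node, both $n_1$ and $n_2$ occur in $\phi^{*}R_i$ with multiplicity $\geq 1$, and the total degree is $3$; hence the multiplicity vector at $(n_1,n_2)$ is $(1,1)$, $(2,1)$ or $(1,2)$. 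The first case ($m_i\notin\{n_1,n_2\}$) is transversality to both branches, while $(2,1)$ and $(1,2)$ give $m_i=n_1$ and $m_i=n_2$, i.e. tangency to the branch through $n_1$, respectively $n_2$. As $R_1,R_2$ are distinct rulings and the two branches have distinct tangents, no ruling can be tangent to both branches; therefore "$R_1$ and $R_2$ are both tangent to the branches" is equivalent to $m_1,m_2\in\{n_1,n_2\}$ with $m_1\neq m_2$, i.e. to $\{m_1,m_2\}=\{n_1,n_2\}$. Chaining the three reductions yields the equivalence of (1) and (2).

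The only delicate step is the local analysis at the node: I must verify that the intersection multiplicity of a ruling with a smooth branch jumps from $1$ to $2$ precisely under tangency, and that the degree-three constraint (together with $R_i$ already passing through $n$) forces the residual degree-one piece $m_i$ to collapse onto $n_1$ or $n_2$ exactly in the tangential case. The bookkeeping also rules out contact order $\geq 3$ automatically, since the second branch contributes at least multiplicity $1$ and the total is only $3$, so no separate flex discussion is needed.
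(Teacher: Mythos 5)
Your argument is correct, and it takes a genuinely different route from the paper's, most visibly in the direction $(ii)\Rightarrow(i)$. The paper proves $(i)\Rightarrow(ii)$ by the one-line computation $\phi^*(R_1+R_2)=3n_1+3n_2\in\vert\omega_C(n_1+n_2)\vert$, hence $2n_1+2n_2$ canonical; for the converse it \emph{reconstructs the rulings}: if $2n_1+2n_2\sim K_C$, Riemann--Roch makes each $\vert 2n_i+n_j\vert$ a pencil, the product of the two associated maps is identified with $\phi$ itself, and therefore $\phi^*R_i=2n_i+n_j$ up to reindexing, which is exactly the tangency statement. You instead run both directions through the residual points of the setup identity $\phi^*R_i=n_1+n_2+m_i$: condition $(ii)$ becomes $n_1+n_2\sim m_1+m_2$, non-hyperellipticity of the general $C$ (no $g^1_2$, so a degree-two class contains at most one effective divisor) upgrades this linear equivalence to the equality $\lbrace m_1,m_2\rbrace=\lbrace n_1,n_2\rbrace$, and your local multiplicity count (each branch contributes $\geq 1$ to $\phi^*R_i$, total degree $3$, so the vector at $(n_1,n_2)$ is $(1,1)$, $(2,1)$ or $(1,2)$) translates that equality into tangency. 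What your route buys: it bypasses the paper's identification $\phi=\phi_1\times\phi_2$, which is asserted rather than proved and tacitly requires the four products of sections of the two pencils to span $H^0(\omega_C(n_1+n_2))$; your degree-three bound also disposes of higher contact orders for free, and the observation that two distinct rulings cannot both be tangent to one smooth branch correctly closes the one loophole ($m_1=m_2$) in passing from ``each $m_i\in\lbrace n_1,n_2\rbrace$'' to the divisor equality --- a point the paper never needs to confront. What it costs: an explicit appeal to genericity through non-hyperellipticity, whereas the paper's converse uses genericity only via the ambient setup; since the lemma is stated for the general $C$ fixed at the start of the section, nothing is lost.
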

\begin{proof} (i) $\Rightarrow$ (ii): Since $R_1 + R_2 \in \vert \mathcal O_Q(1) \vert$ it follows $\phi^*(R_1 + R_2) = 3n_1 + 3n_2 \in \vert \omega_C(n_1 + n_2) \vert$.  Hence
$2n_1 + 2n_2$ is a canonical divisor and $\mathcal O_C(n_1 + n_2)$ is a theta characteristic.  (ii) $\Rightarrow$ (i):  Since $2n_1 + 2n_2$ is a canonical divisor then 
$ \vert 2n_i + n_j \vert$ is a pencil, $j \neq i$. Let $\phi_i: C \to \mathbf P^1$ be the map defined by $\vert 2n_i+n_j\vert$, then $\phi: C \to Q$ is the product map $\phi_1 \times \phi_2$. 
Therefore, up to reindexing, we have $2n_i + n_j = \phi^*R_i$. Hence $R_i$ is tangent to a branch of $n$.
   \end{proof}  Assume now that $[C, \eta]$ is a general point of $\mathcal S^{1 / 4 -}_3$ so that $\eta^{\otimes 2}$ is an odd theta characteristic on $C$. This is equivalent to say that there
exist two distinct points $n_1, n_2 \in C$ such that $$ \mathcal O_C(n_1 + n_2) \cong \eta^{\otimes 2} \ {\rm and } \ \eta^{\otimes 6} \cong \omega_C(n_1+n_2) \cong \mathcal O_C(3n_1+3n_2).$$
Considering the morphism $\phi$ defined by $\eta^{\otimes 6}$, we have as above that its image 
$$
C_n \subset Q \subset \mathbf P^3
$$
is a curve with exactly one node $n = \phi(n_1) = \phi(n_2)$ and no other singular point. Now we observe that the linear system $\vert \eta^{\otimes 6} \vert$ contains the two distinct elements:
\begin{itemize}
\item[$\circ$] $3n_1 + 3n_2$, where $n_1 + n_2 \in \vert \eta^{\otimes 2} \vert$,
\item[$\circ$] $2o_1 + 2o_2 + 2o_3$, where $o_1 + o_2 + o_3 \in \vert \eta^{\otimes 3} \vert$.
\end{itemize}
\begin{lemma} One has $h^0(\eta) = 0$, so that $h^0(\mathcal O_C(o_1+o_2+o_3)) = 1$.   \end{lemma}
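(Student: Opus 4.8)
The plan is to reduce the second assertion to the first via Riemann--Roch and Serre duality, and to establish $h^0(\eta)=0$ by a short argument exploiting that $\eta^{\otimes 2}$ is an odd theta characteristic whose unique section vanishes at the \emph{distinct} points $n_1 \neq n_2$. First I would record the numerology: since $\eta^{\otimes 4} \cong \omega_C$ and $\deg \omega_C = 2g-2 = 4$, the root $\eta$ has degree $1$, while $\eta^{\otimes 2}$ has degree $2$ and $\eta^{\otimes 3}$ has degree $3$. Note also that $(\eta^{\otimes 2})^{\otimes 2} \cong \eta^{\otimes 4} \cong \omega_C$, so $\eta^{\otimes 2}$ is indeed a theta characteristic, and by hypothesis it is the odd one.

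To prove $h^0(\eta)=0$ I would argue by contradiction. Suppose $h^0(\eta) \geq 1$; since $\deg \eta = 1$, this forces $\eta \cong \mathcal{O}_C(p)$ for a single point $p \in C$, whence $\eta^{\otimes 2} \cong \mathcal{O}_C(2p)$. On the other hand, as $\eta^{\otimes 2}$ is an odd theta characteristic on a general curve, one has $h^0(\eta^{\otimes 2}) = 1$, so $|\eta^{\otimes 2}|$ has a unique effective divisor, which is $n_1 + n_2$ with $n_1 \neq n_2$. Comparing the two descriptions of this unique divisor yields $2p = n_1 + n_2$, forcing $n_1 = n_2 = p$ and contradicting the distinctness of $n_1$ and $n_2$. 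Hence $\eta$ is not effective and $h^0(\eta)=0$.

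For the second statement I would apply Riemann--Roch to the degree-$3$ bundle $\eta^{\otimes 3}$, obtaining $h^0(\eta^{\otimes 3}) - h^1(\eta^{\otimes 3}) = 3 - g + 1 = 1$. By Serre duality $h^1(\eta^{\otimes 3}) = h^0(\omega_C \otimes \eta^{\otimes -3})$, and since $\omega_C \cong \eta^{\otimes 4}$ we have $\omega_C \otimes \eta^{\otimes -3} \cong \eta$. Therefore $h^1(\eta^{\otimes 3}) = h^0(\eta) = 0$ by the first part, giving $h^0(\eta^{\otimes 3}) = 1$; as $o_1 + o_2 + o_3 \in |\eta^{\otimes 3}|$, this is exactly $h^0(\mathcal{O}_C(o_1+o_2+o_3)) = 1$. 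The only nonformal ingredient, and hence the real (if mild) obstacle, is the input that the unique section of $\eta^{\otimes 2}$ is reduced, i.e. $n_1 \neq n_2$; this is precisely where the genericity of $(C,\eta)$ in $\mathcal{S}^{1/4-}_3$ is used, together with the standard fact that a general odd theta characteristic in genus $3$ has $h^0 = 1$ with reduced divisor. Everything else is purely numerical.
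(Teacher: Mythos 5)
Your proposal is correct, and its second half is exactly the paper's argument: the paper likewise observes $\omega_C(-o_1-o_2-o_3) \cong \eta$ and concludes $h^0(\mathcal O_C(o_1+o_2+o_3)) = 1$ by Riemann--Roch once $h^0(\eta)=0$ is known. The first half reaches the contradiction by a mildly different route. The paper notes that $\eta \cong \mathcal O_C(p)$ forces $4p \in \vert \omega_C \vert$, which is impossible on a general curve of genus $3$ (on the quartic model this is the absence of hyperflexes), so the genericity of $C$ alone suffices. You instead square to get $\eta^{\otimes 2} \cong \mathcal O_C(2p)$ and play this against the reducedness of the unique divisor $n_1+n_2$ of the odd theta characteristic $\eta^{\otimes 2}$. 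That is legitimate, since $n_1 \neq n_2$ is part of the standing generality hypotheses established before the lemma (it is needed there for the nodal model $C_n$), and your side remarks are sound ($h^0(\eta^{\otimes 2})=1$ is automatic for an odd theta of degree $2$ in genus $3$, oddness plus the degree bound leaving no alternative). But note that the two genericity inputs are the same fact in different clothes: with $h^0(\eta^{\otimes 2})=1$, the relation $2p \sim \eta^{\otimes 2}$ holds iff $n_1 = n_2 = p$, iff $4p \sim \omega_C$. So the paper's phrasing has the small advantage of being independent of the reducedness assumption --- indeed it can be used to \emph{prove} that $n_1 \neq n_2$ holds for general $[C,\eta] \in \mathcal S^{1/4-}_3$ --- whereas your argument offloads exactly that point to the setup; as you correctly flag, that is the only nonformal ingredient either way.
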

\begin{proof} If $h^0(\eta) \geq 1$ then $\eta \cong \mathcal O_C(p)$ for some point $p \in C$. But then $4p \in \vert \omega_C \vert$, which is impossible on a general $C$ of genus $3$. Now observe that $\omega_C(-o_1-o_2-o_3) \cong \eta$. Since $h^0(\eta) = 0$ it follows $h^0(\mathcal O_C(o_1+o_2+o_3)) = 1$ by Riemann-Roch. \end{proof}
\begin{lemma} The points $o_1, o_2, o_3$ are distinct and $\lbrace o_1 \ o_2 \ o_3 \rbrace \cap \lbrace n_1 \ n_2 \rbrace = \emptyset $.
Moreover one has $2o_1 + 2o_2 + 2o_3 = L \cdot C_n$ where $L \in \vert \mathcal O_Q(1) \vert$ is  smooth.
\end{lemma}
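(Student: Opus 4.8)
The plan is to produce the conic $L$ directly from the linear system $|\eta^{\otimes 6}|$, and then to establish the three remaining assertions (distinctness of the $o_i$, disjointness from the node, and smoothness of $L$) by an openness-plus-irreducibility argument, the possible reducibility of $L$ being the delicate point.

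First I would construct $L$. By the previous lemma $h^0(\mathcal{O}_C(o_1+o_2+o_3))=1$, so $o_1+o_2+o_3$ is the unique effective divisor in $|\eta^{\otimes 3}|$ and $2o_1+2o_2+2o_3$ is a well-defined member of $|\eta^{\otimes 6}|$. Since $\phi$ is defined by the complete linear system $|\eta^{\otimes 6}|$, the restriction map $H^0(\mathbf P^3,\mathcal{O}(1))\to H^0(C,\eta^{\otimes 6})$ is an isomorphism, so every divisor of $|\eta^{\otimes 6}|$ equals $\phi^{*}H$ for a unique plane $H$. Applied to $2o_1+2o_2+2o_3$ this yields a plane $H_0$; as $C_n$ lies on $Q$, the section of $\mathcal{O}_{\mathbf P^3}(1)$ cutting $H_0$ restricts on $Q$ to a member $L:=H_0\cap Q\in|\mathcal{O}_Q(1)|$ with $\phi^{*}L=2o_1+2o_2+2o_3$. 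This is the required identity $2o_1+2o_2+2o_3=L\cdot C_n$; once the $\phi(o_i)$ are known to be three distinct smooth points of $C_n$, the even local multiplicities force $L$ to be tritangent to $C_n$ there.

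Next I would observe that all three remaining assertions are open conditions on the irreducible space $\mathcal{S}^{1/4-}_3$, so it suffices to check that their failure loci are proper, equivalently to exhibit one $[C,\eta]$ satisfying them. Some degeneracies are excluded outright by rigidity: if $o_1+o_2+o_3$ coincided with a fibre of either ruling it would move in that pencil, contradicting $h^0(\mathcal{O}_C(o_1+o_2+o_3))=1$; in particular the three $o_i$ cannot all lie on a single fibre. The coincidence of two of the $o_i$, or of some $o_i$ with $n_1$ or $n_2$, admits no such linear-equivalence obstruction, and I would dispose of these by the genericity principle above.

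The main obstacle is the smoothness of $L$. A plane section of $Q$ is singular precisely when the plane is tangent to $Q$, in which case $L=R_a+R_b$ is the union of the two rulings through a point $q\in Q$. A short intersection-number count shows $\phi^{*}R_a$ and $\phi^{*}R_b$ each have degree $3$ and sum to $2o_1+2o_2+2o_3$; comparing supports forces $q=\phi(o_1)\in C_n$ and, after relabelling, $\phi^{*}R_a=o_1+2o_2$, $\phi^{*}R_b=o_1+2o_3$, i.e. the linear conditions $o_1+2o_2\sim\omega_C(-m_2)$ and $o_1+2o_3\sim\omega_C(-m_1)$. The difficulty is that these impose no contradiction at the level of linear equivalence: combining them (together with $o_1+o_2+o_3\sim\eta^{\otimes 3}\sim\omega_C\otimes\eta^{-1}$) only reproduces $m_1+m_2\sim n_1+n_2$, which already follows from the theta-characteristic relation $2(n_1+n_2)\sim\omega_C\sim n_1+n_2+m_1+m_2$. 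Hence these conditions genuinely carve out a proper closed subset rather than an empty one, so the exclusion of reducible $L$ cannot be purely formal and must rest on the irreducibility of $\mathcal{S}^{1/4-}_3$ together with one explicit smooth example — for instance a one-nodal $(3,3)$ curve carrying a smooth tritangent conic, obtained by specializing the smooth tritangent-conic construction of the genus $4$ section. Granting such an example, the four stated properties hold on a dense open set $U\subset\mathcal{S}^{1/4-}_3$.
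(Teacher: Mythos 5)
Your overall strategy is the paper's: the proof there begins exactly with ``it suffices to produce one pair $(C,\eta)$ satisfying the statement,'' i.e.\ openness of the four conditions plus irreducibility of $\mathcal S^{1/4-}_3$ reduce everything to one explicit example. Your preliminary steps are sound --- the construction of $L$ from the complete system $\vert \eta^{\otimes 6}\vert$ is correct, and your analysis of the reducible case $L=R_a+R_b$ (leading to conditions like $o_1+2o_2\sim \omega_C(-m_2)$ that cannot be contradicted by linear equivalence alone) correctly identifies why a genericity argument anchored by an example is unavoidable. But then you stop: ``Granting such an example\dots'' concedes precisely the step that constitutes the entire content of the paper's proof. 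A reduction to an unproduced example is a genuine gap, not a proof.

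Moreover, the example you gesture at is defective as specified. For a one-nodal $(3,3)$ curve $C_n\subset Q$ with normalization $\nu:C\to C_n$, adjunction gives $\nu^*\mathcal O_Q(1)\cong \omega_C(n_1+n_2)$, so a smooth tritangent conic with $\nu^*L=2o_1+2o_2+2o_3$ yields $\eta:=\mathcal O_C(o_1+o_2+o_3-n_1-n_2)$ with $\eta^{\otimes 2}\cong \omega_C(-n_1-n_2)$; this satisfies $\eta^{\otimes 4}\cong\omega_C$ \emph{if and only if} $\mathcal O_C(n_1+n_2)$ is a theta characteristic, equivalently (by the first lemma of this section) if and only if the two rulings of $Q$ through the node are tangent to the two branches of $C_n$. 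A generic one-nodal specialization of the genus-$4$ family does not satisfy these two extra tangency conditions, so your candidate pair would not even lie in $\mathcal S^{1/4-}_3$; and once those conditions are imposed, the existence of such a curve that is additionally irreducible, smooth away from $n$, and tritangent to a \emph{smooth} $L$ at three distinct points off the node is exactly the nontrivial assertion to be checked. The paper sidesteps the nodal model entirely and builds the example in the canonical plane model: fix five general points $o_1,o_2,o_3,n_1,n_2\in\mathbf P^2$, let $L$ be the conic through them, and take a general quartic $C$ tangent to $L$ at $o_1,o_2,o_3$ and tangent to the line $\langle n_1,n_2\rangle$ at $n_1,n_2$; a general such $C$ is smooth, the line tangency makes $\mathcal O_C(n_1+n_2)$ an odd theta characteristic, and since $n_1,n_2\in L$ the full intersection is $L\cdot C=2o_1+2o_2+2o_3+n_1+n_2$, which forces $\eta^{\otimes 2}\cong\mathcal O_C(n_1+n_2)$ and hence $\eta^{\otimes 4}\cong\omega_C$. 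Supplying this (or an equivalent) construction is what is needed to close your argument.
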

\begin{proof} It suffices to produce one pair $(C, \eta)$ satisfying the statement. Fix in $\mathbf P^2$ five general points $o_1, o_2, o_3, n_1, n_2$ and let $L$ be the conic through them. Consider the linear system $\Sigma$ of all quartics $C$ which are tangent to $L$ at $o_1, o_2, o_3$ and tangent to the line $<n_1 ,  n_2>$ at $n_1, n_2$. It is easy to check that the general $C \in \Sigma$ is smooth. Let $\eta = \mathcal O_C(o_1 + o_2 + o_3 - n_1 - n_2)$, then $(C, \eta)$ satisfies the statement. \end{proof}
\begin{remark} \rm As above let $\mathcal O_C(n_1 + n_2)$ be an odd theta characteristic and let $C_n \subset \mathbf P^3$ be the image of the map defined by $\vert \omega_C(n_1+n_2) \vert$. It follows from the previous discussion that there exists a bijection between the set of square roots $\eta$ of $\mathcal O_C(n_1+n_2)$ and the set of tritangent planes $P$ to $C_n - \lbrace n \rbrace$. This bijection associates to $P$ the line bundle $\eta = \mathcal O_C(o_1 + o_2 + o_3 - n_1 - n_2)$, where $P \cdot C_n = 2o_1 + 2o_2 + 2o_3$.
\end{remark}
 To prove the rationality result of this section we  proceed as in the previous one. We fix coordinates $(x_0:x_1) \times (y_0:y_1)$ on $Q$ so that $o_1 = (1:0, 1:0), o_2 = (0:1, 0:1)$ and $ o_3 = (1:1, 1:1)$. Then we observe that the diagonal $L = \lbrace x_0y_1 - x_1y_0 \rbrace$ is tritangent to the the previous curve $C_n$ at $o_1, o_2, o_3$ and that $n \in Q - L$.
Keeping the notations of the previous section we consider the linear system $\mathbf PI$. $C_n$ is in the family of the singular elements of $\mathbf PI$. Let $$ U := Q - L, $$ for each
$n \in U$ we consider the 4-dimensional linear system
 $$
\mathbb D_n \subset \vert \mathcal O_Q(3,3) \vert
$$
of all curves $D$ of bidegree $(3,3)$ such that: \begin{enumerate} \item \it $2o_1 + 2o_2 + 2o_3 \subset L \cdot D$, \item  $D$ has multiplicity $\geq 2$ at $n$, \item $R_i \cdot D = 3n$ for $i =1, 2$, where $R_1$ and $R_2$ are the
lines of $Q$ through $n$. \end{enumerate} 
(i) implies the inclusion $\mathbb D_n \subset \mathbf PI$. We consider the incidence correspondence
$$
\mathbb D := \lbrace (D,n) \in \mathbf PI \times U \ \mid \ D \in \mathbb D_n \rbrace
$$
together with its two projection maps
$$
\begin{CD}
{\mathbf PI } @<{\pi_1}<< {\mathbb D} @>{\pi_2}>> U \\
\end{CD}
$$
$\mathbb D$ is a $\mathbf P^4$-bundle via the map $\pi_2: \mathbb D \to U$. On the other hand the closure of $\pi_1(\mathbb D)$ is the locus of singular elements of 
$\mathbf PI$. Now we define a rational map
$$
m: \mathbb D \to \mathcal S^{1 / 4 -}_3
$$
as follows. Let $C_n \in \mathbb D_n$ be nodal with exactly one node $n$, so that  its normalization $\nu: C \to C_n$ is of genus $3$. Let $\eta := \nu^*\mathcal O_C(o_1+o_2+o_3 - \nu^*n)$, by definition $$ m(C_n) := [C, \eta]. $$ Note that the group $\mathbb G$, defined as in the previous section,  acts on $\mathbb D$ in the natural way. The action of $\alpha \in \mathbb G$ on $\mathbb D$ is the isomorphism
$f_{\alpha}: \mathbb D \to \mathbb D$ sending $(D,n) \in \mathbb D$ to $(\alpha(D), \alpha(n))$. The proof of the next lemma is completely analogous to the proof 
of Lemma \ref{lemma_gen4} and hence we omit it. The corollary is immediate. \begin{lemma} 
Let $D_1, D_2 \in \mathbb F$. Then $m(D_1) = m(D_2)$ iff there exists $\alpha \in \mathbb G$ such that $\alpha(D_1) = \alpha(D_2)$. 
\end{lemma}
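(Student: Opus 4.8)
The plan is to transplant, almost word for word, the proof of Lemma \ref{lemma_gen4}, replacing the smooth canonical model used there by the nodal model cut out by $|\eta^{\otimes 6}|$ and keeping careful track of the node. I read the asserted equivalence as: $m(D_1)=m(D_2)$ if and only if $D_2=\alpha(D_1)$ for some $\alpha\in\mathbb G$. The converse (``if'') direction is immediate and I would dispose of it first: if $\alpha\in\mathbb G$ and $D_2=\alpha(D_1)$, then $\alpha$ restricts to an isomorphism of the genus-$3$ normalizations which carries the divisor $o_1+o_2+o_3$ and the node to themselves, hence it identifies the two spin structures $\eta=\mathcal O_C(o_1+o_2+o_3-n_1-n_2)$, so $m(D_1)=m(D_2)$. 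The whole content therefore lies in the forward implication.

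First I would reconstruct, purely from the isomorphism class $[C,\eta]$, every piece of projective data attached to a point of $\mathbb D$. The points $n_1,n_2$ are the support of the unique effective divisor of $|\eta^{\otimes 2}|$ (an odd theta characteristic, so $h^0=1$); the points $o_1,o_2,o_3$ are the support of the unique effective divisor of $|\eta^{\otimes 3}|=|\omega_C\otimes\eta^{-1}|$, which has $h^0=1$ by the lemma giving $h^0(\eta)=0$; and the morphism $\phi\colon C\to\mathbf P^3$ is the one attached to the complete, base-point-free linear system $|\eta^{\otimes 6}|=|\omega_C(n_1+n_2)|$ (complete because $h^1(\omega_C(n_1+n_2))=h^0(\mathcal O_C(-n_1-n_2))=0$). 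Thus $C_n=\phi(C)$, its node $n=\phi(n_1)=\phi(n_2)$, and all the marked points are intrinsic to $(C,\eta)$, pinned down up to the projective automorphisms of $\mathbf P^3$ that carry the construction to itself.

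Next, assuming $m(D_1)=m(D_2)$, I would write $m(D_i)=[C^{(i)},\eta^{(i)}]$ and fix an isomorphism $a\colon C^{(2)}\to C^{(1)}$ with $a^*\eta^{(1)}\cong\eta^{(2)}$, whence $a^*(\eta^{(1)})^{\otimes k}\cong(\eta^{(2)})^{\otimes k}$ for every $k$. For $k=6$ the induced isomorphism $H^0((\eta^{(1)})^{\otimes 6})\to H^0((\eta^{(2)})^{\otimes 6})$ shows that $a$ descends to a projective isomorphism $\alpha\colon\mathbf P^3\to\mathbf P^3$ with $\phi^{(1)}\circ a=\alpha\circ\phi^{(2)}$, so $\alpha(C_n^{(2)})=C_n^{(1)}$. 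Since $Q$ is the unique quadric through each $C_n^{(i)}$, $\alpha$ preserves $Q$ and hence $\alpha\in\Aut(\mathbf P^1\times\mathbf P^1)$. Because the node is the unique singular point of $C_n^{(i)}$, $\alpha$ sends $n^{(2)}$ to $n^{(1)}$; and $a^*(\eta^{(1)})^{\otimes 3}\cong(\eta^{(2)})^{\otimes 3}$ forces $a$ to match the divisors $o_1+o_2+o_3$, so $\alpha$ permutes the set $\{o_1,o_2,o_3\}$. As $\mathbb G$ is by definition the stabilizer of this set (and automatically fixes the diagonal $L$, the unique $(1,1)$-conic through the three points), we get $\alpha\in\mathbb G$ and $\alpha(C_n^{(2)})=C_n^{(1)}$; replacing $\alpha$ by $\alpha^{-1}\in\mathbb G$ gives $\alpha(D_1)=D_2$, as required.

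The one delicate point, and the only genuine departure from Lemma \ref{lemma_gen4}, is that $C_n$ is singular, so the argument cannot be carried out on an embedded smooth model as in the genus-$4$ case. The remedy is to work abstractly on the normalization $C$ and to exploit the completeness of $|\eta^{\otimes 6}|$ to make $\phi$ — and with it the node and all marked points — intrinsic to $(C,\eta)$; one must then upgrade $\alpha$ from an automorphism of $\mathbf P^3$ fixing $Q$ to an element of $\mathbb G$, which is exactly what the compatibility with $|\eta^{\otimes 3}|$ delivers. All remaining verifications are routine and run parallel to the genus-$4$ computation, which is why the paper omits them.
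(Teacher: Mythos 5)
Your proof is correct and takes exactly the route the paper intends: the paper omits this proof, saying only that it is completely analogous to that of Lemma \ref{lemma_gen4}, and your write-up is a faithful execution of that analogy, with the one genuinely new ingredient (reconstructing the node $n$, the points $o_1,o_2,o_3$, and the map $\phi$ intrinsically from $(C,\eta)$ via the unique divisors in $\vert \eta^{\otimes 2}\vert$ and $\vert \eta^{\otimes 3}\vert$ and the complete system $\vert \eta^{\otimes 6}\vert = \vert \omega_C(n_1+n_2)\vert$) handled correctly. The descent of the abstract isomorphism to $\alpha\in\Aut(\mathbf P^1\times\mathbf P^1)$ through the uniqueness of the quadric $Q$, and the upgrade to $\alpha\in\mathbb G$ via the stabilized set $\lbrace o_1,o_2,o_3\rbrace$, match the paper's argument in the genus-$4$ case point for point.
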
 
\begin{corollary}\label{cor:gen3}
The quotient $\mathbb D / \mathbb G$ is birational to $\mathcal S^{ 1 / 4 -}_3$. 
\end{corollary}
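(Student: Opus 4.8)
The plan is to reproduce, in the present setting, the argument of Section 5 that identified $\mathcal S^-_4$ with $\mathbf PI/\mathbb G$, now with $\mathbf PI$ replaced by the incidence variety $\mathbb D$. Two ingredients suffice: first, dominance of the moduli map $m\colon\mathbb D\to\mathcal S^{1/4-}_3$; second, the description of its generic fibre, which is provided by the lemma immediately preceding the corollary (the analogue of Lemma \ref{lemma_gen4}). Granting those, the corollary becomes a formal consequence: a dominant, generically finite map whose fibres are the orbits of a finite group acting on the source descends to a birational map of the quotient onto the target.

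First I would establish that $m$ is dominant by reconstructing a general $(D,n)\in\mathbb D$ from its image. Starting from a general $[C,\eta]\in\mathcal S^{1/4-}_3$, the divisor $n_1+n_2$ is the unique member of $\vert\eta^{\otimes 2}\vert$ (an odd theta characteristic), while $o_1+o_2+o_3$ is the unique member of $\vert\eta^{\otimes 3}\vert$ by the lemma asserting $h^0(\eta)=0$. The complete system $\vert\eta^{\otimes 6}\vert=\vert\omega_C(n_1+n_2)\vert$ has degree $6$ and dimension $3$, hence defines the morphism $\phi$ whose image $C_n\subset Q$ is a $(3,3)$ curve with the single node $n=\phi(n_1)=\phi(n_2)$; by the tritangency lemma the plane section through $o_1,o_2,o_3$ cuts $2o_1+2o_2+2o_3$ on $C_n$, and the two rulings $R_1,R_2$ through $n$ are tangent to the branches, so $R_i\cdot C_n=3n$. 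After normalising coordinates so that $o_1,o_2,o_3$ and $L$ occupy their standard positions, the pair $(C_n,n)$ satisfies conditions (i)--(iii) and therefore lies in $\mathbb D$, with $m(C_n,n)=[C,\eta]$. This proves dominance; since $\dim\mathbb D=\dim U+4=6=\dim\mathcal S^{1/4-}_3$, the map $m$ is generically finite.

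Next, $\mathbb G$ is finite, sitting in the extension $0\to\mathbb G_3\to\mathbb G\to\mathbb Z_2\to 0$ with $\mathbb G_3\cong S_3$, it acts on $\mathbb D$ through $f_\alpha\colon(D,n)\mapsto(\alpha(D),\alpha(n))$, and $m$ is constant on its orbits because an $\alpha\in\mathbb G$ induces an isomorphism of the reconstructed pairs. By the lemma preceding the corollary, $m(D_1)=m(D_2)$ precisely when $D_2=\alpha(D_1)$ for some $\alpha\in\mathbb G$; equivalently, the generic fibre of $m$ is a single $\mathbb G$-orbit. Hence $m$ factors through a morphism $\overline m\colon\mathbb D/\mathbb G\to\mathcal S^{1/4-}_3$ which is dominant and generically injective between irreducible varieties of the same dimension $6$; over $\mathbb C$ this forces $\overline m$ to be birational. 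The only genuinely substantive step is the reconstruction used for dominance, and even there the essential geometric content (the normalisation of coordinates and the identity $R_i\cdot C_n=3n$) has already been supplied by the earlier lemmas, so the remaining work is purely bookkeeping.
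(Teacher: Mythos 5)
Your proposal is correct and follows the paper's route exactly: the paper declares the corollary ``immediate'' from the preceding lemma (the analogue of Lemma \ref{lemma_gen4}) together with the dominance of $m$, which is exactly the reconstruction of $(C_n,n)\in\mathbb D$ from a general $[C,\eta]$ that you carry out. You merely make explicit the bookkeeping (dimension count, finiteness of $\mathbb G$, descent of $m$ to a generically injective dominant map on the quotient) that the paper leaves unsaid.
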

Finally we can deduce that
\begin{theorem} $\mathcal S^{ 1 / 4-}_3$ is rational. \end{theorem}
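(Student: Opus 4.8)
The plan is to invoke Corollary~\ref{cor:gen3}, which identifies $\mathcal{S}^{1/4-}_3$ birationally with the quotient $\mathbb{D}/\mathbb{G}$, and then to prove that $\mathbb{D}/\mathbb{G}$ is rational, exactly in the spirit of the previous section. The structural input I would use is that $\pi_2\colon \mathbb{D}\to U$ is a $\mathbb{G}$-equivariant $\mathbf{P}^4$-bundle: the action $f_\alpha(D,n)=(\alpha(D),\alpha(n))$ satisfies $\pi_2\circ f_\alpha=\alpha\circ\pi_2$, where $\alpha$ on the right is the induced automorphism of $U$. Moreover this bundle is genuinely the projectivisation of a linearised vector bundle. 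Indeed the fibre $\mathbb{D}_n=\mathbf{P}\widetilde{\mathbb{D}}_n$ is cut out of $\mathbf{P}I$ by the $\mathbb{G}$-equivariant conditions (i)--(iii), so the spans $\widetilde{\mathbb{D}}_n\subset I$ form a rank $5$ subbundle $\mathcal{W}\subset I\times U$ of the trivial bundle, and the linear action of $\mathbb{G}$ on $I$ (given by the maps $\iota^*$ and $\sigma^*$) carries $\widetilde{\mathbb{D}}_n$ onto $\widetilde{\mathbb{D}}_{\alpha(n)}$. Thus $\mathcal{W}$ is $\mathbb{G}$-linearised and $\mathbb{D}=\mathbf{P}\mathcal{W}$.

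With this in place I would apply the no-name method. It requires that $\mathbb{G}$ act generically freely on the base $U=Q-L$. Since $\mathbb{G}$ is the finite group sitting in $0\to\mathbb{G}_3\to\mathbb{G}\to\mathbb{Z}_2\to 0$ with $\mathbb{G}_3\cong S_3$, and it acts faithfully on $Q=\mathbf{P}^1\times\mathbf{P}^1$, every nontrivial element has fixed locus a proper, hence at most one-dimensional, closed subset of $Q$: the diagonal elements of $\mathbb{G}_3$ fix only finitely many points, $\iota$ fixes precisely the diagonal $L$, which has been removed, and the remaining elements fix at most a curve. Hence the points of the surface $U$ with nontrivial stabiliser form a proper closed subset, and the action is generically free. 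The no-name method then provides a $\mathbb{G}$-equivariant birational trivialisation of $\mathcal{W}$, giving that $\mathbb{D}/\mathbb{G}=\mathbf{P}\mathcal{W}/\mathbb{G}$ is birational to $(U/\mathbb{G})\times\mathbf{P}^4$.

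It remains to see that the base quotient $U/\mathbb{G}$ is rational. As $L$ is $\mathbb{G}$-invariant, $U$ is a dense $\mathbb{G}$-stable open subset of $Q$, so $U/\mathbb{G}$ is birational to $Q/\mathbb{G}$. The latter is the quotient of the rational surface $\mathbf{P}^1\times\mathbf{P}^1$ by a finite group; it is therefore unirational, and by Castelnuovo's criterion in characteristic zero it is rational. Combining this with the previous paragraph, $\mathbb{D}/\mathbb{G}$ is rational, and by Corollary~\ref{cor:gen3} so is $\mathcal{S}^{1/4-}_3$.

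I expect the only real obstacle to lie in justifying that the no-name method applies, that is, in the two checks above: that $\mathbb{D}\to U$ is the projectivisation of a $\mathbb{G}$-linearised vector bundle (so that the fibrewise action is linear, not merely projective), and that the induced action of $\mathbb{G}$ on the surface $U$ is generically free. Once these are settled, the rationality of $Q/\mathbb{G}$ by Castelnuovo and the final assembly are routine.
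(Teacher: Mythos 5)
Your proposal is correct and takes essentially the same route as the paper: after invoking Corollary~\ref{cor:gen3}, the paper likewise observes that $\mathbb G$ acts faithfully and linearly between the fibres of $\pi_2: \mathbb D \to U$, descends it to a $\mathbf P^4$-bundle over $U/\mathbb G$, and concludes since $U/\mathbb G$ is a unirational, hence rational, surface. Your explicit verification of the $\mathbb G$-linearised rank-$5$ subbundle $\mathcal W \subset I \times U$ and of generic freeness on $U$ simply fills in the details the paper dismisses as ``easy to see''.
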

\begin{proof}  It is easy to see, and it follows from the analysis of the previous section on the action of $\mathbb G$ on $\mathbf PI$, that the action of $\mathbb G$ on $\mathbb D$ is faithful and linear between the fibres of $\mathbb D$. Hence the $\mathbf P^4$-bundle $\pi_2: \mathbb D \to U$ descends to a $\mathbf P^4$-bundle $\overline {\mathbb D} \to U / \mathbb G$,
which is just $\mathbb D / \mathbb G$.  But $U/\mathbb G$ is rational, since it is a unirational surface, therefore $\overline {\mathbb D} = \mathbb D / \mathbb G$ is rational. Then, by the previous corollary, $\mathcal S^{ 1 / 4-}_3$ is rational.
\end{proof}

\section{The rationality of \texorpdfstring{$\mathcal{S}^{1/4+}_3$}{the moduli space of even 4-roots of genus 3}}

Let us recall that, for any smooth curve $C$ and any divisor $e$ of degree two on it, the line bundle $\omega_C(e)$ is very ample iff $h^0 (\mathcal O_C(e)) = 0$. Let $C$ be a general curve of genus 3 and let $\eta$ be any 4-th root of $\omega_C$. Then $\eta^{\otimes 2}$ is an even theta characteristic. We have considered the case where $\eta^{\otimes 2}$ is odd in the previous section. \par
From now on we assume that $[C, \eta]$ is in $\mathcal S^{1 / 4 +}$, so that $h^0(\eta^{\otimes 2}) = 0$. Then the line bundle $\omega_C \otimes \eta^{\otimes 2}$ is very ample and moreover it defines an embedding
of $C$ in $\mathbf P^3$ as a projectively normal curve whose ideal is generated by cubics, see \cite[\S 6.3]{dolgachev2010topics}. Obviously no quadric contains $C$ and we cannot argue as in the previous section. 
Though the beautiful geometry 
of cubic surfaces through $C$ can be used, it is simpler to consider the canonical model of $C$. Hence we assume that $C$ is embedded in $\mathbf P^2$ as a general plane quartic.
\begin{lemma} \ \par  \begin{enumerate} \it
\item One has $h^0(\eta^{\otimes 3}) = 1$. Moreover, the unique divisor of $\vert \eta^{\otimes 3} \vert$ is supported on three distinct points $o_1, o_2, o_3$. 
\item There exists  exactly one cubic $E$  such  that $C \cdot E =4(o_1 + o_2 + o_3)$. Moreover $E$ is smooth with general moduli.
\end{enumerate}
\end{lemma}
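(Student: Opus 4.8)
The plan is to prove the cohomological assertions---$h^0(\eta^{\otimes 3})=1$ and the existence and uniqueness of $E$---directly for every general pair, and then to treat the remaining statements (distinctness of the $o_i$, smoothness and general moduli of $E$) as open conditions, verifying them on a single example after invoking the irreducibility of the even component $\mathcal S^{1/4+}_3$.

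First I would establish $h^0(\eta^{\otimes 3})=1$. Since $\deg\eta^{\otimes 3}=3=g$, Riemann--Roch gives $h^0(\eta^{\otimes 3})=1+h^1(\eta^{\otimes 3})$, and Serre duality together with $\eta^{\otimes 4}\cong\omega_C$ identifies $h^1(\eta^{\otimes 3})=h^0(\omega_C\otimes\eta^{\otimes -3})=h^0(\eta)$. Thus it suffices to see that $h^0(\eta)=0$: were $\eta\cong\mathcal O_C(p)$ effective, then $4p\in\vert\omega_C\vert$ would exhibit $p$ as a hyperflex of the plane quartic $C$, which does not occur for general $C$. Hence $h^0(\eta^{\otimes 3})=1$ and there is a unique effective $D:=o_1+o_2+o_3\in\vert\eta^{\otimes 3}\vert$.

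For part (2) I would argue purely cohomologically. From $\mathcal O_C(D)\cong\eta^{\otimes 3}$ and $\eta^{\otimes 4}\cong\omega_C\cong\mathcal O_C(1)$ one gets $\mathcal O_C(4D)\cong\eta^{\otimes 12}\cong\omega_C^{\otimes 3}\cong\mathcal O_C(3)$, so $4D$ is cut on $C$ by some cubic. Restricting from $\mathbf P^2$ through the ideal sequence $0\to\mathcal O_{\mathbf P^2}(-1)\to\mathcal O_{\mathbf P^2}(3)\to\mathcal O_C(3)\to 0$ and using $H^*(\mathcal O_{\mathbf P^2}(-1))=0$, the restriction $H^0(\mathcal O_{\mathbf P^2}(3))\to H^0(\mathcal O_C(3))$ is an isomorphism; and since $h^0(\mathcal O_C(3)(-4D))=h^0(\mathcal O_C)=1$, the section of $\mathcal O_C(3)$ vanishing along $4D$ is unique up to scalar. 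It therefore lifts to a unique cubic $E$ with $C\cdot E=4D$, proving existence and uniqueness. To see that the $o_i$ are distinct and that $E$ is smooth with general moduli I would run the construction backwards: starting from a general smooth plane cubic $E$ and a suitable nontrivial class $\xi\in\Pic^0 E$ with $\xi^{\otimes 4}\cong\mathcal O_E$, a general $D=o_1+o_2+o_3$ in the base-point-free degree-$3$ system $\vert\mathcal O_E(1)\otimes\xi\vert$ consists of three distinct points; because $\xi^{\otimes 4}\cong\mathcal O_E$ we have $4D\in\vert\mathcal O_E(4)\vert$, so the quartics cutting $4D$ on $E$ form a nonempty linear system, and for a smooth member $C$ the line bundle $\eta:=\omega_C(-D)$ is a $4$-th root of $\omega_C$ from which the chosen $E$ is reconstructed. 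A parameter count ($9$ for $E$, $2$ for $D$, $3$ for $C$, less $8$ for $\mathrm{PGL}_3$, giving $6=\dim\mathcal S^{1/4+}_3$) shows this reverse map is dominant, which is precisely the assertion that $E$ has general moduli.

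The main obstacle is the smoothness of the quartic $C$ at the three osculation points $o_1,o_2,o_3$: every member of the relevant linear system is forced to meet $E$ there to order four, so these are base points and Bertini's theorem does not apply directly; one must either exhibit one explicit smooth member or analyze the length-four curvilinear schemes $4o_i\subset E$ to confirm that the general $C$ remains smooth there. A second delicate point is the choice of $\xi$, which has to be pinned down so that $\eta^{\otimes 2}$ is an \emph{even} theta characteristic, i.e.\ $h^0(\eta^{\otimes 2})=0$, placing the example in $\mathcal S^{1/4+}_3$ rather than in $\mathcal S^{1/4-}_3$.
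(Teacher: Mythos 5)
Your proposal is correct and follows essentially the same skeleton as the paper's proof: part (1) via Riemann--Roch and Serre duality, reducing to $h^0(\eta)=0$, which is excluded because $\eta\cong\mathcal O_C(p)$ would force $4p\in\vert\omega_C\vert$, a hyperflex on the general quartic; and the remaining assertions (distinctness of the $o_i$, smoothness and general moduli of $E$) treated as open conditions on the irreducible space $\mathcal S^{1/4+}_3$, verified on a single example built from a smooth cubic $E$, a suitable torsion class, and a smooth quartic through the length-$12$ scheme $4(o_1+o_2+o_3)$. The one genuine difference is that you prove existence and uniqueness of $E$ directly and unconditionally---via $\eta^{\otimes 12}\cong\omega_C^{\otimes 3}$, the isomorphism $H^0(\mathcal O_{\mathbf P^2}(3))\to H^0(\mathcal O_C(3))$, and $h^0(\mathcal O_C(3)(-4D))=h^0(\mathcal O_C)=1$---whereas the paper folds existence and uniqueness into the ``produce one pair, then use openness'' step; your version is more explicit and lightens the openness bookkeeping. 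As for the two obstacles you flag: the first (smoothness of a member of $\vert\mathcal I_{4d}(4)\vert$ at the forced base points) is precisely what the paper's citation of the proof of Theorem 2.3 resolves---take the reducible member $E+\ell$ with $\ell$ a general line, which is smooth at $o_1,o_2,o_3$ since $E$ is and $\ell$ avoids them; smoothness at each base point is an open condition in the linear system, and Bertini handles the complement of the base locus. The second (ensuring $\eta^{\otimes 2}$ is even, so that the example actually lies in the component $\mathcal S^{1/4+}_3$ to which the irreducibility argument is applied) is in fact also left implicit in the paper, whose Lemma 7.2 only proves the converse implication; it closes along the line you indicate: choose the $4$-torsion class with nontrivial square, and if $h^0(\eta^{\otimes 2})\geq 1$ a conic $B$ would cut $C$ doubly at each $o_i$, hence---since $C$ and $E$ share tangent directions there---cut $E$ in $2(o_1+o_2+o_3)$, trivializing the square of the torsion class, a contradiction. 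So your flags are exactly the right ones, and both gaps close as you anticipate.
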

\begin{proof}  
We have $h^0(\eta^{\otimes 3}) \geq 2$ iff $h^0(\omega_C \otimes \eta^{-\otimes 3}) = 1$. This implies that $\omega_C \otimes \eta^{-\otimes 3} \cong \mathcal O_C(p)$,
for some point $p \in C$ such that $4p \in \vert \omega_C \vert$.  But then $C$ is not a general curve. 
To complete the proof of (1) and to prove (2) it suffices to construct a pair $(C, \eta)$ with the required properties. Starting from a smooth cubic $E$ consider three distinct
non collinear points $o_1, o_2, o_3$ such that $b := 4(o_1+o_2+o_3) \in \vert \mathcal O_E(4) \vert$. It is standard to check that the linear system of plane quartics with
base locus $b$ contains a smooth element $C$: see the analogous argument in the proof of theorem 2.3. Let $\eta := \omega_C(-o_1-o_2-o_3)$, then $(C, \eta)$ is the
required pair. 
\end{proof}
Furthermore let $H := \mathcal O_E(1)$ and, as above, $4(o_1+o_2+o_3) = E \cdot C$. Let
$$
\epsilon : = H(-o_1-o_2-o_3).
$$
Clearly $\epsilon$ is a 4-th root of $\mathcal O_E$. Moreover:
\begin{lemma}  
The line bundle $\epsilon^{\otimes 2}$ is not trivial.
\end{lemma}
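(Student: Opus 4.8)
The plan is to argue by contradiction, exploiting that $\eta^{\otimes 2}$ is an \emph{even} theta characteristic, so that $h^0(\eta^{\otimes 2})=0$ on our general $C$. Suppose $\epsilon^{\otimes 2}\cong\mathcal O_E$. Since $\epsilon=H(-o_1-o_2-o_3)$ with $H=\mathcal O_E(1)$, this says $\mathcal O_E(2o_1+2o_2+2o_3)\cong\mathcal O_E(2)$, i.e. the degree-$6$ divisor $2o_1+2o_2+2o_3$ lies in the complete linear system $\vert\mathcal O_E(2)\vert$ of conic sections of $E$. Hence there is a plane conic $K$ with
$$ K\cdot E = 2o_1+2o_2+2o_3. $$
I would then record the local geometry at the $o_i$: each $o_i$ is a smooth point of both $E$ and $C$, and from $C\cdot E=4(o_1+o_2+o_3)$ the curves $C$ and $E$ meet there with multiplicity $4$, hence share the same tangent line at each $o_i$.

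First I would treat the case where $K$ is smooth at each $o_i$. Then $(K\cdot E)_{o_i}=2$ forces $K$ to be tangent to $E$, hence to $C$, at $o_i$, so that $(K\cdot C)_{o_i}\ge 2$ and $K\cdot C\ge 2o_1+2o_2+2o_3$. As $\deg(K\cdot C)=8$, the residual divisor $R:=K\cdot C-2(o_1+o_2+o_3)$ is effective of degree $2$. Using $\mathcal O_C(1)\cong\omega_C$, $\eta^{\otimes 4}\cong\omega_C$, and $\mathcal O_C(o_1+o_2+o_3)\cong\eta^{\otimes 3}$ (as $o_1+o_2+o_3\in\vert\eta^{\otimes 3}\vert$), I would compute
$$ \mathcal O_C(R)\cong\mathcal O_C(2)\otimes\mathcal O_C(-2(o_1+o_2+o_3))\cong\omega_C^{\otimes 2}\otimes\eta^{\otimes -6}\cong\eta^{\otimes 2}. $$
Thus $R$ is an effective divisor in $\vert\eta^{\otimes 2}\vert$, forcing $h^0(\eta^{\otimes 2})\ge 1$, a contradiction.

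The step I expect to require the most care is the degenerate case in which $K$ is a pair of lines $\ell_1\cup\ell_2$, so that one of the $o_i$ is the node of $K$ and there $K$ is not tangent to $C$ in the naive sense. I would handle this by a short case analysis of the partitions of $2o_1+2o_2+2o_3$ into two degree-$3$ line sections $\ell_1\cdot E$, $\ell_2\cdot E$: since $o_1,o_2,o_3$ are non-collinear, no line meets all three, which rules out the partition $(o_1+o_2+o_3)+(o_1+o_2+o_3)$ and the double line, leaving (up to relabelling) only $\ell_1\cdot E=2o_1+o_2$, $\ell_2\cdot E=o_2+2o_3$, with node at $o_2$. At $o_1$ and $o_3$ the relevant line is tangent to $E$, hence to $C$, giving $(K\cdot C)_{o_i}\ge 2$ as before; at the node $o_2$ both lines pass through $o_2$, so $(K\cdot C)_{o_2}\ge 1+1=2$ as well. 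Therefore $K\cdot C\ge 2(o_1+o_2+o_3)$ in all cases, the residual computation of the previous paragraph goes through unchanged, and the contradiction with $h^0(\eta^{\otimes 2})=0$ completes the proof.
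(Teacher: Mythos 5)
Your proof is correct and takes essentially the same route as the paper, whose (two-line) argument is exactly this: assuming $\epsilon^{\otimes 2}\cong\mathcal O_E$ produces a conic $B$ with $B\cdot E=2o_1+2o_2+2o_3$, whence $h^0(\eta^{\otimes 2})=h^0(\mathcal O_C(B-2o_1-2o_2-2o_3))=1$, making $\eta^{\otimes 2}$ an odd theta characteristic, a contradiction. You merely make explicit the steps the paper leaves implicit --- the transfer of tangency from $E$ to $C$ at each $o_i$ via $(C\cdot E)_{o_i}=4$, and the case analysis when the conic degenerates to a line pair (using the non-collinearity of $o_1,o_2,o_3$, which follows since collinearity would force $\eta\cong\mathcal O_C(p)$ and hence $h^0(\eta^{\otimes 2})\geq 1$) --- and these verifications are accurate.
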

\begin{proof} 
Assume $\epsilon^{\otimes 2}$ is trivial. Then it follows $2o_1 + 2o_2 + 2o_3 = B \cdot E$, where $B$ is a conic. 
This implies that $h^0(\eta^{\otimes 2}) = h^0(\mathcal O_C(B - 2o_1 - 2o_2 - 2o_3)) = 1$. Hence $\eta^{\otimes 2}$ is an 
odd theta: a contradiction.
\end{proof}
Let $d := o_1 + o_2 + o_3 \in \vert H \otimes \epsilon^{-1} \vert$ be general, it follows from lemma 7.1 and its proof that the linear system
$$
\vert \mathcal I_{4d}(4) \vert
$$
defines a 3-dimensional family of smooth genus 3 spin curves  $(D, \eta_D)$ of order 4, such that $\eta^{\otimes 2}_D$ is an even theta characteristic. Such 
a family is the family of pairs $(D, \eta_D)$ such that $D$ is a smooth element of $\vert \mathcal I_{4d}(4) \vert$ and $\eta_D = \omega_D(-d)$. \par 
Note that the curves $D$ are general in moduli. Since  $\mathcal S^{ 1 / 4 +}_3$ is irreducible and dominates $\mathcal M_3$, it follows that a dense open set of it is 
filled up by points $[D, \eta_D]$ realized as above. We can now use these remarks 
to prove that  $\mathcal S^{1 / 4 +}_3$ is birational to a suitable
tower of projective bundles over a rational modular curve. \par To this purpose we consider the moduli space $\mathcal T$ of triples $(E, H, \tau)$ such that $E$ is an elliptic curve, that is a genus $1$ curve $1$-pointed by $o$, $H = \mathcal O_E(3o)$ and $\tau \in Pic^0 E$ is a 4-torsion point whose square is not trivial.  We then have: 
 \begin{proposition} $\mathcal T$ is a rational curve. \end{proposition}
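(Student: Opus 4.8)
The plan is to recognize $\mathcal T$ as (an open part of) a classical modular curve and to make its rationality explicit by exhibiting a one-parameter normal form. First I would note that the datum $H = \mathcal O_E(3o)$ is canonically determined by the pointed curve $(E,o)$, so the forgetful assignment $(E,H,\tau) \mapsto (E,\tau)$ identifies $\mathcal T$ with the moduli of pairs $(E,\tau)$ where $\tau \in \Pic^0 E$ has exact order $4$ (the hypothesis $\tau^{\otimes 2} \neq \mathcal O_E$ precisely excludes the $2$-torsion). Under the canonical isomorphism $E \cong \Pic^0 E$, $x \mapsto \mathcal O_E(x-o)$, the element $\tau$ corresponds to a point $P \in E$ with $4P = o$ and $2P \neq o$ in the group law. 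Hence $\mathcal T$ is the moduli of pairs $(E,P)$ with $P$ of exact order $4$, that is, the modular curve $X_1(4)$, classically of genus $0$.

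To turn this into a self-contained rationality statement I would use the Tate normal form. Any pair $(E,P)$ with $P$ of order $\geq 4$ can be written, uniquely, as
$$
E(b,c): \quad y^2 + (1-c)xy - by = x^3 - bx^2, \qquad P = (0,0).
$$
A direct computation gives $2P = (b,bc)$ and $-2P = (b,-bc)$, so $P$ has order exactly $4$ (i.e.\ $2P$ is a nonzero $2$-torsion point) if and only if $c = 0$ and $b \neq 0$. This isolates the single one-parameter family
$$
E(b): \quad y^2 + xy - by = x^3 - bx^2, \qquad P = (0,0),
$$
carrying a tautological $4$-torsion point $P$, and therefore a morphism $\psi \colon \mathbb A^1_b \to \mathcal T$, $b \mapsto [E(b),\, \mathcal O_{E(b)}(3o),\, \mathcal O_{E(b)}(P-o)]$, defined wherever $E(b)$ is smooth.

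It then remains only to prove that $\psi$ is dominant, and for this I would compute the $j$-invariant $j(b)$ of $E(b)$ and observe that it is a non-constant rational function of $b$. Since the forgetful map $\mathcal T \to \mathcal M_1$ is finite, non-constancy of $j(b)$ forces the image of $\psi$ to be one-dimensional, so $\psi$ is dominant. As $\mathcal T$ is a curve over $\mathbb C$, L\"uroth's theorem then immediately gives that $\mathcal T$ is rational, which is the claim. (In fact the Tate model is unique given $(E,P)$, so $\psi$ is even birational onto its image, but dominance suffices.)

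The only real work — more bookkeeping than genuine difficulty — is the order-$4$ computation in the Tate form, which pins down $c = 0$, together with the verification that $j(b)$ is non-constant; both are elementary explicit calculations in characteristic zero. One could sidestep them entirely by simply invoking the classical fact that $X_1(4)$ has genus $0$, but I prefer the explicit normal form since it keeps the argument self-contained and matches the constructive style of the preceding sections.
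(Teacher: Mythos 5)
Your proof is correct, but it takes a genuinely different route from the paper's. You identify $\mathcal T$ with (an open part of) the modular curve $X_1(4)$ --- correctly noting that $H=\mathcal O_E(3o)$ carries no extra data and that $\tau^{\otimes 2}\not\cong\mathcal O_E$ says exactly that the point corresponding to $\tau$ has exact order $4$ --- and then parametrize by the Tate normal form, getting an explicit map $\mathbb A^1_b \to \mathcal T$. The paper instead stays inside the projective geometry of plane cubics, in keeping with the rest of the argument: given $(E,H,\tau)$ with $\tau=\mathcal O_E(t-o)$, the smooth conic $B$ with $B\cdot E = 4t+2o$ and the flex tangent $A$ at $o$ generate with $E$ a pencil $P$ of cubics through the length-$9$ scheme $Z=4t+5o$, unique up to projective equivalence; the tautological map $P\to\mathcal T$, $F\mapsto [F,\mathcal O_F(1),\mathcal O_F(t-o)]$, is surjective, so $\mathcal T$ is unirational and hence rational by L\"uroth. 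Your approach buys more: since the Tate coordinates are uniquely determined by the isomorphism class of $(E,P)$, and $(E,P)\cong(E,-P)$ via $[-1]$, the coordinate $b$ is genuinely a coordinate on the coarse space, so you get birationality directly and L\"uroth is not really needed; it also makes the cleaner dominance argument the one you relegate to parentheses --- surjectivity from existence and uniqueness of the Tate form --- rather than the $j(b)$-nonconstancy detour, which anyway presupposes irreducibility of $\mathcal T$. What the paper's construction buys is coherence with the ambient geometry, where $E$ is a plane cubic and $\tau$ is detected by the conic $B$. One computational slip, harmless to the conclusion: on $y^2+(1-c)xy-by=x^3-bx^2$ the negation is $-(x,y)=(x,\,-y-(1-c)x+b)$, so $-2P=(b,0)$, not $(b,-bc)$; the criterion survives, since $2P=(b,bc)$ is $2$-torsion iff $bc=0$, i.e.\ $c=0$ once $b\neq 0$, and you should also record smoothness, $\Delta=b^4(1+16b)\neq 0$.
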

\begin{proof} Observe that, on a smooth plane cubic $E$, a 4-th root of $ \mathcal O_E$ is a line bundle $\tau := \mathcal O_E(t-o)$ such that
$o, t \in E$ and moreover \\ (i) $3o \in \vert \mathcal O_E(1) \vert$, \\ (ii) $4t + 2o \in \vert \mathcal O_E(2) \vert$,   \\ Indeed these conditions are just
equivalent to say that $4t \sim 4o$. Notice also that they are fulfilled iff there exists a conic $B$ such that $B \cdot E = 4t + 2o$. Furthermore, it is easy to see that either
$\tau^{\otimes 2}$ is not trivial and $B$ is smooth or $B$ is a double line and $B \cdot E = 2(2t + o)$. Assuming the former case we consider
the plane cubic $A + B$, where $A$ is the flex tangent
to $E$ at $o$. Let $P$ be the pencil of cubics generated by $E$ and $A + B$, then its base locus is the 0-dimensional scheme 
$Z := 4t + 5o \subset E$. Note that $Z$, hence $P$, is unique up to projective equivalence. Let $F \in P$ be  smooth, then $F$ is endowed with the line bundles $\tau_F := \mathcal O_F(t - o)$ and $H_F := \mathcal O_F(1)$. Consider the rational map $m: P \to \mathcal T$ defined as follows: $m(F) = [F, H_F, \tau_F]$. The construction implies that $m$ is surjective. Hence $\mathcal T$ is rational.
\end{proof}
Now consider the moduli space $\mathcal A_1(3)$ of abelian curves endowed with a degree $3$ polarization. This is just the moduli space of pairs $(E,H)$. Therefore the curve $\mathcal T$ is a finite cover of $\mathcal A_1(3)$ via the forgetful map
$$ f: \mathcal T \to \mathcal A_1(3), $$ 
sending $[E, H, \tau]$ to $[E,H]$. Over suitable open sets we fix the universal family of abelian curves $\mathcal E \to \mathcal A_1(3)$ and a Poincar\'e sheaf
$\mathcal P$ on $\mathcal A_1(3) \times_{\mathcal M_1} \mathcal E$. Then the restriction of $\mathcal P$ to the curve $[E,H] \times E$ is the line bundle $H$. We  consider the map
$$
f \times id_{\mathcal E}: \mathcal T \times_{\mathcal M_1} \mathcal E \to \mathcal A_1(3) \times_{\mathcal M_1} \mathcal E
$$
and the pull-back $$ \tilde {\mathcal P} := (f \times id_{\mathcal E})^* \mathcal P$$ of $\mathcal P$ over the surface $$ \tilde {\mathcal E} := \mathcal T \times_{\mathcal M_1} \mathcal E. $$ The projection $u: \tilde {\mathcal E} \to \mathcal T$ is an elliptic fibration: its fibre at $[E, H, \tau]$ is the elliptic curve $E = [E,H, \tau] \times E$. Since $E$ is 1-pointed by $o$, the
map $u$ has two sections $$ s_0, s_1: \mathcal T \to \tilde {\mathcal E} $$
which are defined as follows. $s_0$ is the zero section sending $[E, H, \tau]$ to $o$. On the other hand we define $t := s_1([E,H, \tau])$ by the condition $\mathcal O_E(t-o) \cong \tau$. Let 
$$ D_0 := s_0(\mathcal T) \ , \ D_1 := s_1(\mathcal T). $$ Over a dense open set of $\mathcal T$ we can finally define the $\mathbf P^2$-bundles: \par
\begin{itemize} 
\item[$\circ$] $\mathbb T := \mathbf P (u_* \tilde {\mathcal P} \otimes \mathcal O_{\tilde {\mathcal E}}(D_1-D_0))$,
\item[$\circ$]$\mathbb P := \mathbf P (u_* \tilde {\mathcal P}^*)$
\end{itemize} \par
 The fibre of $\mathbb T$ at the point $[E, H, \tau]$ is the linear system $\vert H \otimes \tau \vert$, while the fibre of $\mathbb P$ at the same point is $\mathbf PH^0(H)^*$.
Now we consider the tautological embedding
$$
\tilde {\mathcal E} \subset \mathbb P.
$$
We note that  the embedding $\tilde {\mathcal E}_e \subset \mathbb P_e$ at $e := [E, H, t] \in \mathcal T$,  is  the embedding $E \subset  \mathbf PH^0(H)^*$ defined by $H$. Then we consider the incidence correspondence  
$$
\mathcal Z \subset \mathbb F := \mathbb T \times_{\mathcal T} \mathbb P
$$
parametrizing the points $[E, H, \tau; d, x]  \in \mathbb T \times_{\mathcal T} \mathbb P$ such that \begin{itemize} \item[$\circ$] $ x \in d \subset E \subset \mathbf PH^0(H)^*$, \item[$\circ$] $d \in \vert H \otimes \tau \vert$.
\end{itemize}  Let $\pi_1: \mathbb F \to \mathbb T$ and $\pi_2: \mathbb F \to \mathbb P$ be the projection maps, it is clear that
$$
\mathcal Z \subset \ \pi^*_1 \tilde {\mathcal E} \subset \mathbb F.
$$
Actually $\mathcal Z$ is a divisor in $\pi^*_1 \tilde {\mathcal E}$ and the latter, up to shrinking its base, 
is a smooth family of elliptic curves. Then $4\mathcal Z$ is a Cartier divisor in $\pi_1^* \tilde {\mathcal E}$ and a subscheme of $\mathbb F$. Let $\mathcal J$ be its ideal sheaf, from it we obtain a projective bundle
$$
\mathbb Q := \mathbf P \pi_{1*} (\mathcal J \otimes \pi^*_2 \mathcal O_{\mathbb P}(4)),
$$
over a dense open set of $\mathbb T$. Indeed let $p := [E, H, \tau, d]$ be a general point of $\mathbb T$ and let $\mathcal I_{4d}$ be the ideal sheaf of $4d$ in $ \mathbf P H^0(H)^*$. Then
$H^0(\mathcal I_{4d}(4))$ has constant dimension $4$ and $\mathbb Q$ is a $\mathbf P^3$-bundle over $\mathbb T$ by Grauert's theorem. Moreover $\mathbb Q$ is a $\mathbf P^3$-bundle over $\mathbb T$, which is a $\mathbf P^2$-bundle over the rational curve $\mathcal T$. Hence $\mathbb Q$ is rational. The conclusion is near: we are going to construct a birational map  
$$
m: \mathbb Q \to \mathcal S^{ 1 / 4+}_3. 
$$
Let us define $m$: a general point of $\mathbb Q$ is a general pair $(p,D)$, where $p \in \mathbb T$ is a point as above and $D \in \mathbb Q_p = \vert \mathcal I_{4d}(4) \vert$.
By definition $m(p)$ is the point $[D, \omega_D(-d)]$ of $\mathcal S^{1 / 4}_3$. We conclude that: \par
\begin{theorem} ${\mathcal S}^{1/4+}_3$ is rational. \end{theorem}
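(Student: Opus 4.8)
The plan is to prove that the moduli map $m: \mathbb Q \to \mathcal S^{1/4+}_3$ just constructed is birational. Since $\mathbb Q$ is a $\mathbf P^3$-bundle over the $\mathbf P^2$-bundle $\mathbb T$, which in turn sits over the rational curve $\mathcal T$ (Proposition 7.3), $\mathbb Q$ is a tower of projective bundles over a rational base and is therefore rational. Consequently, once $m$ is shown to be birational, the rationality of $\mathcal S^{1/4+}_3$ follows immediately. I would split the verification into two parts: dominance and generic injectivity of $m$.

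Dominance is essentially already contained in the discussion preceding the theorem. For a general triple $[E,H,\tau]$ and a general $d \in |H\otimes\tau|$, the linear system $|\mathcal I_{4d}(4)|$ produces a three–dimensional family of smooth pairs $(D,\eta_D)$ with $\eta_D = \omega_D(-d)$ and $\eta_D^{\otimes 2}$ an even theta characteristic, the curves $D$ being general in moduli. Since $\mathcal S^{1/4+}_3$ is irreducible and dominates $\mathcal M_3$, the image of $m$ is dense, so $m$ is dominant.

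For generic injectivity I would reconstruct the fibre of $m$ over a general point $[C,\eta]$. Passing to the plane quartic model $C\subset\mathbf P^2$, Lemma 7.1(1) gives the unique divisor $d = o_1+o_2+o_3 \in |\eta^{\otimes 3}|$, supported on three distinct points, and Lemma 7.1(2) gives the unique cubic $E$ with $C\cdot E = 4d$, which is smooth of general moduli. Then $H := \mathcal O_E(1)$ and the $4$-torsion point $\tau$ determined by $H$ and $d$ (namely $\tau = \epsilon^{-1}$ with $\epsilon = H(-d)$) are forced, and $\tau^{\otimes 2}$ is nontrivial by Lemma 7.2. Setting $D := C$ recovers a single candidate preimage $(p,D) = ([E,H,\tau,d],C) \in \mathbb Q$ with $m(p,D) = [C,\eta]$.

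The step I expect to be the main obstacle is verifying that this preimage is genuinely unique, i.e. that no hidden ambiguity enlarges the fibre. The delicate point is the marking of $E$: a point of $\mathcal T$ carries a base point $o$ with $H = \mathcal O_E(3o)$, and a priori $o$ could be taken to be any of the nine flexes of the plane cubic $E$. The key observation I would stress is that any two flexes differ by a $3$-torsion point, so $\mathcal O_E(3o)$ is the same line bundle for all of them, and translation by the $3$-torsion difference is an isomorphism of pointed curves carrying $\tau$ to itself, since translations act trivially on $\mathrm{Pic}^0 E$. Hence all flex markings determine the same point of $\mathcal T$, and the reconstruction above is unambiguous. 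Combined with dominance, this shows that $m$ is birational, and the rationality of $\mathbb Q$ then yields the rationality of $\mathcal S^{1/4+}_3$.
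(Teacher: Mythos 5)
Your setup matches the paper's: rationality of $\mathbb Q$ as a tower of projective bundles over the rational curve $\mathcal T$, dominance of $m$ via the family of smooth quartics in $\vert\mathcal I_{4d}(4)\vert$, and the reconstruction $[C,\eta]\mapsto (p,C)$ with $p=[E,H,\tau,d]$ using Lemmas 7.1 and 7.2 (your convention $\tau=\epsilon^{-1}$ is in fact the one compatible with $d\in\vert H\otimes\tau\vert$, a harmless deviation from the paper's $\tau:=H(-d)$). But there is a genuine gap: you have misidentified the delicate point. The flex-marking of $E$ is a non-issue --- a point of $\mathcal T$ is an isomorphism class of triples $(E,H,\tau)$, and $H=\mathcal O_E(1)$ is already determined by the plane embedding --- whereas the real obstacle is that exhibiting one preimage does not bound the degree of $m$. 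Since $\dim\mathbb Q=\dim\mathcal S^{1/4+}_3=6$, dominance only makes $m$ generically finite, and it could a priori have degree greater than one: two distinct quartics $D\neq C$ in the \emph{same} fibre $\mathbb Q_p=\vert\mathcal I_{4d}(4)\vert$ could define isomorphic $4$-spin curves. An isomorphism $[D,\omega_D(-d)]=[C,\eta]$ is induced, through the canonical embeddings, by a projectivity $\alpha$ of $\mathbf P^2$ with $\alpha(D)=C$; uniqueness of the divisor $d$ (from $h^0(\eta^{\otimes 3})=1$) and of the cubic $E$ forces $\alpha$ to stabilize the configuration $(E,d)$. Proving that this stabilizer is trivial for general data is the entire content of the paper's proof, and your proposal never touches it.

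The paper closes this as follows: since $E$ is general, a projectivity preserving the plane cubic restricts on $E$ to a translation or an inversion, hence acts on $\mathrm{Pic}^0 E$ as $+1$ or $-1$; the relation $\alpha^*\tau=\tau$ together with the nontriviality of $\tau^{\otimes 2}$ (Lemma 7.2 --- this is exactly where that hypothesis earns its keep) excludes the inversion; and a nontrivial translation, necessarily by a $3$-torsion point because it preserves $H$, is fixed-point-free, so it can preserve the divisor $d=o_1+o_2+o_3$ only if the $o_i$ form a single translation orbit, which fails for general $d$ (the paper leaves this last exclusion implicit). Note that your own remark cuts against you here: the $3$-torsion translations you invoke to dispose of the flex ambiguity preserve the whole triple $(E,H,\tau)$ precisely because they act trivially on $\mathrm{Pic}^0 E$, so they are exactly the dangerous candidate automorphisms; they are eliminated only by their action on the divisor $d$, a step absent from your argument. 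As written, your proof establishes that $m$ is dominant and that each general fibre contains a distinguished point, not that $m$ is birational.
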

\begin{proof} Both $\mathbb Q$ and $\mathcal S^{1 / 4+}_3$ are irreducible of the same dimension. Hence it is enough to show that $m$ is invertible.  Let $[C, \eta] \in \mathcal S^{1/4+}_3$, where $C \subset \mathbf P^2$ is a general smooth quartic and $\eta^{\otimes 3} \cong \mathcal O_C(d)$, as 
above. We know that there exists a unique cubic $E$ such that $E \cdot C = 4d$ and $E$ is general. This defines the point $p = [E, H, \tau, d] \in \mathbb T$, where 
$H := \mathcal O_E(1)$ and $\tau := H(-d)$. Moreover $C$ belongs to $\mathbb Q_p = \vert \mathcal I_{4d}(4) \vert$. Assume $m$ is not invertible at $[C, \eta]$. Then
there exists $D \in \mathbb Q_p$ such that $[D, \omega_D(-d)] = [C, \eta]$ and $D \neq C$. But then there exists a linear isomorphism $\alpha: \mathbf P^2 \to \mathbf P^2$
such that $\alpha(D) = C$ and $\alpha^*\mathcal O_C(d) = \mathcal O_D(d)$. Since $h^0(\eta^{\otimes 3}) = 1$, it follows $\alpha^*(4d) = 4d$ and $\alpha(E) = E$. Since
$E$ is general, $\alpha$ induces a translation or $\pm 1$ multiplication on $\Pic^0 E$. But we have $\alpha^* \tau = \tau$ and moreover $\tau^{\otimes 2}$ is not trivial. This
implies that $\alpha$ is the identity and $D = C$: a contradiction. 
   \end{proof}

\end{document}